\numberwithin{equation}{section}
\newtheorem{Theorem}{Theorem}[section]
\newtheorem*{Theorem*}{Theorem}
\newtheorem{Corollary}[Theorem]{Corollary}
\newtheorem{Lemma}[Theorem]{Lemma}
\newtheorem{Proposition}[Theorem]{Proposition}
 { \theoremstyle{definition}
\newtheorem{Definition}[Theorem]{Definition}

 }
\begin{document}


\renewcommand{\thefootnote}{}

\newcommand{\arXivNumber}{2401.14099}

\renewcommand{\PaperNumber}{086}

\FirstPageHeading

\ShortArticleName{On a Transformation of Triple $q$-Series and Rogers--Hecke Type Series}

\ArticleName{On a Transformation of Triple $\boldsymbol{q}$-Series\\ and Rogers--Hecke Type Series\footnote{This paper is a~contribution to the Special Issue on Basic Hypergeometric Series Associated with Root Systems and Applications in honor of Stephen C.~Milne's 75th birthday. The~full collection is available at \href{https://www.emis.de/journals/SIGMA/Milne.html}{https://www.emis.de/journals/SIGMA/Milne.html}}}

\Author{Zhi-Guo LIU}

\AuthorNameForHeading{Z.-G.~Liu}

\Address{School of Mathematical Sciences, Key Laboratory of MEA (Ministry of Education) $\&$ Shanghai\\ Key
Laboratory of PMMP, East China Normal University, Shanghai 200241, P.R.~China}
\Email{\href{mailto:zgliu@math.ecnu.edu.cn}{zgliu@math.ecnu.edu.cn}}
\URLaddress{\url{https://math.ecnu.edu.cn/facultydetail.html?uid=zgliu}}

\ArticleDates{Received January 26, 2024, in final form September 15, 2024; Published online October 04, 2024}

\Abstract{Using the method of the $q$-exponential differential operator, we give an extension of the Sears $_4\phi_3$ transformation formula. Based on this extended formula and a $q$-series expansion formula for an analytic function around the origin, we present a transformation formula for triple $q$-series, which includes several interesting special cases, especially a double $q$-series summation formula. Some applications of this transformation formula to Rogers--Hecke type series are discussed. More than 100 Rogers--Hecke type identities including Andrews' identities for the sums of three squares and the sums of three triangular numbers are obtained.}

\Keywords{$q$-partial differential equation; double $q$-series summation; triple $q$-hyper\-geo\-met\-ric series; $q$-exponential differential operator; Rogers--Hecke type series}

\Classification{05A30; 33D05; 33D15; 32A05; 11E25; 32A10}

\begin{flushright}
\emph{Dedicated to Stephen Milne on his 75th birthday}
\end{flushright}

\renewcommand{\thefootnote}{\arabic{footnote}}
\setcounter{footnote}{0}

\section{Introduction and preliminaries}

In this paper, we will prove a transformation formula for triple $q$-hypergeometric series, which includes several interesting special cases. One special case of this transformation allows us to derive several new identities for Rogers--Hecke type series. Rogers--Hecke type series play an important role in the study of Ramanujan's mock-theta functions.

As usual, we use $\mathbb{C}$ to denote the set of all complex numbers. Throughout the paper, we shall use the standard $q$-notations. Unless stated otherwise, it is assumed that $0<|q|<1$.
Let $n$ be an arbitrary nonnegative integer and $a \in \mathbb{C}$.
We define the $q$-shifted factorial by
\begin{equation*}
	(a;q)_n=\frac{(a; q)_\infty}{(aq^n; q)_\infty}, \qquad \text{where}\quad
	(a; q)_\infty=\prod_{k=0}^\infty \bigl(1-aq^k\bigr).
\end{equation*}
It is obvious that $(a; q)_0=1$, and by a simple calculation we find that for any positive integer~$n$,
\begin{gather*}
(a; q)_n=(1-a)(1-aq)\cdots \bigl(1-aq^{n-1}\bigr),\\
(a; q)_{-n}=\frac{1}{\bigl(aq^{-1}; q^{-1}\bigr)_n}=(-a)^{-n} q^{n(n+1)/2} (q/a; q)_n^{-1}.
\end{gather*}
For $n$ being an integer or infinity, the multiple
$q$-shifted factorial for $a_1, a_2, \ldots, a_m $ is defined by
\begin{equation*}
(a_1, a_2,\dots ,a_m;q)_n=(a_1;q)_n(a_2;q)_n \cdots (a_m;q)_n.
\end{equation*}

The basic hypergeometric series or the $q$-hypergeometric series ${_{r}\phi_s}(\cdot)$ are defined as
\begin{equation*}
_{r}\phi_s \biggl({{a_1, \ldots, a_{r}} \atop {b_1, \ldots,
b_s}} ; q, z \biggr) =\sum_{n=0}^\infty
\frac{(a_1, \ldots, a_r; q)_n}{(q, b_1, \ldots, b_s; q)_n}
\bigl((-1)^n q^{n(n-1)/2}\bigr)^{1+s-r}z^n.
\end{equation*}

One of the most important results in the theory of $q$-hypergeometric series is the $q$-binomial theorem, which is stated in the following theorem \cite[p.~488]{AAR1999}.

\begin{Theorem}[$q$-binomial theorem] 
For $|q|<1$ and $|bx|<1$, we have
	\begin{equation*}
		\frac{(ax; q)_\infty}{(bx; q)_\infty}
		=\sum_{n=0}^\infty \frac{b^n (a/b; q)_n}{(q; q)_n} x^n.
	\end{equation*}
\end{Theorem}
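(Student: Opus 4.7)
The plan is the classical functional-equation argument. Set
\[
f(x) = \frac{(ax;q)_\infty}{(bx;q)_\infty},
\]
which, by virtue of the product defining $(bx;q)_\infty$, is analytic in the disk $|x| < 1/|b|$ (its only possible singularities are simple poles at $x = q^{-k}/b$, all of modulus $\ge 1/|b|$). So $f$ admits a power series expansion $f(x) = \sum_{n \ge 0} c_n x^n$ valid on $|bx| < 1$, and it suffices to identify the coefficients $c_n$ explicitly.

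The first key step is to extract a functional equation. Using $(ax;q)_\infty = (1-ax)(aqx;q)_\infty$ and the analogous identity for $(bx;q)_\infty$, I would rewrite $f(qx)$ and obtain
\[
(1 - ax)\, f(qx) = (1 - bx)\, f(x).
\]
Substituting the power series on both sides and comparing the coefficient of $x^n$ yields the two-term recursion
\[
c_n (1 - q^n) = c_{n-1}\bigl(b - a q^{n-1}\bigr),
\qquad n \ge 1,
\]
together with the initial value $c_0 = f(0) = 1$.

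The second step is to solve this recursion. Iterating,
\[
c_n
= \prod_{k=1}^{n} \frac{b - a q^{k-1}}{1 - q^k}
= b^n \prod_{k=1}^{n} \frac{1 - (a/b)q^{k-1}}{1 - q^k}
= \frac{b^n (a/b;q)_n}{(q;q)_n},
\]
which is exactly the claimed coefficient. The resulting series converges on $|bx|<1$ since $(a/b;q)_n$ is bounded in $n$ (as $|q|<1$) while $1/(q;q)_n$ tends to the finite limit $1/(q;q)_\infty$, giving a radius of convergence $1/|b|$ in agreement with the domain of analyticity of $f$.

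The only subtle point is that the recursion determines the series uniquely only once we know $f$ is truly given by a convergent power series on $|x| < 1/|b|$; I would therefore state the analyticity claim at the outset so that coefficient matching is legitimate. Beyond that, the proof is routine: no convergence issue beyond $|bx|<1$ arises, and all manipulations with the infinite products are valid by absolute convergence under $|q|<1$.
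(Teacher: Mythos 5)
Your proof is correct and complete: the functional equation $(1-ax)f(qx)=(1-bx)f(x)$ is derived correctly, the coefficient recursion $c_n(1-q^n)=c_{n-1}\bigl(b-aq^{n-1}\bigr)$ with $c_0=1$ follows, and its solution gives exactly the stated coefficients; the analyticity of $f$ on $|x|<1/|b|$ (poles only at $x=q^{-k}/b$) legitimizes the coefficient comparison, which is the one subtle point and you address it. Note that the paper itself offers no proof of this theorem --- it is quoted as a classical result from Andrews--Askey--Roy --- and your argument is precisely the standard functional-equation proof found there, so there is nothing to reconcile with the paper's treatment. A minor cosmetic remark: for $b=0$ the intermediate step of factoring out $b^n$ is formal, but the product $\prod_{k=1}^{n}\bigl(b-aq^{k-1}\bigr)/\bigl(1-q^k\bigr)$ itself remains valid and recovers Euler's expansion of $(ax;q)_\infty$, consistent with how the paper specializes the theorem.
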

Setting $a=0$ and $b=0$ in the above equation respectively, we arrive at the following two formulas due to Euler:
\begin{align*}
	&\frac{1}{(bx; q)_\infty}=\sum_{n=0}^\infty \frac{(bx)^n}{(q; q)_n},\qquad |bx|<1,
\end{align*}
and
\begin{align*}
	&(ax; q)_\infty=\sum_{n=0}^\infty \frac{(-ax)^n q^{n(n-1)/2}}{(q; q)_n}, \qquad |ax|<\infty. 
\end{align*}

By a simple calculation, it is found that the $q$-Pfaff--Saalsch\"{u}tz formula \cite[equation~(1.7.2)]{Gas+Rah} can be rewritten as
\begin{equation}\label{qpfaff}
{_3\phi_2}\biggl({{q^{-n}, \alpha q^n, \alpha ab/q}\atop{\alpha a, \alpha b}}; q, q\biggr)=\frac{(q/a, q/b; q)_n}{(\alpha a, \alpha b; q)_n}
\biggl(\frac{\alpha ab}{q}\biggr)^n.
\end{equation}
Setting $b=0$ in the equation above, we arrive at the $q$-Chu--Vandermonde summation \cite[equa\-tion~(1.5.3)]{Gas+Rah}
\begin{equation}\label{qchu-van}
	{_2\phi_1}\biggl({{q^{-n}, \alpha q^n}\atop{\alpha a}}; q, q\biggr)=\frac{(q/a; q)_n (-\alpha a)^n}{(\alpha a; q)_n} q^{n(n-1)/2}.	
\end{equation}

The Rogers $_6\phi_5$ summation or equivalently, the non-terminating $_6\phi_5$ summation \cite[p.~44]{Gas+Rah}~is also one of the central theorems in $q$-series, which is stated in the following proposition.
\begin{Proposition} \label{rogersthm} For $\bigl|\alpha abc/q^2\bigr|<1,$ we have
	\begin{align*}
		{_6 \phi_5} \biggl({{\alpha, q\sqrt{\alpha}, -q\sqrt{\alpha}, q/a, q/b, q/c}
			\atop{\sqrt{\alpha}, -\sqrt{\alpha},\alpha a, \alpha b, \alpha c}}; q, \frac{\alpha abc}{q^2}\biggr)
		=\frac{(\alpha q, \alpha ab/q, \alpha ac/q, \alpha bc/q; q)_\infty}
		{\bigl(\alpha a, \alpha b, \alpha c, \alpha abc/q^2; q\bigr)_\infty}.
	\end{align*}
\end{Proposition}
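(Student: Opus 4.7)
The plan is to establish the identity in its terminating form first and then extend to the general statement by analytic continuation.

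Specialize $c=q^{N+1}$ with $N\in\mathbb{Z}_{\geq 0}$, so that $q/c=q^{-N}$ and the series on the left terminates after the $N$-th term. Telescoping the infinite products on the right via $(x;q)_\infty/(xq^N;q)_\infty=(x;q)_N$, the proposition collapses to the terminating identity
\[
{_6\phi_5}\biggl({{\alpha, q\sqrt{\alpha}, -q\sqrt{\alpha}, q/a, q/b, q^{-N}}\atop{\sqrt{\alpha}, -\sqrt{\alpha}, \alpha a, \alpha b, \alpha q^{N+1}}}; q, \alpha ab q^{N-1}\biggr)=\frac{(\alpha q,\alpha ab/q;q)_N}{(\alpha a,\alpha b;q)_N}.
\]
I would prove this by induction on $N$. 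The base case $N=0$ is immediate, since both sides equal $1$. For the inductive step, the characteristic very-well-poised cancellation absorbs $(q\sqrt{\alpha},-q\sqrt{\alpha};q)_n/(\sqrt{\alpha},-\sqrt{\alpha};q)_n$ into the factor $(1-\alpha q^{2n})/(1-\alpha)$, and the ratio of successive left-hand sides (as $N$ increases by one) then reduces to a balanced, Saalschützian $_3\phi_2$. The $q$-Pfaff--Saalschütz formula~\eqref{qpfaff} evaluates this $_3\phi_2$ in closed form, and matching it against the corresponding product ratio on the right closes the induction.

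For the general case, fix $\alpha$, $a$, $b$ generic and regard both sides of the claimed identity as functions of $c$. The series on the left converges absolutely and uniformly on compact subsets of $\bigl\{|c|<q^2/|\alpha ab|\bigr\}$, while the infinite product on the right is holomorphic and nonvanishing on a neighborhood of $c=0$. Hence both sides define holomorphic functions of $c$ on a small disk about the origin, and the terminating step guarantees that they agree along the sequence $c=q^{N+1}$, $N\geq 0$, which accumulates at $c=0$. The identity theorem for holomorphic functions then delivers equality throughout the common domain of analyticity, which is precisely the region $|\alpha abc/q^2|<1$ specified in the statement.

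The main obstacle is the terminating case. Since the $_6\phi_5$ is very-well-poised while~\eqref{qpfaff} is Saalschützian, a direct one-step reduction is not available; the induction must be set up so that the increment $N\mapsto N+1$, after the well-poised cancellation, produces exactly a Saalschützian $_3\phi_2$ to which~\eqref{qpfaff} can be applied. An alternative route I would keep in reserve is to invoke a Sears-type $_4\phi_3$ transformation (the same family of identities the paper subsequently extends) to convert the terminating $_6\phi_5$ in a single stroke into a $_3\phi_2$ summable by~\eqref{qpfaff}.
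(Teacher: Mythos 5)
Your overall strategy --- evaluate the terminating case $c=q^{N+1}$ first, then pass to the general statement by an Ismail-type continuation in $c$, using that the points $q^{N+1}$ accumulate at $c=0$ --- is a legitimate classical route and genuinely different from the paper's. The paper obtains the nonterminating identity in one stroke: it inserts $f(x)=(bx/q,cx/q;q)_\infty/\bigl(x,bcx/q^2;q\bigr)_\infty$ into the expansion formula \eqref{newliuexpthm} of Theorem~\ref{liuthm} and sums the resulting inner terminating balanced ${}_3\phi_2$ by \eqref{qpfaff}, so no limiting or continuation argument is needed; your route instead trades the paper's machinery for classical tools. Your reduction to the terminating identity is computed correctly (with $c=q^{N+1}$ the product side does telescope to $(\alpha q,\alpha ab/q;q)_N/(\alpha a,\alpha b;q)_N$), and the continuation step is sound as sketched: for fixed generic $\alpha,a,b$ both sides are analytic in $c$ in a disk about the origin (the radius is $|q|^2/|\alpha ab|$, and one must stay away from the poles $c=q^{-j}/\alpha$, which is automatic near $c=0$), they agree on the sequence $c=q^{N+1}\to 0$, and the identity theorem finishes.

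The genuine gap is in the terminating evaluation, which is the heart of the matter. Your induction step is asserted rather than proved: the claim that ``the ratio of successive left-hand sides reduces to a balanced, Saalsch\"utzian ${}_3\phi_2$'' has no evident content, because $N$ enters the sum in three places (the numerator parameter $q^{-N}$, the denominator parameter $\alpha q^{N+1}$, and the factor $q^{Nn}$ in the argument), so neither the ratio nor the difference of the sums for $N$ and $N+1$ collapses termwise into a single ${}_3\phi_2$; what the induction actually requires is a contiguous-relation or telescoping certificate for the first-order recurrence $S_{N+1}=\frac{(1-\alpha q^{N+1})(1-\alpha abq^{N-1})}{(1-\alpha aq^{N})(1-\alpha bq^{N})}\,S_N$, and you have not exhibited one. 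Your reserve route, by contrast, does close the gap, though the correct tool is Watson's $q$-analogue of Whipple's theorem (Theorem~\ref{WatsonWhipplethm}) rather than Sears' ${}_4\phi_3$ transformation proper: taking $cd=q/\alpha$ there makes $\alpha cd/q=1$, so the balanced ${}_4\phi_3$ collapses to $1$, while the pairs $q/c,\,\alpha d$ and $q/d,\,\alpha c$ cancel in the ${}_8\phi_7$, which degenerates exactly to your terminating ${}_6\phi_5$ with value $(\alpha q,\alpha ab/q;q)_N/(\alpha a,\alpha b;q)_N$. Since Watson's transformation has classical proofs resting only on \eqref{qpfaff} (the paper itself derives it from Theorem~\ref{liuthm} and \eqref{qpfaff}, independently of Proposition~\ref{rogersthm}), there is no circularity; but as written, your primary argument leaves the key evaluation unproved, and the proposal stands or falls with that fallback.
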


Using the method of the $q$-exponential differential operator,
the following extension of the Rogers $_6\phi_5$ summation formula is given by us in \cite[Theorem~3]{Liu2011}, see also \cite[Theorem~6.2]{LiuRam2013}.
\begin{Theorem}\label{rogersliuthm}
	For $\max\bigl\{\bigl|\alpha \beta abc/q^2\bigr|, \bigl|\alpha \gamma abc/q^2\bigr|\bigr\}<1$, we have
	\begin{align*}
		&\sum_{n=0}^\infty \frac{\bigl(1-\alpha q^{2n}\bigr)(\alpha, q/a, q/b, q/c; q)_n}{(1-\alpha)(q, \alpha a, \alpha b, \alpha c; q)_n}
		\biggl(\frac{\alpha abc}{q^2}\biggr)^n {_4 \phi_3} \biggl({{q^{-n}, \alpha q^n, \beta, \gamma}
			\atop{q/a, q/b,\alpha \beta \gamma ab/q}}; q, q\biggr)\\
		&\qquad{}=\frac{\bigl(q\alpha, \alpha ac/q, \alpha bc/q, \alpha \beta ab/q, \alpha \gamma ab/q, \alpha \beta \gamma abc/q^2; q\bigr)_\infty}
		{\bigl(\alpha a, \alpha b, \alpha c, \alpha \beta abc/q^2, \alpha \gamma abc/q^2, \alpha \beta \gamma ab/q; q\bigr)_\infty}.
	\end{align*}
\end{Theorem}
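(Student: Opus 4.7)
The plan is to derive Theorem~\ref{rogersliuthm} from the Rogers $_6\phi_5$ summation (Proposition~\ref{rogersthm}) by the method of the $q$-exponential differential operator alluded to in the abstract. The structural hint comes from dividing the two right-hand sides: the target Pochhammer ratio differs from that of Proposition~\ref{rogersthm} exactly by the replacement
\begin{equation*}
\frac{(\alpha ab/q;q)_\infty}{(\alpha abc/q^2;q)_\infty}\ \longmapsto\ \frac{(\alpha\beta ab/q,\alpha\gamma ab/q,\alpha\beta\gamma abc/q^2;q)_\infty}{(\alpha\beta abc/q^2,\alpha\gamma abc/q^2,\alpha\beta\gamma ab/q;q)_\infty},
\end{equation*}
which is precisely the shape produced by a two-parameter $q$-exponential operator, evaluated on the right via two applications of the $q$-binomial theorem.

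Concretely, I would first set up a $q$-difference operator $\mathcal{T}(\beta,\gamma;D_q)$ acting with respect to the $c$-variable that realises the displayed replacement when applied to $(\alpha ab/q;q)_\infty/(\alpha abc/q^2;q)_\infty$. Applied to the RHS of Proposition~\ref{rogersthm} this immediately yields the RHS of Theorem~\ref{rogersliuthm}. Applied to the LHS term by term, the $q$-Leibniz rule converts the $n$-th summand of the $_6\phi_5$ into a finite inner sum over a new index $k$, and direct matching shows this inner sum is exactly the balanced terminating $_4\phi_3\bigl(q^{-n},\alpha q^n,\beta,\gamma;\,q/a,q/b,\alpha\beta\gamma ab/q;\,q,q\bigr)$ in the statement. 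The Saalsch\"{u}tz balance $(q/a)(q/b)(\alpha\beta\gamma ab/q)=q\cdot q^{-n}(\alpha q^n)\beta\gamma$ is automatic, which is precisely why a Sears-type $_4\phi_3$ appears here rather than a generic one. The hypothesis $\max\{|\alpha\beta abc/q^2|,|\alpha\gamma abc/q^2|\}<1$ ensures absolute convergence of both sides and legitimises the interchange of summation implicit in the operator argument.

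The main technical obstacle is formulating $\mathcal{T}(\beta,\gamma;D_q)$ cleanly and doing the $q$-Leibniz bookkeeping so that the resulting inner sum matches the claimed $_4\phi_3$ exactly, rather than some equivalent form one would reach only after a further Sears transformation. The simplest organisation is to prove an abstract operator identity --- ``$\mathcal{T}(\beta,\gamma;D_q)$ applied to a suitable Pochhammer ratio equals the target ratio'' --- once and for all, and then to deduce Theorem~\ref{rogersliuthm} by instantiating that identity at the Rogers $_6\phi_5$ formula; in this way the double sum on the left of Theorem~\ref{rogersliuthm} is never rearranged by hand.
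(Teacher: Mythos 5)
Your overall instinct---introduce $\beta,\gamma$ into the Rogers ${}_6\phi_5$ by a $q$-exponential operator---is in the spirit of the method the paper credits for this result, but your specific plan has a genuine gap: the operator you posit cannot act \emph{in the $c$-variable} in the way you use it. The right-hand side of Proposition~\ref{rogersthm} contains, besides the ratio $(\alpha ab/q;q)_\infty/\bigl(\alpha abc/q^2;q\bigr)_\infty$ that you single out, the further $c$-dependent factors $(\alpha ac/q,\alpha bc/q;q)_\infty/(\alpha c;q)_\infty$. A $q$-difference operator in $c$ is linear, not multiplicative: you cannot apply it to one factor of a product and leave the remaining $c$-dependent factors untouched, so knowing $\mathcal{T}\bigl\{(\alpha ab/q;q)_\infty/\bigl(\alpha abc/q^2;q\bigr)_\infty\bigr\}$ determines nothing about $\mathcal{T}$ applied to the full right-hand side, and the step ``applied to the RHS this immediately yields the RHS of Theorem~\ref{rogersliuthm}'' is unjustified. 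The same difficulty appears on the left: the $c$-dependence of the $n$-th summand is $(q/c;q)_n c^n/(\alpha c;q)_n$, and in Theorem~\ref{rogersliuthm} this factor survives unchanged while a $c$-free balanced ${}_4\phi_3$ appears, so your $\mathcal{T}$ would have to admit every summand as an eigenfunction with eigenvalue that ${}_4\phi_3$. Demanding an operator with these simultaneous properties is essentially a restatement of the theorem, so the argument is circular unless $\mathcal{T}$ is exhibited explicitly and both evaluations are proved independently; moreover, the known operator identities (for instance Proposition~\ref{liuopeppb}) show that exponential operators applied to such products generate extra ${}_3\phi_2$ correction factors rather than the clean six-factor ratio you need, so ``two applications of the $q$-binomial theorem'' will not deliver it.

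For comparison, the paper never operates in $c$: Theorem~\ref{rogersliuthm} is quoted from earlier work, and within this paper it is the $u=1$ case of Corollary~\ref{liuthmb} (equivalently the $u=\alpha ab/q$ case of Corollary~\ref{liuthmc}), both consequences of Theorem~\ref{liuthma}. There the parameter $c$ enters only through the expansion point in the $q$-expansion formula of Theorem~\ref{liuthm}: taking $f(x)=\bigl(ax/q,bx/q,\beta\gamma abx/q^2;q\bigr)_\infty/\bigl(x,\beta abx/q^2,\gamma abx/q^2;q\bigr)_\infty$ and evaluating $f(\alpha c)$ and $f\bigl(\alpha q^{k+1}\bigr)$ yields the companion identity of Theorem~\ref{prodliuthm}, whose inner sum is ${}_4\phi_3\bigl(q^{-n},\alpha q^n,\alpha\beta ab/q,\alpha\gamma ab/q;\,\alpha a,\alpha b,\alpha\beta\gamma ab/q;\,q,q\bigr)$; a single application of the Sears transformation (Proposition~\ref{Searstrs}) then converts this inner sum into the ${}_4\phi_3$ of the statement. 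The $q$-exponential operator is used only in the auxiliary parameter $\beta$, as $T(\gamma\partial_{q,\beta})$ in the proof of Theorem~\ref{ExtSearstrs}, never in $c$; if you wish to retain an operator-style argument, that is the variable in which it can be made to work, and you should expect a Sears-type transformation, not mere binomial summations, to be needed to reach the exact ${}_4\phi_3$ in the statement.
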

 This summation formula allows us to give a beautiful generating function for the Askey--Wilson polynomials, which leads us
 to derive a twelve-parameter $q$-beta integral in \cite{Liu2019} and give a new proof of the orthogonality of the Askey--Wilson polynomials in \cite{Liu2021}.

It is generally known that it is a difficult task to obtain triple $q$-series transformation formulas, and interesting transformation formulas of triple $q$-series are rare in the mathematical literature.

In this paper, we will further extend the double $q$-summation formula in Theorem~\ref{rogersliuthm} to
the following transformation formula for triple $q$-hypergeometric series with the help of an extension of the Sears $_4\phi_3$ transformation.
\begin{Theorem}\label{liuthma} We have
	\begin{align*}
		&\sum_{n=0}^\infty \frac{\bigl(1-\alpha q^{2n}\bigr)(\alpha, q/a, q/b, q/c; q)_n}{(1-\alpha)(q, \alpha a, \alpha b, \alpha c; q)_n}
		\biggl(\frac{\alpha abc}{q^2}\biggr)^n \\
		&\qquad\quad{} \times \sum_{k=0}^{n} \frac{(q^{-n}, \alpha q^n, \beta, \gamma; q)_k q^k}
		{(q, q/a, q/b, \beta \gamma uv; q)_k}
		{_3\phi_2 \biggl({{q^{-k}, 1/u, 1/v}\atop{\beta, \gamma}}; q, \beta \gamma uv q^k\biggr)}\\
		&\qquad{}=\frac{(q\alpha, \alpha ac/q, \alpha bc/q, \alpha \beta ab/q, \alpha \gamma ab/q, \beta \gamma cuv/q; q )_\infty}{\bigl(\alpha a, \alpha b, \alpha c, \alpha \beta abc/q^2, \alpha \gamma abc/q^2, \beta \gamma uv; q \bigr)_\infty}\\
		&\qquad\quad{} \times{_3\phi_2} \biggl({{q/c, \alpha ab/qu, \alpha ab/qv}\atop{\alpha \beta ab/q, \alpha \gamma ab/q}}; q,
		\beta \gamma cuv/q \biggr),
	\end{align*}
	provided the series on both sides of the above equation converge absolutely.
\end{Theorem}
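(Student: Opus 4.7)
The overall strategy combines the two ingredients the author advertises in the abstract: an extension of the Sears $_4\phi_3$ transformation, and Theorem~\ref{rogersliuthm} (Liu's extension of Rogers' $_6\phi_5$ summation). Comparing the LHS of Theorem~\ref{liuthma} with the LHS of Theorem~\ref{rogersliuthm}, one sees that the outer shape is identical: the Rogers-type factors and the $(\alpha abc/q^2)^n$ weight agree term by term, and everything genuinely new in Theorem~\ref{liuthma} is buried inside the inner double $k$-sum. So my plan is to evaluate that inner sum in closed form and then invoke Theorem~\ref{rogersliuthm} to dispose of the outer $n$-summation.

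\emph{Step 1: the extension of Sears.} Using the $q$-exponential differential operator method of Liu's earlier papers, I would act on Sears' classical $_4\phi_3$ transformation by an operator in two free variables $u,v$ to produce a two-parameter deformation that evaluates the inner sum
\[
\sum_{k=0}^{n}\frac{(q^{-n},\alpha q^n,\beta,\gamma;q)_k\,q^k}{(q,q/a,q/b,\beta\gamma uv;q)_k}\,{_3\phi_2}\biggl({{q^{-k},1/u,1/v}\atop{\beta,\gamma}};q,\beta\gamma uv q^k\biggr)
\]
as an explicit prefactor in $u,v$ times a single ${_4\phi_3}\bigl({{q^{-n},\alpha q^n,\beta,\gamma}\atop{q/a,q/b,\alpha\beta\gamma ab/q}};q,q\bigr)$, which is precisely the inner $_4\phi_3$ appearing in Theorem~\ref{rogersliuthm}. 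The classical Sears transformation must drop out from a suitable limit in $u,v$, which is the principal guide for writing down the correct deformation.

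\emph{Step 2: apply Theorem~\ref{rogersliuthm}.} Substituting the closed form from Step~1 into the LHS of Theorem~\ref{liuthma} and interchanging the outer $n$-sum with the $u,v$-residual sum (legitimate under the absolute convergence hypothesis in the statement), the $n$-summation is exactly the LHS of Theorem~\ref{rogersliuthm}, which that theorem evaluates to an infinite product. Reassembling the leftover $u,v$-dependent factors --- or equivalently applying the $q$-series expansion formula for an analytic function around the origin alluded to in the abstract --- produces the infinite product on the RHS of Theorem~\ref{liuthma} together with the terminal ${_3\phi_2}\bigl({{q/c,\alpha ab/qu,\alpha ab/qv}\atop{\alpha\beta ab/q,\alpha\gamma ab/q}};q,\beta\gamma cuv/q\bigr)$.

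\emph{Main obstacle.} The crux is Step~1: identifying the right two-parameter extension of Sears' transformation whose evaluation matches the precise shape of the inner $k$-sum. A practical route is to check that a candidate formula satisfies the correct $q$-difference equations in $u$ and $v$ and specialises to the classical Sears transformation at $u=v=0$, so that the verification reduces to a finite algebraic identity. Absolute convergence, which is built into the hypotheses, handles every interchange of summation needed in Step~2, so the only essential work is algebraic bookkeeping inside Step~1.
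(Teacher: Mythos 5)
Your Step 1 is where the argument breaks down. You claim that the inner double sum $\sum_{k=0}^{n} \frac{(q^{-n},\alpha q^n,\beta,\gamma;q)_k q^k}{(q,q/a,q/b,\beta\gamma uv;q)_k}\,{_3\phi_2}\bigl({{q^{-k},1/u,1/v}\atop{\beta,\gamma}};q,\beta\gamma uvq^k\bigr)$ can be evaluated as an explicit prefactor in $u,v$ times the single ${_4\phi_3}\bigl({{q^{-n},\alpha q^n,\beta,\gamma}\atop{q/a,q/b,\alpha\beta\gamma ab/q}};q,q\bigr)$ from Theorem~\ref{rogersliuthm}. No such factorization exists. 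Test it at $u=1$: the ${_3\phi_2}$ collapses to $1$ and the inner sum becomes ${_4\phi_3}\bigl({{q^{-n},\alpha q^n,\beta,\gamma}\atop{q/a,q/b,\beta\gamma v}};q,q\bigr)$, which for generic $v$ is not a constant multiple of the ${_4\phi_3}$ with bottom parameter $\alpha\beta\gamma ab/q$. Alternatively, note that your Step 2 needs the prefactor to be independent of $n$ (otherwise the outer sum is not \emph{exactly} the left side of Theorem~\ref{rogersliuthm}); but then the right-hand side of Theorem~\ref{liuthma} would have to equal that prefactor times the product side of Theorem~\ref{rogersliuthm}, and the ratio of the two right-hand sides involves $(\beta\gamma cuv/q;q)_\infty$ and a ${_3\phi_2}$ with upper parameter $q/c$ and argument $\beta\gamma cuv/q$ --- it depends on $c$, while the inner $k$-sum contains no $c$ at all. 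The $u,v$-dependence is genuinely entangled with the outer variable through $c$, so there is nothing for Theorem~\ref{rogersliuthm} to sum, and the "reassembling" in Step 2 has no mechanism to produce the terminal ${_3\phi_2}$.

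For comparison, the paper's extension of Sears (Theorem~\ref{ExtSearstrs}) is a transformation between two deformed terminating double sums (the $A_k$-type and $B_k$-type), not a closed-form evaluation, and Theorem~\ref{rogersliuthm} is never used in the proof of Theorem~\ref{liuthma} (it is a corollary of it, via $u=1$, $qv=\alpha ab$). The actual proof applies the expansion formula of Theorem~\ref{liuthm} (with $a$ renamed $c$) to the analytic function $f(x)=\frac{(ax/q,\,bx/q,\,\beta\gamma uvx/q\alpha;q)_\infty}{(x,\,\beta abx/q^2,\,\gamma abx/q^2;q)_\infty}\,{_3\phi_2}\bigl({{q\alpha/x,\,\alpha ab/qu,\,\alpha ab/qv}\atop{\alpha\beta ab/q,\,\alpha\gamma ab/q}};q,\,\beta\gamma uvx/q\alpha\bigr)$: its value at $x=\alpha c$ is exactly the right-hand side (product times ${_3\phi_2}$), its values at $x=\alpha q^{k+1}$ generate the $A_k$ series, and Theorem~\ref{ExtSearstrs} then converts the resulting inner sums into the $B_k$ form appearing on the left of Theorem~\ref{liuthma}. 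Your instinct to build the two-parameter deformation of Sears with the $q$-exponential operator is the right tool for proving Theorem~\ref{ExtSearstrs} itself, but it cannot collapse the triple series on its own, and substituting Theorem~\ref{rogersliuthm} for the expansion-formula step does not work.
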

 The Kronecker delta function $\delta_{mn}$ is defined as $\delta_{mn}=1$ when $m=n$ and $\delta_{mn}=0$ when~${m\not=n}$.

Setting $qv=\alpha ab$ in Theorem~\ref{liuthma}, upon noting that $(1; q)_k=\delta_{0k},$ the $_3\phi_2$ series on the right-hand side in the theorem reduces to $1$, and thus we obtain the following corollary.
\begin{Corollary}\label{liuthmb} We have
	\begin{align*}
		&\sum_{n=0}^\infty \frac{\bigl(1-\alpha q^{2n}\bigr)(\alpha, q/a, q/b, q/c; q)_n}{(1-\alpha)(q, \alpha a, \alpha b, \alpha c; q)_n}
		\biggl(\frac{\alpha abc}{q^2}\biggr)^n \\
		&\qquad\quad{} \times \sum_{k=0}^{n} \frac{(q^{-n}, \alpha q^n, \beta, \gamma; q)_k q^k}
		{(q, q/a, q/b, \alpha \beta \gamma ab u/q; q)_k}
		{_3\phi_2 \biggl({{q^{-k}, 1/u, q/\alpha ab}\atop{\beta, \gamma}}; q, \alpha \beta \gamma ab u q^{k-1}\biggr)}\\
		&\qquad{}=\frac{\bigl(q\alpha, \alpha ac/q, \alpha bc/q, \alpha \beta ab/q, \alpha \gamma ab/q, \alpha \beta \gamma abcu/q^2; q \bigr)_\infty}{\bigl(\alpha a, \alpha b, \alpha c, \alpha \beta abc/q^2, \alpha \gamma abc/q^2, \alpha \beta \gamma abu/q; q\bigr)_\infty}.
\end{align*}
\end{Corollary}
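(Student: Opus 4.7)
The entire proof is an immediate specialization of Theorem~\ref{liuthma} at $v=\alpha ab/q$, equivalently $qv=\alpha ab$, as hinted in the sentence preceding the statement. The plan is to substitute this value throughout the identity of Theorem~\ref{liuthma} and simplify both sides.

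First I would handle the right-hand side, where the collapse is cleanest. The numerator parameter $\alpha ab/(qv)$ inside the ${}_3\phi_2$ of Theorem~\ref{liuthma} becomes $1$, so the Pochhammer symbol $(1;q)_k=\delta_{0k}$ kills every term of that ${}_3\phi_2$ except $k=0$, at which it trivially evaluates to $1$. The two remaining $v$-dependent infinite products simplify via $(\beta\gamma cuv/q;q)_\infty=(\alpha\beta\gamma abcu/q^2;q)_\infty$ and $(\beta\gamma uv;q)_\infty=(\alpha\beta\gamma abu/q;q)_\infty$, producing exactly the prefactor written on the right side of the corollary.

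Next I would substitute into the left-hand side and track every occurrence of $v$: the denominator factor $(\beta\gamma uv;q)_k$ becomes $(\alpha\beta\gamma abu/q;q)_k$, the parameter $1/v$ in the inner ${}_3\phi_2$ becomes $q/(\alpha ab)$, and the argument $\beta\gamma uv\, q^k$ of that ${}_3\phi_2$ becomes $\alpha\beta\gamma abu\, q^{k-1}$. These are exactly the ingredients appearing on the left side of the corollary, so the two sides match term by term.

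There is essentially no obstacle here; the only point to check is that the absolute-convergence hypothesis of Theorem~\ref{liuthma} is preserved under the specialization $v=\alpha ab/q$, but this is automatic for generic values of $\alpha,a,b,c,\beta,\gamma,u$. Thus the corollary follows at once from Theorem~\ref{liuthma} by this one substitution, with no further $q$-series machinery required.
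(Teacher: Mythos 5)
Your proposal is correct and is exactly the paper's derivation: the corollary is obtained from Theorem~\ref{liuthma} by the single specialization $qv=\alpha ab$, with the right-hand ${}_3\phi_2$ collapsing to $1$ because $(1;q)_k=\delta_{0k}$, and the remaining $v$-dependent factors simplifying precisely as you list. Nothing further is needed.
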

When $u=1, $ the ${_3}\phi{_2}$ series in the above equation reduces to $1$ and Corollary~\ref{liuthmb} becomes
Theorem~\ref{rogersliuthm}. Hence, Theorem~\ref{liuthma} is really an extension of Theorem~\ref{rogersliuthm}.

Setting $v=1$ in Theorem~\ref{liuthma}, upon noting that $(1; q)_k=\delta_{0k},$ the $_3\phi_2$ series
on the left-hand side in the theorem becomes $1$, we find the following corollary.

\begin{Corollary}\label{liuthmc} We have
	\begin{align*}
		&\sum_{n=0}^\infty \frac{\bigl(1-\alpha q^{2n}\bigr)(\alpha, q/a, q/b, q/c; q)_n}{(1-\alpha)(q, \alpha a, \alpha b, \alpha c; q)_n}\biggl(\frac{\alpha abc}{q^2}\biggr)^n {_4\phi_3}\biggl({{q^{-n}, \alpha q^n, \beta, \gamma}
			\atop{q/a, q/b, \beta \gamma u}}; q, q\biggr)\\
		&\qquad{}=\frac{(q\alpha, \alpha ac/q, \alpha bc/q, \alpha \beta ab/q, \alpha \gamma ab/q, \beta \gamma cu/q; q )_\infty}{\bigl(\alpha a, \alpha b, \alpha c, \alpha \beta abc/q^2, \alpha \gamma abc/q^2, \beta \gamma u; q \bigr)_\infty}\\
		&\qquad\quad{} \times{_3\phi_2} \biggl({{q/c, \alpha ab/qu, \alpha ab/q}\atop{\alpha \beta ab/q, \alpha \gamma ab/q}}; q, \beta \gamma cu/q \biggr).
	\end{align*}
\end{Corollary}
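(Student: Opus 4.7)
The plan is to derive Corollary \ref{liuthmc} as a direct specialization of Theorem~\ref{liuthma}, by taking the parameter $v$ to equal $1$. No new machinery is needed; the whole proof rests on the identity $(1;q)_k = \delta_{0k}$, which collapses one of the three summations on the left-hand side of Theorem~\ref{liuthma} down to a single term.

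On the left-hand side, I would substitute $v=1$ into the inner ${_3\phi_2}\bigl(q^{-k}, 1/u, 1/v; \beta, \gamma; q, \beta\gamma uv q^k\bigr)$. Because $1/v = 1$ and $(1;q)_j = 0$ for every $j\geq 1$, only the $j=0$ term survives in this ${_3\phi_2}$, and that term equals $1$. The triple sum therefore collapses to a double sum, with the $k$-summation becoming
\begin{equation*}
\sum_{k=0}^n \frac{(q^{-n}, \alpha q^n, \beta, \gamma;q)_k\, q^k}{(q, q/a, q/b, \beta\gamma u;q)_k},
\end{equation*}
which is exactly ${_4\phi_3}\bigl(q^{-n}, \alpha q^n, \beta, \gamma; q/a, q/b, \beta\gamma u; q, q\bigr)$. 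This matches the $k$-sum displayed on the left-hand side of Corollary~\ref{liuthmc}.

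On the right-hand side, the substitution $v=1$ is purely cosmetic: the infinite product prefactor becomes
\begin{equation*}
\frac{(q\alpha, \alpha ac/q, \alpha bc/q, \alpha\beta ab/q, \alpha\gamma ab/q, \beta\gamma cu/q; q)_\infty}{(\alpha a, \alpha b, \alpha c, \alpha\beta abc/q^2, \alpha\gamma abc/q^2, \beta\gamma u; q)_\infty},
\end{equation*}
and the accompanying ${_3\phi_2}$ factor becomes ${_3\phi_2}\bigl(q/c, \alpha ab/qu, \alpha ab/q; \alpha\beta ab/q, \alpha\gamma ab/q; q, \beta\gamma cu/q\bigr)$, in agreement with Corollary~\ref{liuthmc}. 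Since no $_3\phi_2$ vanishes on the right, unlike the special case $qv=\alpha ab$ treated in Corollary~\ref{liuthmb}, this factor remains in the final statement.

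There is essentially no obstacle: the only point one must check is the convergence hypothesis of Theorem~\ref{liuthma} at $v=1$, but it is implicitly inherited because the $_4\phi_3$ on the new left-hand side is a finite (terminating) sum for each $n$, while absolute convergence of the $n$-series and of the ${_3\phi_2}$ on the right-hand side is inherited verbatim from Theorem~\ref{liuthma}. Thus the corollary follows immediately once the theorem has been established.
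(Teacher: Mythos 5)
Your proposal is correct and is exactly the paper's own derivation: setting $v=1$ in Theorem~\ref{liuthma} and using $(1;q)_k=\delta_{0k}$ to collapse the inner ${}_3\phi_2$ on the left to $1$, which turns the $k$-sum into the stated ${}_4\phi_3$ while the right-hand side specializes verbatim. Nothing further is needed.
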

Setting $u=\alpha ab/q $ in the above corollary, upon noting that $(1; q)_k=\delta_{0k},$ the $_3\phi_2$ series
on the right-hand side in the theorem becomes $1$, we arrive at Theorem~\ref{rogersliuthm}.

The main tool to prove Theorem~\ref{liuthma} is the following
$q$-expansion formula, which is equivalent to \cite[Theorem~2]{Liu2002} and \cite[Theorem~1.1]{Liu2013RamanJ}.
\begin{Theorem}\label{liuthm} If $f(x)$ is an analytic function near $x=0$, then we have
	\begin{align}\label{newliuexpthm}
		f(\alpha a)
		=\sum_{n=0}^\infty \frac{\bigl(1-\alpha q^{2n}\bigr)(\alpha, q/a; q)_n (a/q)^n}
		{(1-\alpha)(q, \alpha a; q)_n} \sum_{k=0}^n \frac{(q^{-n}, \alpha q^n; q)_k q^k}
		{(q, q\alpha; q)_k} f\bigl(\alpha q^{k+1}\bigr).
	\end{align}
\end{Theorem}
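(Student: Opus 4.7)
The plan exploits the fact that the coefficient
\[\mu_n(a):=\frac{(1-\alpha q^{2n})(\alpha,q/a;q)_n(a/q)^n}{(1-\alpha)(q,\alpha a;q)_n}\]
vanishes at $a=q^j$ for every $j\in\{1,\ldots,n\}$, so at $a=q^m$ with $m\ge 1$ the right-hand side of \eqref{newliuexpthm} truncates to at most $m$ nonzero terms. Since both sides of \eqref{newliuexpthm} are analytic in $a$ near the origin and the sequence $\{q^m\}_{m\ge 1}$ accumulates at $a=0$, the identity principle for analytic functions reduces the proof to a verification at each $a=q^m$.

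Fix $m\ge 1$, set $a=q^m$, and interchange the orders of summation in the resulting finite double sum. Since the values $f(\alpha q),\ldots,f(\alpha q^m)$ can be independently prescribed by varying $f$, the identity at $a=q^m$ is equivalent to the finite combinatorial statement
\[\sum_{n=k}^{m-1}\mu_n\bigl(q^m\bigr)\cdot\frac{(q^{-n},\alpha q^n;q)_k\,q^k}{(q,q\alpha;q)_k}=\delta_{k,m-1}\qquad (0\le k\le m-1).\]
Standard simplifications of $(q^{-n};q)_k$ and $(\alpha q^n;q)_k$ together with the substitution $n=k+j$ reduce this, with $\lambda:=\alpha q^{2k}$ and $N:=m-1-k$, to showing that
\[\Sigma_N:=\sum_{j=0}^{N}\frac{(1-\lambda q^{2j})(\lambda,q^{-N};q)_j\,q^{jN}}{(q,\lambda q^{N+1};q)_j}\]
equals $1-\lambda$ at $N=0$ and vanishes for every $N\ge 1$.

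The heart of the argument is the evaluation of $\Sigma_N$. For $N=0$ only the $j=0$ term remains and contributes $1-\lambda$. For $N\ge 1$, the decomposition $(1-\lambda q^{2j})=(1-\lambda)+\lambda(1-q^{2j})$ splits $\Sigma_N$ into a balanced ${}_2\phi_1$ piece (to be evaluated by \eqref{qchu-van} after a Heine-type transformation) and a shifted companion obtained by using $(1-q^j)/(q;q)_j=1/(q;q)_{j-1}$; a direct calculation shows these two pieces cancel, yielding $\Sigma_N=0$. Alternatively $\Sigma_N$ may be recognized as a specialization of a terminating very-well-poised ${}_6\phi_5$ series and thus as an instance of Jackson's summation, obtained in turn as the terminating case of Proposition \ref{rogersthm}. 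Once the vanishing of $\Sigma_N$ for $N\ge 1$ is secured, matching the $N=0$ normalization to $\delta_{k,m-1}$ is a routine computation of the $k$-dependent prefactor extracted in the change of index, and the identity principle promotes the discrete identity to the analytic statement \eqref{newliuexpthm}.

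The main obstacle is the identity $\Sigma_N=0$ for $N\ge 1$: the rest of the argument is the bookkeeping required to expose it. Spotting the correct change of summation index $n=k+j$ and the very-well-poised decomposition of $(1-\lambda q^{2j})$ are the only nontrivial moves; after these, the cancellation is elementary.
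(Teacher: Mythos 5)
Your route is genuinely different from the paper's. The paper does not argue via a polynomial/identity-principle reduction at all: it quotes the expansion formula of [Liu2002, Theorem~2], namely $f(\alpha a)=\sum_{n\ge0}\frac{(1-\alpha q^{2n})(q/a;q)_n(\alpha a)^n}{(q,\alpha a;q)_n}\bigl[D_q^n\{f(x)(x;q)_{n-1}\}\bigr]_{x=\alpha q}$, and then converts the $q$-derivative term into the inner $k$-sum by Jackson's formula $D_q^nf(x)=x^{-n}\sum_k\frac{(q^{-n};q)_k}{(q;q)_k}q^kf(q^kx)$; no very-well-poised summation is needed. Your argument instead exploits the vanishing of $(q/a;q)_n$ at $a=q^m$, interchanges the finite double sum, and reduces the claim to the orthogonality relation $\sum_{n=k}^{m-1}\mu_n(q^m)\frac{(q^{-n},\alpha q^n;q)_kq^k}{(q,q\alpha;q)_k}=\delta_{k,m-1}$, which after the shift $n=k+j$ is exactly a terminating very-well-poised sum: with $\lambda=\alpha q^{2k}$, $N=m-1-k$, your $\Sigma_N$ is the limit $b,c\to\infty,0$ (with $bc$ fixed) of Jackson's terminating ${}_6\phi_5$, i.e.\ of the case $c=q^{N+1}$ of Proposition~\ref{rogersthm}, and indeed equals $(1-\lambda)\delta_{N0}$; I checked both the index-shift bookkeeping and this evaluation, so the combinatorial core is sound. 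What your approach buys is independence from the unpublished-looking expansion formula of [Liu2002] (at the price of invoking the classical ${}_6\phi_5$ sum, which mildly reverses the paper's logical order but is not circular since that sum is standard); what the paper's approach buys is that it never needs analyticity in $a$ of the right-hand side.

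Two soft spots you should repair. First, the identity-principle step requires knowing that the right-hand side of \eqref{newliuexpthm} converges locally uniformly in $a$ near $0$, hence is analytic there; you assert this but do not prove it, and since the inner sum can be as large as order $q^{-n^2/2}$ while the outer coefficient only supplies $q^{n(n-1)/2}$, the estimate is not automatic and genuinely carries weight in your argument (the paper sidesteps it entirely). Second, your ``elementary'' evaluation of $\Sigma_N$ is only sketched: the piece $\sum_j\frac{(\lambda,q^{-N};q)_jq^{jN}}{(q,\lambda q^{N+1};q)_j}$ is not a $q$-Chu--Vandermonde instance (its value already contains factors such as $1+\lambda q^2$ at $N=2$, so it has no single-product closed form), so the claimed ``Heine then \eqref{qchu-van}'' evaluation would fail as stated; the cancellation of the two pieces can still be arranged, but as written the reliable proof of $\Sigma_N=(1-\lambda)\delta_{N0}$ is your second route through the terminating ${}_6\phi_5$, and you should make that the primary argument.
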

The rest of the paper is organized as follows. In Section~\ref{sec2}, we will provide a proof of Theorem~\ref{liuthm}. In Section~\ref{sec3}, we will give an extension of the Sears $_4\phi_3$ transformation formula and provide a proof of Theorem~\ref{liuthma}. In Section~\ref{sec4}, we will investigate the application of Theorem~\ref{rogersliuthm} in number theory, and many results including Andrews' identities for the sums of three squares and the sums of three triangular numbers are proved.

 The following new transformation formula for $q$-series is proved in Section~\ref{sec5}, it is very useful in deriving identities for Rogers--Hecke type series.
\begin{Theorem}\label{trianHecke} The following double $q$-transformation formula holds:
\begin{align}
	&\sum_{n=0}^\infty \sum_{j=0}^{2n} \bigl(1-q^{4n+2}\bigr) q^{2n^2-j(j+1)/2}
	\frac{\bigl(q^2/a, q^2/c; q^2\bigr)_n}{\bigl(q^2a, q^2c; q^2\bigr)_n} (ac)^n\nonumber\\
	&\qquad{}=\frac{\bigl(ac, q^2; q^2\bigr)_\infty}{\bigl(q^2a, q^2c; q^2\bigr)_\infty}
	\sum_{n=0}^\infty \frac{\bigl(q^2/a, q^2/c; q^2\bigr)_n}{(q; q)_{2n}}
	\biggl(\frac{ac}{q}\biggr)^n.\label{trinumbers: eqn1}
\end{align}		
\end{Theorem}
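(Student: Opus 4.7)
The plan is to derive \eqref{trinumbers: eqn1} by specializing Corollary~\ref{liuthmc} under the substitutions $q\mapsto q^2$, $\alpha=q^2$, and $b=q$. With these choices the outer prefactor $\frac{(1-\alpha q^{2n})(\alpha,q/a,q/b,q/c;q)_n}{(1-\alpha)(q,\alpha a,\alpha b,\alpha c;q)_n}$ collapses, after using $(q;q^2)_n/(q^3;q^2)_n=(1-q)/(1-q^{2n+1})$ together with $(1-q^{4n+2})=(1-q^{2n+1})(1+q^{2n+1})$, to $\frac{1+q^{2n+1}}{1+q}\cdot\frac{(q^2/a,q^2/c;q^2)_n}{(q^2a,q^2c;q^2)_n}$, while the multiplier $(\alpha abc/q^2)^n$ becomes $(ac/q)^n$. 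This already reproduces the correct Pochhammer ratio appearing on the LHS of \eqref{trinumbers: eqn1}, leaving a residual factor of the form $(1-q^{2n+1})(1+q)q^{2n^2+n}$ that must be supplied by the inner $_4\phi_3$ together with the $j$-sum.

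I would then produce the quadratic exponent $q^{2n^2}$ and the finite triangular sum $\sum_{j=0}^{2n}q^{-j(j+1)/2}$ by taking a confluent limit of the remaining parameters $\beta,\gamma,u$ in Corollary~\ref{liuthmc}. A natural candidate is $\gamma\to\infty$ with $\beta\gamma u$ held fixed, since the leading behaviour $(\gamma;q^2)_k\sim(-\gamma)^kq^{k(k-1)}$ injects the required quadratic factor; after reindexing $j\mapsto 2n-k$ and invoking the doubling identity $(q;q)_{2n}=(q;q^2)_n(q^2;q^2)_n$, the length-$(n+1)$ base-$q^2$ series $_4\phi_3$ should reorganize into the length-$(2n+1)$ base-$q$ triangular sum appearing on the LHS of \eqref{trinumbers: eqn1}.

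On the right-hand side of Corollary~\ref{liuthmc}, the same confluent limit should collapse the $_3\phi_2$ via the $q$-Gauss summation into a product $(ac,q^2;q^2)_\infty/(q^2a,q^2c;q^2)_\infty$ multiplied by a $_2\phi_1(q^2/a,q^2/c;q;q^2,ac/q)$; rewriting this $_2\phi_1$ using the same doubling identity $(q;q)_{2n}=(q;q^2)_n(q^2;q^2)_n$ yields $\sum_n(q^2/a,q^2/c;q^2)_n/(q;q)_{2n}\cdot(ac/q)^n$, matching the RHS of \eqref{trinumbers: eqn1}.

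The main obstacle will be justifying the confluent $\gamma\to\infty$ limit, because several infinite $q$-products on both sides of Corollary~\ref{liuthmc} blow up and their divergences must be shown to cancel exactly against the $(-\gamma)^kq^{k(k-1)}$ growth inside the inner sum. Should this limiting procedure prove too fragile, an alternative is to apply Theorem~\ref{liuthm} directly with a carefully chosen analytic $f$ whose Taylor coefficients encode the triangular exponents $q^{-j(j+1)/2}$, and to match against \eqref{trinumbers: eqn1} coefficient by coefficient.
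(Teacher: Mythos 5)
Your opening specialization is fine: in base $q^2$ with $\alpha=q^2$, $b=q$ the outer factor of Corollary~\ref{liuthmc} does reduce to $\frac{1+q^{2n+1}}{1+q}\,\frac{(q^2/a,q^2/c;q^2)_n}{(q^2a,q^2c;q^2)_n}$ with multiplier $(ac/q)^n$. The proposal breaks down at the confluent limit, for two reasons. First, the limit $\gamma\to\infty$ with $\beta\gamma u$ held fixed is simply not available inside Corollary~\ref{liuthmc}: the inner $_4\phi_3$ has its argument frozen at $q^2$, so its $k$-th term contains $(\gamma;q^2)_k q^{2k}$ with nothing to compensate the growth $(\gamma;q^2)_k\sim(-\gamma)^k q^{k(k-1)}$; every term with $k\ge 1$ diverges, and on the right-hand side the products $\bigl(\alpha\gamma ab/q^2;q^2\bigr)_\infty/\bigl(\alpha\gamma abc/q^4;q^2\bigr)_\infty$ and the lower parameter $\alpha\gamma ab/q^2$ of the $_3\phi_2$ diverge as well. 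The identity holds for each fixed $\gamma$, but neither side has a limit, and no single normalization can tame terms growing like $\gamma^k$ for all $k$ at once, so ``the divergences cancel'' cannot be turned into an argument. Second, even granting some renormalized confluence, you never explain how the inner sum loses its dependence on $a$ (the lower parameter $q^2/a$ must be cancelled, which forces the choice $\beta=q^2/a$ that you leave unspecified), and, more importantly, you never identify the closed-form evaluation of the resulting terminating series that actually produces $q^{2n^2}\sum_{j=0}^{2n}q^{-j(j+1)/2}$; a pure $\gamma$-confluence injects only even exponents $q^{k(k-1)}$, while half of the required exponents $j(j+1)/2$ are odd. Your fallback (pick $f$ in Theorem~\ref{liuthm} and match coefficients) is a restatement of the problem, not a proof.

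The paper keeps every parameter finite. It first converts Corollary~\ref{liuthmc} into Theorem~\ref{liuthmd} by the Sears $_3\phi_2$ transformation (this is the step your ``$q$-Gauss collapse'' gestures at), specializes $\alpha=q$, $\beta=q/a$, $u=at$ to obtain \eqref{trih:eqn2}, then replaces $q$ by $q^2$ and sets $t=0$, $b=q^{-1}$, $\gamma=q^2$. The inner series becomes ${_3\phi_2}\bigl({q^{-2n},q^{2n+2},q^2\atop q^3,0};q^2,q^2\bigr)$, which is evaluated by \eqref{spvalue:eqn7} (a consequence of Theorem~\ref{andrewsliu} from Section~\ref{sec2}); that evaluation is the real engine creating the factor $q^{2n^2}\sum_{j=0}^{2n}q^{-j(j+1)/2}$, while the right-hand side degenerates to a ${_2\phi_1}$ that becomes $\sum_n (q^2/a,q^2/c;q^2)_n (ac/q)^n/(q;q)_{2n}$ via $(q;q)_{2n}=(q;q^2)_n(q^2;q^2)_n$ (the one step of your sketch that is correct as stated). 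To repair your argument, drop the confluence, choose $\beta$ so as to cancel the $a$-dependent lower parameter, take finite values of $\gamma$ and $u$, and supply a terminating-series evaluation of the type \eqref{spvalue:eqn7}.
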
	
When $a=c=0$, Theorem~\ref{trianHecke} immediately reduces to the following identity of Rogers--Hecke type, which seems to be new,
\begin{equation}\label{trinumbers:eqn2}
\sum_{n=0}^\infty \frac{q^{2n^2+n}}{(q; q)_{2n}}
=\frac{1}{\bigl(q^2; q^2\bigr)_\infty}
\sum_{n=0}^\infty \sum_{j=0}^{2n} \bigl(1-q^{4n+2}\bigr) q^{4n^2+2n-j(j+1)/2}.
\end{equation}
Choosing $(a, c)=(1, 0)$ in Theorem~\ref{trianHecke}, we deduce that
\begin{equation}\label{trinumbers:eqn3}
\sum_{n=0}^\infty (-1)^n \frac{q^{n^2}}{\bigl(q; q^2\bigr)_n}
=\sum_{n=0}^\infty \sum_{j=0}^{2n} (-1)^n \bigl(1-q^{4n+2}\bigr) q^{3n^2+n-j(j+1)/2}.
\end{equation}
Replacing $q$ by $-q$ in the equation above and simplifying, we conclude that
\begin{equation*}
\sum_{n=0}^\infty \frac{q^{n^2}}{\bigl(-q; q^2\bigr)_n}
=\sum_{n=0}^\infty \sum_{j=0}^{2n} (-1)^{n+j(j+1)/2} \bigl(1-q^{4n+2}\bigr) q^{3n^2+n-j(j+1)/2}.
\end{equation*}
By making the change $(a, c)\to (-1, 0)$ in Theorem~\ref{trianHecke}, we obtain that
\begin{equation*}
	\sum_{n=0}^\infty \frac{\bigl(-q^2; q^2\bigr)_n q^{n^2}}{(q; q)_{2n}}
	=\frac{\bigl(-q^2; q^2\bigr)_\infty}{\bigl(q^2; q^2\bigr)_\infty} \sum_{n=0}^\infty \sum_{j=0}^{2n} \bigl(1-q^{4n+2}\bigr) q^{3n^2+n-j(j+1)/2}.	
\end{equation*}
Letting $(a, c)=(q, 0)$ in Theorem~\ref{trianHecke} and performing a simple calculation, we deduce that
\begin{equation*}
\sum_{n=0}^{\infty} (-1)^n \frac{q^{n^2+n}}{\bigl(q^2; q^2\bigr)_n}
=\frac{\bigl(q; q^2\bigr)_\infty}{\bigl(q^2; q^2\bigr)_\infty}
\sum_{n=0}^{\infty}\sum_{j=0}^{2n} (-1)^n \bigl(1+q^{2n+1}\bigr) q^{3n^2+2n-j(j+1)/2}.
\end{equation*}

If we specialize Theorem~\ref{trianHecke} to the case when $a=c=q$, we conclude that
\begin{equation*}
\sum_{n=0}^\infty \frac{\bigl(q; q^2\bigr)_n q^n}{\bigl(q^2; q^2\bigr)_n}
=\frac{\bigl(q; q^2\bigr)^2_\infty}{\bigl(q^2; q^2\bigr)^2_\infty}
\sum_{n=0}^\infty \sum_{j=0}^{2n} \biggl(\frac{1+q^{2n+1}}{1-q^{2n+1}}\biggr)
q^{2n^2+2n-j(j+1)/2}.
\end{equation*}

It is obvious that when $c=q$, the right-hand side of (\ref{trinumbers: eqn1}) can be summed by the $q$-binomial theorem, namely,
\begin{equation*}
	\sum_{n=0}^\infty \frac{\bigl(q^2/a; q^2\bigr)_n a^n}{\bigl(q^2; q^2\bigr)_n}
	=\frac{\bigl(q^2; q^2\bigr)_\infty}{\bigl(a; q^2\bigr)_\infty}.
\end{equation*}

Hence, setting $c=q$ in (\ref{trinumbers: eqn1}) and then using the equation above, we arrive at the following remarkable $q$-formula:
\begin{equation}\label{trinumbers: eqn7}
\frac{\bigl(q^2, q^2, qa; q^2\bigr)_\infty}{\bigl(q, a, q^2a; q^2\bigr)_\infty}
=\sum_{n=0}^\infty \sum_{j=0}^{2n} \bigl(1+q^{2n+1}\bigr) q^{2n^2+n-j(j+1)/2}
\frac{\bigl(q^2/a; q^2\bigr)_n a^n}{\bigl(q^2a; q^2\bigr)_n}.
\end{equation}

Setting $a=q$ in the equation above and then using the Gauss identity
(see, for example~\mbox{\cite[p.~347]{Liu2010pacific})}
\[
\frac{\bigl(q^2; q^2\bigr)_\infty}{\bigl(q; q^2\bigr)_\infty}=\sum_{n=0}^\infty q^{n(n+1)/2}
\]
in the resulting equation, we arrive at Andrews' identity for the sums of three triangular num\-bers~\cite[equation~(1.5)]{Andrews86}:
\begin{equation}\label{trinumbers:eqn8}
\Biggl(\sum_{n=0}^\infty q^{n(n+1)/2}\Biggr)^3
=\sum_{n=0}^\infty \sum_{j=0}^{2n} \biggl(\frac{1+q^{2n+1}}{1-q^{2n+1}}\biggr) q^{2n^2+2n-j(j+1)/2},
\end{equation}
which implies Gauss' famous result that every integer is the sum of three triangular numbers.

Putting $a=-q$ and $a=0$ respectively in (\ref{trinumbers: eqn7}), we deduce the following two identities:
\begin{equation*}
\frac{\bigl(q^2, q^2, -q^2; q^2\bigr)_\infty}{\bigl(q, -q, -q; q^2\bigr)_\infty}
=\sum_{n=0}^\infty \sum_{j=0}^{2n} (-1)^n q^{2n^2+2n-j(j+1)/2}
\end{equation*}
and
\begin{equation*}
\frac{\bigl(q^2; q^2\bigr)^2_\infty}{\bigl(q; q^2\bigr)_\infty}
=\sum_{n=0}^\infty \sum_{j=0}^{2n} (-1)^n \bigl(1+q^{2n+1}\bigr) q^{3n^2+2n-j(j+1)/2}.
\end{equation*}

Some applications of Corollary~\ref{liuthmc} to Hecke-type series are discussed in Section~\ref{sec6}. In particular, for any nonnegative integer $m$, we prove that
\begin{gather}\label{finitemock}
\sum_{n=0}^\infty \frac{q^{n(n+1)/2}}{(-q; q)_n \bigl(1+q^{m+n+1}\bigr)}
=\sum_{n=0}^m \sum_{j=-n}^n (-1)^{n+j} \frac{\bigl(1-q^{2n+1}\bigr)(q; q)^2_m}
{(q; q)_{m-n} (q; q)_{m+n+1}} q^{(3n^2+n)/2-j^2}.
\end{gather}

In Section~\ref{sec7}, we will prove the following theorem among others.
\begin{Theorem}\label{liuheckethm} For $a\not=q^{-m}$, $m=0, 1, 2, \ldots$, we have
	\begin{align}\label{liuhr:eqn1}
		&\frac{(-a, q; q)_\infty} {(a, -q; q)_\infty}\sum_{n=0}^\infty \frac{q^{n(n-1)/2}}{(-a; q)_n}\\
		&\qquad{}=\sum_{n=0}^\infty \sum_{j=-n}^n (-1)^{n+j} q^{(3n^2-n)/2-j^2} \frac{(q/a; q)_n a^n}{(a; q)_n}
		\biggl(1+q^n+\bigl(1+q^{n+1}\bigr)\frac{q^{2n}\bigl(a-q^{n+1}\bigr)}{1-aq^n}\biggr)\nonumber
	\end{align}		
	or
	\begin{align}
\frac{(-a, q; q)_\infty} {(a, -q; q)_\infty}\sum_{n=0}^\infty \frac{q^{n(n-1)/2}}{(-a; q)_n}
		 &{}=2+2\sum_{n=1}^\infty (1+q^n) \frac{(q/a; q)_n a^n}{(a; q)_n} q^{n(n-1)/2}\nonumber\\
		& \quad{}-\sum_{n=1}^\infty \bigl(1-q^{2n}\bigr)\frac{(q/a; q)_n (-a)^n}{(a; q)_n} \sum_{|j|<n} (-1)^j
		q^{3n(n-1)/2-j^2}.\label{liuhr:eqn2}
	\end{align}
\end{Theorem}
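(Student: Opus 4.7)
The plan is to obtain Theorem~\ref{liuheckethm} as a carefully chosen specialization of Corollary~\ref{liuthmc}, combined with a finite theta-type expansion that turns the terminating $_3\phi_2$ on the right of that corollary into the bounded Hecke-type double sum that appears on the right of \eqref{liuhr:eqn1}. The outline has three main steps followed by an algebraic reconciliation.

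First, I would specialize the free parameters $\alpha,a,b,c,\beta,\gamma,u$ in Corollary~\ref{liuthmc} so that the ratio of infinite products on the right reduces to $(-a,q;q)_\infty/(a,-q;q)_\infty$ and so that, on the left, the inner $_4\phi_3$ collapses to a power-series expression whose $n$-sum telescopes, via Euler's identity $(x;q)_\infty=\sum_n (-x)^n q^{n(n-1)/2}/(q;q)_n$ or Heine's transformation, to a multiple of $\sum_{n\ge 0} q^{n(n-1)/2}/(-a;q)_n$. The natural candidates involve the symmetric choice of $b,c$ equal to $\pm q$ (or a related pair giving the $(-q;q)_\infty/(q;q)_\infty$ factors), together with $\beta,\gamma$ chosen so two upper parameters of the $_4\phi_3$ cancel against two lower ones.

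Second, the same substitution recasts the $_3\phi_2\left(\genfrac{}{}{0pt}{}{q/c,\alpha ab/qu,\alpha ab/q}{\alpha\beta ab/q,\alpha\gamma ab/q};q,\beta\gamma cu/q\right)$ on the right of Corollary~\ref{liuthmc} as a one-parameter series that I would expand by inserting a finite form of the Jacobi triple product, namely an identity of the shape $\sum_{j=-n}^{n}(-1)^{j}q^{-j^{2}}x^{j}=$ (an explicit polynomial in $x$). After substituting this expansion, double-summing, and reindexing, the $n$-th term of the resulting series carries precisely the Hecke exponent $(3n^{2}-n)/2-j^{2}$ and the weight $(q/a;q)_n a^n/(a;q)_n$, leaving the coefficient $C_n:=1+q^n+(1+q^{n+1})q^{2n}(a-q^{n+1})/(1-aq^n)$ from the combined factors. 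This produces \eqref{liuhr:eqn1}.

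Third, to pass from \eqref{liuhr:eqn1} to \eqref{liuhr:eqn2} I would split the inner sum into the boundary terms $j=\pm n$, each contributing $q^{n(n-1)/2}$, and the interior $|j|<n$. The key algebraic identity, verified by direct expansion, is
\begin{equation*}
C_n(1-aq^n)=(1-q^{2n+1})\bigl[1+q^{2n+1}+q^n(1-a)\bigr]=(1-q^{4n+2})+q^n(1-a)(1-q^{2n+1}).
\end{equation*}
This factors $C_n\cdot (q/a;q)_n a^n/(a;q)_n$ into a part proportional to $(1+q^n)$ and a part proportional to $(1-q^{2n})$, which, matched against the boundary and interior $j$-ranges respectively, yields the partial-theta sum $2\sum_{n\ge 1}(1+q^n)(q/a;q)_n a^n q^{n(n-1)/2}/(a;q)_n$ and the bounded Hecke sum $-\sum_{n\ge 1}(1-q^{2n})(q/a;q)_n(-a)^n/(a;q)_n\sum_{|j|<n}(-1)^{j}q^{3n(n-1)/2-j^{2}}$; the remaining constant $2$ in \eqref{liuhr:eqn2} is the $n=0$ contribution.

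The main obstacle will be Step~2: finding the precise specialization of $(\alpha,a,b,c,\beta,\gamma,u)$ that causes the LHS of Corollary~\ref{liuthmc} to collapse to $\sum_n q^{n(n-1)/2}/(-a;q)_n$ \emph{and} simultaneously produces a $_3\phi_2$ on the RHS matching a finite Jacobi-triple-product template with exponent $(3n^2-n)/2-j^2$. A secondary technical point is the sign bookkeeping coming from $(-1)^{n+j}$ and $(-a)^n$, together with confirming that the boundary contributions at $j=\pm n$ exactly absorb the $n=0$ term of $C_n-2$, which accounts for the seemingly asymmetric constant in \eqref{liuhr:eqn2}.
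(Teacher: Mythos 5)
Your outline assigns the two sides of Corollary~\ref{liuthmc} the wrong roles, and the two steps you yourself flag as ``obstacles'' are precisely where the substance of the proof lies. In the paper's argument (run through Theorem~\ref{liuthmd}, which is Corollary~\ref{liuthmc} composed with the Sears $_3\phi_2$ transformation), the Hecke-type double sum with weight $(q/a;q)_na^n/(a;q)_n$ comes from the \emph{very-well-poised left-hand side}: one takes $\alpha\to1$, $\gamma=q/a$, $u=at$, then $b=-1$, $\beta=q$, $t=0$, and finally $c\to0$, and evaluates the surviving terminating series ${_3\phi_2}\bigl({q^{-n},\,q^{n},\,q};{-q,\,0};q,q\bigr)$ by the finite-theta formula \eqref{spvalue:eqn5}; it is this evaluation --- a genuine lemma established in Section~\ref{sec2} via Theorem~\ref{andrewsliu} --- that produces $\sum_{j=-n}^{n}(-1)^jq^{-j^2}$, not a ``finite form of the Jacobi triple product,'' which is not an identity you actually possess. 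Conversely, the series $\sum_{n\ge0}q^{n(n-1)/2}/(-a;q)_n$ arises from the \emph{right-hand} nonterminating $_3\phi_2$ in the limit $c\to0$, because $(q/c;q)_n(-c/q)^n\to q^{n(n-1)/2}$; it cannot come from a ``telescoping'' of the left-hand sum, whose terms carry the very-well-poised weight $\bigl(1-\alpha q^{2n}\bigr)(\alpha,q/a,q/b,q/c;q)_n/\cdots$, which is exactly the structure of the Hecke side of \eqref{liuhr:eqn1}, not of $q^{n(n-1)/2}/(-a;q)_n$. So Steps 1--2 as written would not go through, and your guessed specialization ($b,c=\pm q$) is not the right one; the needed choices are $\alpha\to1$, $b=-1$, $\beta=q$, $c\to0$ together with the evaluation \eqref{spvalue:eqn5}.

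Step 3 also misdescribes how \eqref{liuhr:eqn1} and \eqref{liuhr:eqn2} are related. Your identity $C_n(1-aq^n)=\bigl(1-q^{4n+2}\bigr)+q^n(1-a)\bigl(1-q^{2n+1}\bigr)$ is correct, but the two displays are \emph{not} connected by a within-$n$ split of the inner sum into boundary terms $j=\pm n$ and interior terms: already at $n=0$ the term of \eqref{liuhr:eqn1} is $C_0=2+(1+q)(a-q)/(1-a)\neq2$, so no per-$n$ rearrangement can produce the constant $2$ of \eqref{liuhr:eqn2}. Both displays are regroupings of $2+\sum_{n\ge1}(1+q^n)(-1)^nq^{3n(n-1)/2}\frac{(q/a;q)_na^n}{(a;q)_n}\bigl(q^nS_n(q)-S_{n-1}(q)\bigr)$ with $S_n(q)=\sum_{j=-n}^n(-1)^jq^{-j^2}$: writing $q^nS_n-S_{n-1}=2(-1)^nq^{n-n^2}-(1-q^n)S_{n-1}$ gives \eqref{liuhr:eqn2}, while shifting the $-S_{n-1}$ contribution from level $n$ to level $n-1$ (an Abel-type regrouping across consecutive $n$, which is where the factor $(1+q^{n+1})q^{2n}(a-q^{n+1})/(1-aq^n)$ in $C_n$ comes from) gives \eqref{liuhr:eqn1}. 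Your boundary/interior matching, with the factorization you quote, does not implement this cross-$n$ shift and therefore does not prove the equivalence.
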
	
 Setting $a=q$ in (\ref{liuhr:eqn2}), we immediately get the following remarkable identity of Andrews \mbox{\cite[equation~(4.4)]{Andrews1967}} (see also \cite[equation~(3.62)]{ChenWang2020}):
 \begin{equation*}
 \sum_{n=0}^\infty \frac{q^{n(n-1)/2}}{(-q; q)_n}=2.
 \end{equation*}
Replacing $q$ by $q^2$ in (\ref{liuhr:eqn1}) and then taking $a=q$ and $a=-q$, respectively, in the resulting equation, we deduce that
\begin{equation*}
\sum_{n=0}^\infty \frac{q^{n(n-1)}}{\bigl(-q; q^2\bigr)_n}
=\frac{\bigl(q, -q^2; q^2\bigr)_\infty}{\bigl(-q, q^2; q^2\bigr)_\infty}
\sum_{n=0}^\infty \sum_{j=-n}^n (-1)^{n+j} \bigl(1+q^{2n}+q^{4n+1}+q^{6n+3}\bigr)
q^{3n^2-2j^2}
\end{equation*}
and
\begin{equation*}
	\sum_{n=0}^\infty \frac{q^{n(n-1)}}{\bigl(q; q^2\bigr)_n}
	=\frac{(-q; q)_\infty}{(q; q)_\infty}
	\sum_{n=0}^\infty \sum_{j=-n}^n (-1)^{j} \bigl(1+q^{2n}-q^{4n+1}-q^{6n+3}\bigr)
	q^{3n^2-2j^2}.
\end{equation*}

In Section~\ref{sec8}, we will derive more identities of Rogers--Hecke type.

\section[A q-series expansion formula and evaluation of some terminating q-series]{A $\boldsymbol{q}$-series expansion formula and evaluation \\ of some terminating $\boldsymbol{q}$-series}\label{sec2}

\subsection[A q-series expansion formula]{A $\boldsymbol{q}$-series expansion formula}
In order to prove Theorem~\ref{liuthm}, we need to review some basic knowledge of $q$-calculus.

The $q$-derivative was introduced by L. Schendel \cite{Schendel} in 1878 and F.H.~Jackson \cite{Jackson1908} in 1908, which is a $q$-analog of the ordinary derivative.
\begin{Definition}
	If $q$ is a complex number that is neither $0$ nor $1$, then for any function $f(x)$ of one variable, the $q$-derivative of $f(x)$
	with respect to $x$ is defined as
	\begin{equation*}
		{D}_{q}f(x)=\frac{f(x)-f(qx)}{x},
	\end{equation*}
	and we further define ${D}_{q}^0 f=f,$ and for $n\ge 1$, ${D}_{q}^n f={D}_{q}\bigl\{{D}_{q}^{n-1}f\bigr\}.$
\end{Definition}

Using mathematical induction, one can easily derive the following formula of Jackson \cite{Jackson1908}, which writes
${D}^n_{q}f(x) $ in terms of $f\bigl(q^k x\bigr)$ for $k=0, 1, \ldots, n$.
\begin{Lemma}\label{JacsonLem} For any nonnegative integer $n,$ we have
	\[
	{D}^n_{q}f(x)=x^{-n} \sum_{k=0}^n \frac{(q^{-n}; q)_k}{(q; q)_k}q^k f\bigl(q^k x\bigr).
	\]
\end{Lemma}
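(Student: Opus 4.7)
The plan is to prove the lemma by induction on $n$. For the base case $n=0$, only the $k=0$ term in the sum survives (since $(1;q)_0=1$ and $q^0=1$), giving $x^0 \cdot f(x) = f(x) = {D}^0_{q} f(x)$, as required.

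For the inductive step, abbreviate $c_{n,k} := (q^{-n};q)_k q^k / (q;q)_k$, so that the induction hypothesis reads ${D}^n_{q} f(x) = x^{-n} \sum_{k=0}^{n} c_{n,k} f(q^k x)$. Applying ${D}_{q}$ once more and using ${D}_{q}\{g\}(x) = (g(x) - g(qx))/x$ gives
\[
{D}^{n+1}_{q} f(x) = \frac{1}{x} \left[ x^{-n} \sum_{k=0}^{n} c_{n,k} f(q^k x) - q^{-n} x^{-n} \sum_{k=0}^{n} c_{n,k} f(q^{k+1} x) \right].
\]
Reindexing $k \mapsto k-1$ in the second sum and noting that $c_{n,n+1}=0$ (because the factor $(1 - q^{-n} q^n)$ in $(q^{-n};q)_{n+1}$ vanishes), with the convention $c_{n,-1}=0$, the right-hand side becomes
\[
x^{-(n+1)} \sum_{k=0}^{n+1} \bigl[ c_{n,k} - q^{-n} c_{n,k-1} \bigr] f(q^k x).
\]

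To close the induction it then suffices to verify the coefficient identity $c_{n,k} - q^{-n} c_{n,k-1} = c_{n+1,k}$. This is a short $q$-shifted factorial manipulation: from the defining product one sees that $(q^{-n-1};q)_k = (1-q^{-n-1})(q^{-n};q)_{k-1}$, and substituting this together with $(q;q)_k = (1-q^k)(q;q)_{k-1}$ reduces the identity to checking that $q(1-q^{k-1-n}) - q^{-n}(1-q^k) = -q^{-n}(1-q^{n+1})q/(\cdots)$ after clearing denominators — a one-line algebraic check. I do not anticipate any real obstacle; the only care required is in the bookkeeping of the $q$-shifted-factorial indices when reindexing the sums.
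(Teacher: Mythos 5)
Your proposal is correct and follows exactly the route the paper indicates for Lemma~\ref{JacsonLem} (the paper merely states that the formula follows by induction on $n$, which is what you carry out). The key coefficient recurrence does check out: for $k\ge 1$, writing $(q^{-n-1};q)_k=\bigl(1-q^{-n-1}\bigr)(q^{-n};q)_{k-1}$ and $(q^{-n};q)_k=(q^{-n};q)_{k-1}\bigl(1-q^{k-1-n}\bigr)$, one finds $q\bigl(1-q^{k-1-n}\bigr)-q^{-n}\bigl(1-q^k\bigr)=q-q^{-n}=q\bigl(1-q^{-n-1}\bigr)$, which gives $c_{n,k}-q^{-n}c_{n,k-1}=c_{n+1,k}$ and closes the induction.
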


 \begin{proof} [Proof of Theorem~\ref{liuthm}] If $f(x)$ is a formal power series in $x$ over the field of complex numbers~$\mathbb{C}$, then we have the following expansion formula \cite[Theorem~2]{Liu2002}:
	\begin{equation}\label{expliu:eqn1}
		f(\alpha a)=\sum_{n=0}^\infty \frac{\bigl(1-\alpha q^{2n}\bigr)(q/a; q)_n (\alpha a)^n}{(q, \alpha a;
			q)_n}\bigl[ {D}^n_{q}\{f(x)(x; q)_{n-1}\} \bigr]_{x=\alpha q}.
\end{equation}		
Using the Jackson formula in Lemma~\ref{JacsonLem} and the definition of $q$-shifted factorial, we have the following evaluation:
	\begin{align*}
		{D}^n_{q}\{f(x)(x; q)_{n-1}\} &{}=x^{-n}\sum_{k=0}^n \frac{(q^{-n}; q)_k}{(q; q)_k}q^k f\bigl(q^k x\bigr)\bigl(xq^k; q\bigr)_{n-1}\\
		&{}=x^{-n} (x; q)_{n-1}\sum_{k=0}^n \frac{\bigl(q^{-n}, xq^{n-1}; q\bigr)_k}{(q, x; q)_k}q^k f\bigl(q^k x\bigr),
	\end{align*}	
	which gives
	\[
	\bigl[ {D}^n_{q}\{f(x)(x; q)_{n-1}\} \bigr]_{x=\alpha q}
	=\frac{({\alpha}; q)_n }{(1-\alpha)(q\alpha)^{n}}
	\sum_{k=0}^n \frac{(q^{-n}, \alpha q^n; q)_k q^k}
	{(q, q\alpha; q)_k} f\bigl(\alpha q^{k+1}\bigr).
	\]
	Combining this equation with (\ref{expliu:eqn1}) completes the proof of Theorem~\ref{liuthm}.
 \end{proof}

Theorem~\ref{liuthm} is a convenient tool for deriving $q$-identities, and by appropriately choosing $f(x)$ in Theorem~\ref{liuthm} we can derive many expansion formulas for $q$-series. Now we will provide a proof of Proposition~\ref{rogersthm} using Theorem~\ref{liuthm}.
\begin{proof}[Proof of Proposition~\ref{rogersthm}]
In Theorem~\ref{liuthm}, we choose $f(x)$ as follows:
\[
f(x)=\frac{(bx/q, cx/q; q)_\infty}{\bigl(x, bcx/q^2; q\bigr)_{\infty}}.
\]
A straightforward computation shows that
\begin{align*}
f(\alpha a)=\frac{(\alpha ab/q, \alpha ac/q; q)_\infty}{\bigl(\alpha a, \alpha a bc/q^2; q\bigr)_\infty}
\end{align*}
and
\begin{align*}
f\bigl(\alpha q^{k+1}\bigr)=\frac{(\alpha b, \alpha c; q)_\infty (q\alpha, \alpha bc/q; )_k}{(q\alpha, \alpha bc/q)_\infty (\alpha b, \alpha c; q)_k}.
\end{align*}
Substituting these two equations into the $q$-expansion formula (\ref{newliuexpthm}), we conclude that
{\samepage\begin{align*}	
&\frac{(q\alpha, \alpha ab/q, \alpha ac/q, \alpha bc/q; q)_\infty}
{\bigl(\alpha a, \alpha b, \alpha c, \alpha abc/q^2; q\bigr)_\infty}\\
&\qquad{}=\sum_{n=0}^\infty \frac{\bigl(1-\alpha q^{2n}\bigr)(\alpha, q/a; q)_n (a/q)^n}{(1-\alpha)(\alpha a; q)_n}	
\sum_{k=0}^n \frac{(q^{-n}, \alpha q^n, \alpha bc/q; q)_n q^k}
{(q, \alpha b, \alpha c; q)_n}.
\end{align*}}	
 Using the $q$-Pfaff--Saalsch\"utz summation formula (\ref{qpfaff}), it is found that
\begin{equation*}
\sum_{k=0}^n \frac{(q^{-n}, \alpha q^n, \alpha bc/q; q)_n q^k}
{(q, \alpha b, \alpha c; q)_n}
=\frac{(q/b, q/c; q)_n}{(\alpha b, \alpha c; q)_n}\biggl(\frac{\alpha bc}{q}\biggr)^n.
\end{equation*}
Combining the above two equations, we immediately arrive at the Rogers $_6\phi_5$ summation in Proposition~\ref{rogersthm}.
\end{proof}

In order to indicate the power of Theorem~\ref{liuthm} again, we will use it to derive Watson's $q$-analogue of Whipple's theorem.

\subsection[Watson's q-analogue of Whipple's theorem]{Watson's $\boldsymbol{q}$-analogue of Whipple's theorem}

Watson's $q$-analog of Whipple's theorem (see, for example, \cite[equation~(2.5.1)]{Gas+Rah}) is stated in the following theorem.
\begin{Theorem}\label{WatsonWhipplethm} For any nonnegative integer $n$, we have
	\begin{align*}
		&\frac{(\alpha q, \alpha ab/q; q)_n}{(\alpha a, \alpha b; q)_n}
		{_4\phi_3}\biggl({{q^{-n}, q/a, q/b, \alpha cd/q }\atop{\alpha c, \alpha d, q^2/{\alpha ab q^n}}}; q, q\biggr)\\
		&\qquad{}={_8\phi_7}\biggl({{q^{-n}, q\sqrt{\alpha}, -q\sqrt{\alpha}, \alpha, q/a, q/b, q/c, q/d}
			\atop{\sqrt{\alpha}, -\sqrt{\alpha}, \alpha a, \alpha b, \alpha c, \alpha d, \alpha q^{n+1}}}; q, \frac{\alpha^2 abcdq^n}{q^2}\biggr).\nonumber
	\end{align*}
\end{Theorem}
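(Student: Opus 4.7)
The plan is to prove Watson's transformation by applying the $q$-expansion formula (\ref{newliuexpthm}) of Theorem~\ref{liuthm} to a suitable analytic function $f(x)$, in direct parallel with the proof of Proposition~\ref{rogersthm} just given. There, the choice $f(x)=(bx/q,cx/q;q)_\infty/(x,bcx/q^2;q)_\infty$ reduced the inner sum in (\ref{newliuexpthm}) to a balanced $_3\phi_2$, which was evaluated by the $q$-Pfaff--Saalsch\"utz identity (\ref{qpfaff}) to produce the Rogers $_6\phi_5$. For Watson's theorem, which carries an extra parameter $d$ and the terminating parameter $q^{-n}$, I would look for an $f$ of the same spirit but incorporating $d$ and a truncating factor of degree $n$ in $x$.

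Concretely, I would try
\[
f(x)=\frac{(bx/q,\,cx/q,\,dx/q;q)_\infty}{(x,\,\alpha bcdxq^{n-2};q)_\infty},
\]
possibly multiplied by an explicit polynomial factor in $x$ of degree $n$ that enforces the $q^{-n}$ truncation. The goal is to arrange that (i) $f(\alpha a)$ factors as the prefactor $(\alpha q,\alpha ab/q;q)_n/(\alpha a,\alpha b;q)_n$ times the balanced $_4\phi_3$ on the left-hand side of Watson's transformation, and (ii) $f(\alpha q^{k+1})$ is a ratio of $q$-shifted factorials of shape $(\alpha q,X;q)_k/(\alpha b,\alpha c,\alpha d;q)_k$ with the free parameter $X$ chosen so that the inner $k$-sum in (\ref{newliuexpthm}) is a terminating, balanced $_3\phi_2$ eligible for (\ref{qpfaff}). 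Summing that inner sum should then deliver the factors
\[
\frac{(q/b,\,q/c,\,q/d,\,q^{-n};q)_m}{(\alpha b,\,\alpha c,\,\alpha d,\,\alpha q^{n+1};q)_m}\bigl(\alpha bcd q^{n-1}\bigr)^m,
\]
and, multiplied by the outer Liu coefficient $\frac{(1-\alpha q^{2m})(\alpha,q/a;q)_m(a/q)^m}{(1-\alpha)(q,\alpha a;q)_m}$, exactly reproduce the generic term of the $_8\phi_7$ on the right-hand side of Watson's transformation.

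The main obstacle is making a single $f$ do double duty: the $q$-PS balance condition on the inner sum constrains the ``denominator parameters'' inside $f$, while the requirement that $f(\alpha a)$ decompose as the Watson $_4\phi_3$ times its prefactor constrains the dependence of $f$ on $a$. These two constraints together essentially pin down the truncating factor, and verifying them simultaneously is delicate precisely because the parameter $q^{-n}$ must emerge in two different places — inside the Watson $_4\phi_3$ on the LHS, and as the factor $(q^{-n};q)_m$ of the $_8\phi_7$ on the RHS. Once the correct $f$ is identified, the remainder of the proof is the routine substitution into (\ref{newliuexpthm}) and application of (\ref{qpfaff}) to the inner sum, mirroring the Rogers derivation line by line.
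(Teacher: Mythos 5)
Your plan---feed one cleverly chosen $f$ into (\ref{newliuexpthm}) and finish with a single application of (\ref{qpfaff}), exactly as in the Rogers ${}_6\phi_5$ derivation---has a genuine gap, and the candidate you write down meets neither of your own requirements. With $f(x)=(bx/q,cx/q,dx/q;q)_\infty/(x,\alpha bcdxq^{n-2};q)_\infty$ one gets $f(\alpha a)=(\alpha ab/q,\alpha ac/q,\alpha ad/q;q)_\infty/\bigl(\alpha a,\alpha^2abcdq^{n-2};q\bigr)_\infty$, an infinite product rather than the prefactor times the balanced ${}_4\phi_3$ on the left of Watson's theorem; and
$f\bigl(\alpha q^{k+1}\bigr)$ is a constant times $\bigl(\alpha q,\alpha^2bcdq^{n-1};q\bigr)_k/(\alpha b,\alpha c,\alpha d;q)_k$, so after the cancellation of $(q\alpha;q)_k$ the inner sum in (\ref{newliuexpthm}) becomes $\sum_k\frac{\bigl(q^{-m},\alpha q^m,\alpha^2bcdq^{n-1};q\bigr)_kq^k}{(q,\alpha b,\alpha c,\alpha d;q)_k}$, a terminating series with the three denominator parameters $\alpha b,\alpha c,\alpha d$, which (\ref{qpfaff}) cannot evaluate---summing it is essentially a Watson/Jackson-type problem again, so the route is circular. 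The same tension defeats any single $f$: for the $k$-sum to be a $q$-Pfaff--Saalsch\"utz-summable ${}_3\phi_2$, $f\bigl(\alpha q^{k+1}\bigr)$ may carry at most two denominator factors of the form $(\alpha\,\cdot\,;q)_k$, so $f$ can encode at most two of $b,c,d$; but for $f(\alpha a)$ to reproduce the Watson ${}_4\phi_3$ it must encode all three together with the truncation $q^{-n}$. The unspecified ``polynomial factor of degree $n$'' is precisely the missing content, and your sketch gives no reason to believe it exists.

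The paper escapes this impasse with a two-stage argument rather than a single substitution. It applies Theorem~\ref{liuthm} to the simple choice $f(x)=(bx/q;q)_m/(x;q)_m$ (only $b$ and a finite order $m$), evaluates the inner $k$-sum by (\ref{qpfaff}), and so obtains a finite expansion of $(\alpha ab/q;q)_m/(q,\alpha a,\alpha b;q)_m$ as a sum over $n\le m$. The parameters $c$, $d$ and the truncation are then introduced afterwards: both sides are multiplied by $\bigl(q^{-N},q/c,q/d;q\bigr)_mq^m/\bigl(q^2/\alpha cdq^N;q\bigr)_m$, summed over $0\le m\le N$, the order of summation is interchanged, and the resulting inner $m$-sum is evaluated (after the shift $m=n+j$) by a second application of (\ref{qpfaff}); a final relabelling of parameters yields Watson's transformation. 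This second summation and interchange---absent from your plan---is exactly what supplies the well-poised pair $q^{-n}$, $\alpha q^{n+1}$ and the two extra parameters that a single $q$-Pfaff--Saalsch\"utz evaluation cannot produce.
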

\begin{proof} For any nonnegative integer $m$, if we specialize Theorem~\ref{liuthm} to the case when $f(x)=(bx/q; q)_m/{(x; q)_m}$, we immediately have
\begin{equation*}
f(\alpha a)=\frac{(\alpha ab/q; q)_m}{(\alpha a; q)_m}
\end{equation*}	
and
\begin{equation*}
f\bigl(\alpha q^{k+1}\bigr)=\frac{\bigl(\alpha bq^{k}; q \bigr)_m}{\bigl(\alpha q^{k+1}; q\bigr)_m}=\frac{(\alpha q; q)_k (\alpha b;q)_{k+m}}{(\alpha b; q)_k (q\alpha ; q)_{k+m}}.	
\end{equation*}	
Now begin to evaluate the inner sum on the right-hand side of the equation in Theorem~\ref{liuthm}. It is easily seen that
\begin{equation*}
\sum_{k=0}^n \frac{(q^{-n}, \alpha q^n; q)_k q^k}
{(q, q\alpha; q)_k} f\bigl(\alpha q^{k+1}\bigr)
=\frac{(\alpha b; q)_m}{(q\alpha; q)_m}
\sum_{k=0}^n \frac{(q^{-n}, \alpha q^n, \alpha b q^m; q)_k q^k}
{\bigl(q, \alpha b, q^{m+1}\alpha; q\bigr)_k}.
\end{equation*}
Applying the $q$-Pfaff--Saalsch\"{u}tz summation formula (\ref{qpfaff}) to the summation on the right-hand side of the above equation, we find that
\begin{align*}
\sum_{k=0}^n \frac{(q^{-n}, \alpha q^n; q)_k q^k}
{(q, q\alpha; q)_k} f\bigl(\alpha q^{k+1}\bigr)&{}=\frac{(\alpha b; q)_m (q/b, q^{-m}; q)_n}{(q\alpha; q)_m
(\alpha b, q^{m+1}\alpha; q)_n} (\alpha b q^m)^n\\
&{}=(-1)^n \frac{(\alpha b; q)_m (q/b; q)_n (q; q)_m(\alpha b)^n}{(\alpha b; q)_n (q\alpha; q)_{m+n} (q; q)_{m-n}} q^{n(n-1)/2}.\nonumber
\end{align*}
Substituting the equation above into Theorem~\ref{liuthm}, we conclude that
\begin{align*}
&\frac{(\alpha ab/q; q)_m}{(q, \alpha a, \alpha b; q)_m}
=\sum_{n=0}^m \frac{\bigl(1-\alpha q^{2n}\bigr) (\alpha, q/a, q/b; q)_n}{(1-\alpha)(q, \alpha a, \alpha b; q)_n (q; q)_{m-n} (q\alpha; q)_{m+n}}\biggl(-\frac{\alpha ab}{q}\biggr)^n q^{n(n-1)/2}. \nonumber
\end{align*}
Let $N$ be a nonnegative integer. Multiplying both sides of the above equation by
\[
\frac{\bigl(q^{-N}, q/c, q/d; q\bigr)_m q^m}{\bigl(q^2/\alpha cdq^N; q\bigr)_m}
\]
and then summing the resulting equation from $m=0$ to $m=N$, and interchanging the order of summation, we find that
\begin{align}
{_4\phi_3}\biggl({{q^{-N}, q/c, q/d, \alpha ab/q }\atop{\alpha a, \alpha b, q^2/{\alpha cd q^N}}}; q, q\biggr)
&{}=\sum_{n=0}^N \frac{\bigl(1-\alpha q^{2n}\bigr) (\alpha, q/a, q/b; q)_n}{(1-\alpha)(q, \alpha a, \alpha b; q)_n }\biggl(-\frac{\alpha ab}{q}\biggr)^n q^{n(n-1)/2}\nonumber\\
&{}\quad \times \sum_{m=n}^N \frac{\bigl(q^{-N}, q/c, q/d ; q\bigr)_m q^m}
{(q; q)_{m-n} \bigl(q^2/{\alpha cd q^N; q}\bigr)_{m} (q\alpha; q)_{m+n}}.\label{expliu:eqn9}
\end{align}
Making the variable change $m=n+j$ and using the $q$-Pfaff--Saalsch\"{u}tz summation again, we deduce that
\begin{align*}
&\sum_{m=n}^N \frac{\bigl(q^{-N}, q/c, q/d ; q\bigr)_m q^m}
{(q; q)_{m-n} \bigl(q^2/{\alpha cd q^N}; q\bigr)_{m} (q\alpha; q)_{m+n}}\\
&\qquad{}=\frac{\bigl(q^{-N}, q/c, q/d; q\bigr)_n q^n}{(q\alpha; q)_{2n} \bigl(q^2/{\alpha cd q^N}; q\bigr)_n}
\sum_{j=0}^{N-n} \frac{\bigl(q^{-N+n}, q^{n+1}/c, q^{n+1}/d; q\bigr)_j q^{j}}
{\bigl(q, q^2/{\alpha cd q^{N-n}}, \alpha q^{2n+1}; q\bigr)_j}\nonumber\\	
&\qquad{}=\frac{\bigl(q^{-N}, q/c, q/d; q\bigr)_n (\alpha cq^n, \alpha dq^n; q)_{N-n} q^n}{(q\alpha; q)_{2n} \bigl(q^2/{\alpha cd q^N}; q\bigr)_n \bigl(\alpha q^{2n+1}, \alpha cd/q; q\bigr)_{N-n}}\nonumber\\
&\qquad{}=\frac{\bigl(q^{-N}, q/c, q/d; q\bigr)_n (\alpha c, \alpha d; q)_N}
{(q\alpha; q)_{n+N} (\alpha c, \alpha d; q)_n (\alpha cd/q; q)_N}
\bigl(-\alpha cd q^N\bigr)^n q^{-n(n+1)/2}. \nonumber
\end{align*}	
Substituting the above equation into (\ref{expliu:eqn9}) and then interchanging $(a, b)$ and $(c, d)$, and then change $n$ to $k$, and finally change $N$ to $n$, we complete the proof of Watson's $q$-analogue of Whipple's theorem.
\end{proof}

\subsection[Evaluation of some terminating q-series]{Evaluation of some terminating $\boldsymbol{q}$-series}
This subsection is devoted to the evaluation of some terminating $q$-series.

Taking $q/a=\alpha q^{n+1}$ in Theorem~\ref{WatsonWhipplethm} and simplifying, we find that \cite[equation~(2.1)]{Liu2013IJTN}
\begin{align}
	&{_4\phi_3}\biggl({{q^{-n}, \alpha q^{n+1}, q/b, \alpha cd/q }\atop{\alpha c, \alpha d, q^2/b}}; q, q\biggr)\nonumber\\
	&\qquad{}=\frac{(q, \alpha b; q)_n} {\bigl(\alpha q, q^2/b; q\bigr)_n } \Bigl(\frac{q} {b}\Bigr)^n \sum_{j=0}^n \frac{\bigl(1-\alpha q^{2j}\bigr) (\alpha, q/b, q/c, q/d; q)_j}
	{(1-\alpha)(q, \alpha b, \alpha c, \alpha d; q)_j}\biggl(\frac{\alpha bcd}{q^2}\biggr)^{j}.\label{newvalue:eqn1}
\end{align}

Letting $\alpha \to 1$ in the above equation and making a direct computation, we deduce that
\begin{align}\label{newvalue:eqn2}
	&{_4\phi_3}\biggl({{q^{-n}, q^{n+1}, q/b, cd/q }\atop{c, d, q^2/b}}; q, q\biggr)\\
	&\qquad{}=\frac{(b; q)_n} {\bigl(q^2/b; q\bigr)_n } \Bigl(\frac{q} {b}\Bigr)^n
	\Biggl(1+\sum_{j=1}^n \bigl(1+q^{j}\bigr)\frac{(q/b, q/c, q/d; q)_j}
	{(b, c, d; q)_j}\biggl(\frac{ bcd}{q^2}\biggr)^{j}\Biggr).\nonumber
\end{align}
Letting $b=-q$ in the equation above and then putting $c=d=0$ in the resulting equation yields%
\begin{equation}\label{newvalue:eqn3}
{_3\phi_2}\biggl({{q^{-n}, q^{n+1}, -1}\atop{0, -q}}; q, q\biggr)
=(-1)^n \sum_{j=-n}^n (-1)^j q^{j^2}.
\end{equation}
Taking $b=d=0$ and $c=-q$ in (\ref{newvalue:eqn2}), it is found that \cite[p.~2088]{Liu2013IJTN}
\begin{equation}\label{newvalue:eqn4}
{_2\phi_1}\biggl({{q^{-n}, q^{n+1}}\atop{ -q}}; q, 1\biggr)
=(-1)^n q^{-n(n+1)/2} \sum_{j=-n}^n (-1)^j q^{j^2}.
\end{equation}
(The factor $(-1)^n$ is missing in \cite[Propositions 2.4--2.6 and Theorem~4.10]{Liu2013IJTN}.)

Replacing $q$ by $q^2$ in (\ref{newvalue:eqn2}) and then putting $b=-q^2$, $c=-q$ and $d=0$ in the above equation, we immediately deduce that
\begin{equation}\label{newvalue:eqn5}
{_3\phi_2}\biggl({{q^{-2n}, q^{2n+2}, -1}\atop{-q, -q^2}}; q^2, q^2\biggr)=(-1)^n \sum_{j=-n}^n (-1)^j q^{j^2}.
\end{equation}	
 Replacing $q$ by $q^2$ in (\ref{newvalue:eqn2}) and then setting $b=-q$, $c=-q^2$ and $d=0$ in the resulting equation, we arrive at
 \begin{equation*}
 	{_3\phi_2}\biggl({{q^{-2n}, q^{2n+2}, -q}\atop{-q^2, -q^3}}; q^2, q^2\biggr)=(-q)^n \biggl(\frac{1+q}{1+q^{2n+1}}\biggr)\sum_{j=-n}^n (-1)^j q^{j^2}.
 \end{equation*}	
Writing $q$ by $q^2$ in (\ref{newvalue:eqn2}) and then
 taking $b=0$, $c=-q^2$ and $d=-q$, we conclude that \mbox{\cite[equa\-tion~(2.7)]{WangYee2019}}
 \begin{equation*}
 	{_3\phi_2}\biggl({{q^{-2n}, q^{2n+2}, q}\atop{-q, -q^2}}; q^2, 1\biggr)=(-1)^n q^{-n(n+1)}\sum_{j=-n}^n (-1)^j q^{j^2}.
 	\end{equation*}

 Letting $b\to \infty$ in (\ref{newvalue:eqn1}), we arrive at the following theorem \cite[Proposition~2.3]{Liu2013IJTN}.
 \begin{Theorem}\label{spwwthm} The following transformation formula for terminating $q$-series holds:
 	\begin{align}
 	&{_3\phi_2}\biggl({{q^{-n}, \alpha q^{n+1}, \alpha cd/q }\atop{\alpha c, \alpha d}}; q, q\biggr)\nonumber\\
 	&\qquad{}=(-\alpha)^n q^{n(n+1)/2}\frac{(q; q)_n}{(q\alpha; q)_n}
 	\sum_{j=0}^n (-1)^j\frac{\bigl(1-\alpha q^{2j}\bigr) (\alpha, q/c, q/d; q)_j}
 	{(1-\alpha)(q, \alpha c, \alpha d; q)_j}\biggl(\frac{ cd}{q}\biggr)^{j} q^{-j(j+1)/2}.\label{expliu:eqn11}
 \end{align}
 \end{Theorem}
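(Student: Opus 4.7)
The plan is to derive Theorem~\ref{spwwthm} by taking the limit $b \to \infty$ in equation~(\ref{newvalue:eqn1}). Since every sum involved is finite (terminating at $n$ or $j = n$), there are no convergence issues, and the limit can be exchanged freely with the summations. The argument splits into three simple bookkeeping steps: tracking the LHS, the prefactor on the RHS, and the summand on the RHS.

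First, I would handle the left-hand side. The ${}_4\phi_3$ in (\ref{newvalue:eqn1}) has $q/b$ as a numerator parameter and $q^2/b$ as a denominator parameter; for each fixed $k$, both $(q/b;q)_k$ and $(q^2/b;q)_k$ tend to $1$ as $b\to\infty$. Hence the ${}_4\phi_3$ degenerates termwise to
\[
{_3\phi_2}\biggl({{q^{-n}, \alpha q^{n+1}, \alpha cd/q}\atop{\alpha c, \alpha d}}; q, q\biggr),
\]
which is precisely the LHS of (\ref{expliu:eqn11}).

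Next, I would analyze the prefactor $\frac{(q,\alpha b;q)_n}{(\alpha q, q^2/b;q)_n}(q/b)^n$ on the right-hand side of~(\ref{newvalue:eqn1}). The key asymptotic is
\[
(\alpha b;q)_n = \prod_{k=0}^{n-1}\bigl(1-\alpha b q^k\bigr) \sim (-\alpha b)^n q^{n(n-1)/2}\qquad (b\to\infty),
\]
while $(q^2/b;q)_n\to 1$. Multiplying by $(q/b)^n$, the powers of $b$ cancel exactly, and the prefactor converges to $(-\alpha)^n q^{n(n+1)/2}(q;q)_n/(\alpha q;q)_n$, matching the prefactor in (\ref{expliu:eqn11}).

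Finally, I would take the limit inside the $j$-sum. Here $(q/b;q)_j\to 1$, and the same asymptotic gives
\[
\frac{(\alpha bcd/q^2)^j}{(\alpha b;q)_j}\;\longrightarrow\;(-1)^j\,(cd/q)^j\,q^{-j(j+1)/2},
\]
as a short calculation with the exponents $-2j-j(j-1)/2 = -j-j(j+1)/2$ confirms. Substituting this into the $j$-sum produces exactly the series on the right of (\ref{expliu:eqn11}), completing the derivation.

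There is no substantive obstacle: the entire proof is a direct $b\to\infty$ limit of the finite identity (\ref{newvalue:eqn1}), and the only care needed is to keep track of how the growing factor $(\alpha b;q)_m\sim (-\alpha b)^m q^{m(m-1)/2}$ balances against the explicit powers $(q/b)^n$ and $(\alpha bcd/q^2)^j$.
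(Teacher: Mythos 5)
Your proof is correct and coincides with the paper's own derivation, which simply states that Theorem~\ref{spwwthm} follows by letting $b\to\infty$ in~(\ref{newvalue:eqn1}); your termwise limits of the prefactor and of the $j$-sum, using $(\alpha b;q)_m\sim(-\alpha b)^m q^{m(m-1)/2}$, fill in exactly the bookkeeping the paper leaves implicit. Since both sides of~(\ref{newvalue:eqn1}) are finite sums, exchanging the limit with the summations is justified as you say, so no gap remains.
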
	
Letting $b \to \infty$ and $d\to \infty$ in Theorem~\ref{WatsonWhipplethm} and then setting $\alpha c=q^{-n}$,
we arrive at the following theorem of Andrews's theorem \cite[equation~(4.6)]{Andrews86}, which includes Shank's finite version of Euler's pentagonal number theorem \cite{Shanks1951} and Shank's finite version of Gauss's theorem~\cite{Shanks1958} as special cases.
\begin{Theorem} 
We have
\begin{align}\label{expliu:eqn12}
\frac{(q\alpha; q)_n}{(\alpha a; q)_n}\biggl(\frac{a}{q}\biggr)^n
	\sum_{j=0}^n \frac{(q/a; q)_j (\alpha q^n)^{-j}}{(q; q)_j}
=\sum_{j=0}^n \frac{\bigl(1-\alpha q^{2j}\bigr) (\alpha, q/a; q)_j (a/{\alpha})^j q^{-j^2-j}}{(1-\alpha)(q, \alpha a; q)_j}.
\end{align}	
\end{Theorem}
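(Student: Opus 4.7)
The plan is to derive the identity as a degenerate limit of Watson's $q$-analogue of Whipple's theorem (Theorem~\ref{WatsonWhipplethm}), exactly as indicated by the sentence preceding the statement. Concretely, I would first let $b\to\infty$ in Watson, then $d\to\infty$, and finally specialise $\alpha c=q^{-n}$. Both sides of the target identity are finite sums of length $n+1$, so no convergence subtlety arises; the work consists of asymptotic bookkeeping for the factorials that diverge in $b$ and $d$.

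For the $b\to\infty$ step, I would use that $(\alpha ab/q;q)_n/(\alpha b;q)_n\to(a/q)^n$ (both factors grow like $(-\alpha b)^n q^{n(n-1)/2}$, with an extra $a/q$ per factor on top) while $(q/b;q)_j$ and $(q^2/(\alpha ab q^n);q)_j$ both tend to $1$; hence the LHS of Watson collapses to
\[
\frac{(\alpha q;q)_n}{(\alpha a;q)_n}\biggl(\frac{a}{q}\biggr)^n{_3\phi_2}\biggl({{q^{-n},q/a,\alpha cd/q}\atop{\alpha c,\alpha d}};q,q\biggr).
\]
On the RHS, the combined limit $(q/b;q)_j\,b^j/(\alpha b;q)_j\to(-1)^j\alpha^{-j}q^{-j(j-1)/2}$ gets absorbed into the argument $\alpha^2 abcd q^n/q^2$. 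A symmetric $d\to\infty$ calculation sends the $_3\phi_2$ on the left to $\sum_{j=0}^{n}(q^{-n},q/a;q)_j c^j/[(q,\alpha c;q)_j]$ and, after invoking $(q\sqrt{\alpha},-q\sqrt{\alpha};q)_j/(\sqrt{\alpha},-\sqrt{\alpha};q)_j=(1-\alpha q^{2j})/(1-\alpha)$, turns the RHS into
\[
\sum_{j=0}^n\frac{(1-\alpha q^{2j})(q^{-n},\alpha,q/a,q/c;q)_j}{(1-\alpha)(q,\alpha a,\alpha c,\alpha q^{n+1};q)_j}(ac)^j q^{jn-j-j^2}.
\]

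Finally, setting $\alpha c=q^{-n}$ (so $c=q^{-n}/\alpha$) produces the desired cancellations: $(\alpha c;q)_j=(q^{-n};q)_j$ cancels the $(q^{-n};q)_j$ in the numerator and $(q/c;q)_j=(\alpha q^{n+1};q)_j$ cancels the $(\alpha q^{n+1};q)_j$ in the denominator, with no $0/0$ issue for $0\le j\le n$ since $(q^{-n};q)_j$ is nonzero in that range. The remaining power on the right becomes $(a/\alpha)^j q^{-j^2-j}$, while on the left $c^j$ becomes $(\alpha q^n)^{-j}$, so together with the prefactor $(\alpha q;q)_n(a/q)^n/(\alpha a;q)_n$ one obtains exactly the claimed identity. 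The main obstacle, and really the only non-automatic step, is keeping the signs $(-1)^j$, the powers of $\alpha$, and the Gaussian shifts $q^{-j(j-1)/2}$ consistent through the double limit, so that the various factors recombine into the clean exponent $jn-j-j^2$ on the right; after this bookkeeping, the specialisation of $c$ is immediate.
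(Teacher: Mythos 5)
Your proposal is correct and follows exactly the route the paper indicates: letting $b\to\infty$ and then $d\to\infty$ in Watson's $q$-analogue of Whipple's theorem and finally specialising $\alpha c=q^{-n}$; your limit bookkeeping (the factors $(a/q)^n$, the two contributions $(-1)^j\alpha^{-j}q^{-j(j-1)/2}$, the exponent $jn-j-j^2$, and the cancellations $(\alpha c;q)_j=(q^{-n};q)_j$, $(q/c;q)_j=(\alpha q^{n+1};q)_j$) is accurate. The paper gives no further details beyond this prescription, so your write-up simply supplies the computation it leaves implicit.
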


\begin{Theorem}\label{andrewsliu} We have
\begin{align*}
&{_3\phi_2}\biggl({{q^{-n}, \alpha q^{n+1},q}\atop{q^2/a, 0}}; q, q\biggr)\\
&\qquad{}=q^{n^2+2n} \frac{(q, \alpha a; q)_n}{\bigl(q\alpha, q^2/a; q\bigr)_n} \Bigl(\frac{\alpha}{a}\Bigr)^n \sum_{j=0}^n \frac{\bigl(1-\alpha q^{2j}\bigr) (\alpha, q/a; q)_j (a/{\alpha})^j q^{-j^2-j}}{(1-\alpha)(q, \alpha a; q)_j}.\nonumber
\end{align*}	
\end{Theorem}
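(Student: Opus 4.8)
The plan is to recognize the right-hand side as the terminating well-poised sum already treated just above, and to produce it by specializing the master transformation (\ref{newvalue:eqn1}). Write $S_n:=\sum_{j=0}^n\frac{(1-\alpha q^{2j})(\alpha,q/a;q)_j}{(1-\alpha)(q,\alpha a;q)_j}(a/\alpha)^j q^{-j^2-j}$; this is exactly the right-hand side of the theorem, and also the right-hand side of (\ref{expliu:eqn12}). In (\ref{newvalue:eqn1}) I would set $b=a$ and then let $c\to\infty$ and $d\to\infty$. Under these Euler-type limits one uses $(q/c;q)_j\to 1$, $(\alpha c;q)_j\sim(-\alpha c)^j q^{j(j-1)/2}$ (and likewise in $d$) together with the factor $(\alpha abcd/q^2)^j$; a short computation shows the two limits jointly contribute $q^{-j^2-j}$ and collapse the well-poised sum on the right of (\ref{newvalue:eqn1}) precisely to $S_n$, while the prefactor $\frac{(q,\alpha b;q)_n}{(\alpha q,q^2/b;q)_n}(q/b)^n$ becomes $\frac{(q,\alpha a;q)_n}{(\alpha q,q^2/a;q)_n}(q/a)^n$.

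On the left the same limits turn the terminating ${}_4\phi_3$ into the terminating series $T_n:=\sum_{k=0}^n\frac{(q^{-n},\alpha q^{n+1},q/a;q)_k}{(q,q^2/a;q)_k}(-1)^k\alpha^{-k}q^{-k(k-1)/2}$, the relevant ratio $\frac{(\alpha cd/q;q)_k}{(\alpha c;q)_k(\alpha d;q)_k}$ tending to $(-1)^k\alpha^{-k}q^{-k(k-1)/2-k}$, which after multiplication by the leftover $q^k$ produces the factor $(-1)^k\alpha^{-k}q^{-k(k-1)/2}$. This gives the intermediate identity $T_n=\frac{(q,\alpha a;q)_n}{(\alpha q,q^2/a;q)_n}(q/a)^n S_n$.

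It then remains to match $T_n$ with the ${}_3\phi_2$ on the left of the theorem. Writing $V_n$ for that ${}_3\phi_2$ and noting that the extra numerator parameter $q$ cancels the $(q;q)_k$ in the denominator while the lower $0$ is inert, one has $V_n=\sum_{k=0}^n\frac{(q^{-n},\alpha q^{n+1};q)_k}{(q^2/a;q)_k}q^k$; the theorem is then equivalent to the single transformation $V_n=q^{n^2+n}\alpha^n T_n$, since the prefactors combine to $q^{n^2+2n}\frac{(q,\alpha a;q)_n}{(q\alpha,q^2/a;q)_n}(\alpha/a)^n$, as required. This last step is the main obstacle: $V_n$ and $T_n$ are genuinely different series---the summand of $V_n$ carries no $(q;q)_k$ whereas that of $T_n$ does---so this is a true transformation, not a reindexing. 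I would establish it by reversing the order of summation $k\mapsto n-k$, using $(q^{-n};q)_{n-k}=(q^{-n};q)_n/(q^{-k};q)_k$ and the reflection $(Aq^{-k};q)_k=(-A)^k q^{-k(k+1)/2}(q/A;q)_k$, which recasts $V_n$ as a terminating ${}_1\phi_1$ whose argument sits exactly at its summation point, and then matching it against the corresponding reversal of $T_n$. Equivalently, and perhaps more cleanly, one may invoke the already-proved evaluation (\ref{expliu:eqn12}) in the form $S_n=\frac{(q\alpha;q)_n}{(\alpha a;q)_n}(a/q)^n\sum_{j=0}^n\frac{(q/a;q)_j(\alpha q^n)^{-j}}{(q;q)_j}$, reducing everything to the single-sum identity $V_n=q^{n^2+n}\alpha^n\frac{(q;q)_n}{(q^2/a;q)_n}\sum_{j=0}^n\frac{(q/a;q)_j(\alpha q^n)^{-j}}{(q;q)_j}$, which should follow from the reversal together with the $q$-Chu--Vandermonde summation (\ref{qchu-van}).
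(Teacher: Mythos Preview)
Your overall strategy is sound and, in its second branch, converges to exactly the paper's proof. The paper simply quotes an external identity (Lemma~4.1 of \cite{Liu2013RamanJ}),
\[
{_3\phi_2}\biggl({{q^{-n}, \alpha q^n,q}\atop{qc, 0}}; q, q\biggr)
=\alpha^n q^{n^2}\frac{(q; q)_n}{(qc; q)_n}\sum_{j=0}^n \frac{(c; q)_j \alpha^{-j} q^{j(1-n)}}{(q; q)_j},
\]
substitutes $\alpha\to q\alpha$, $c\to q/a$ to obtain $V_n$ as a multiple of the finite sum $\sum_j (q/a;q)_j(\alpha q^n)^{-j}/(q;q)_j$, and then invokes (\ref{expliu:eqn12}) to rewrite that finite sum as $S_n$. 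This is precisely your option~(b).

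Your step~1, however---specializing (\ref{newvalue:eqn1}) with $b=a$, $c,d\to\infty$ to obtain $T_n=\frac{(q,\alpha a;q)_n}{(\alpha q,q^2/a;q)_n}(q/a)^n S_n$---is correct but redundant. Once you decide to use (\ref{expliu:eqn12}) (which you need anyway), you already have $S_n$ expressed in terms of the single sum, and the detour through $T_n$ contributes nothing: the remaining obstacle $V_n=q^{n^2+n}\alpha^n T_n$ is equivalent, after substituting $T_n$ back, to the very single-sum identity you are trying to avoid. So the work of step~1 cancels out.

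The one genuine lacuna is your proof of that single-sum identity. The paper does not prove it either, but imports it and notes that it follows from ``the finite $q$-binomial theorem and the $q$-Chu--Vandermonde summation.'' Your suggestion of index reversal $k\mapsto n-k$ is plausible but, as written, does not obviously collapse $V_n$ to a summable ${}_1\phi_1$: after reversal the factors $(\alpha q^{2n+1-j};q)_j$ and $(q^{n+2-j}/a;q)_j$ produce Pochhammers like $(aq^{-n-1};q)_j$ and $(q^{-2n}/\alpha;q)_j$ that do not sit at a standard summation point. If you want a self-contained argument here, it is cleaner to expand $(\alpha q^n;q)_k$ by the finite $q$-binomial theorem and then sum the inner series by $q$-Chu--Vandermonde, which is the route the cited lemma takes.
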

\begin{proof}
 With the help of the finite $q$-binomial theorem and the $q$-Chu--Vandermonde summation, we \cite[Lemma~4.1]{Liu2013RamanJ} proved that
	\begin{align*}
	{_3\phi_2}\biggl({{q^{-n}, \alpha q^n,q}\atop{qc, 0}}; q, q\biggr)
	=\alpha^n q^{n^2}\frac{(q; q)_n}{(qc; q)_n}
		\sum_{j=0}^n \frac{(c; q)_j \alpha^{-j} q^{j(1-n)}}{(q; q)_j}.
	\end{align*}
Replacing $\alpha$ by $q\alpha$ and $c$ by $q/a$ in the equation above and then comparing the resulting equation with (\ref{expliu:eqn12}), we complete the proof of Theorem~\ref{andrewsliu}.
\end{proof}

Setting $a=-q$ in Theorem~\ref{andrewsliu} and then letting $\alpha \to 1$, we arrive at the following identity, which is equivalent to
\cite[equation~(6.1)]{Liu2013RamanJ}:
\begin{equation}\label{spvalue:eqn3}
{_3\phi_2}\biggl({{q^{-n}, q^{n+1},q}\atop{-q, 0}}; q, q\biggr)
=(-1)^n q^{n^2+n}\sum_{j=-n}^n (-1)^j q^{-j^2}.
\end{equation}

Letting $\alpha \to 1$ and $a \to \infty$ in Theorem~\ref{andrewsliu}, we get the following identity, which is equivalent to \cite[equation~(6.2)]{Liu2013RamanJ}:
\begin{equation*}
	{_3\phi_2}\biggl({{q^{-n}, q^{n+1},q}\atop{0, 0}}; q, q\biggr)
	=(-1)^n q^{3n(n+1)/2}\sum_{j=-n}^n (-1)^j q^{-{(3j^2+j)/2}}.
\end{equation*}

Taking $a=-q$ in Theorem~\ref{andrewsliu} and then letting $\alpha \to q^{-1}$ and making some calculation, we deduce the following identity, which is equivalent to the first identity of \cite[Lemma~6.1]{Liu2013RamanJ}:
\begin{align}\label{spvalue:eqn5}
	&{_3\phi_2}\biggl({{q^{-n}, q^{n},q}\atop{-q, 0}}; q, q\biggr)
=(-1)^n\frac{q^{n^2}}{1+q^n} \Biggl(q^n \sum_{j=-n}^n (-1)^j q^{-j^2}-\sum_{j={-n+1}}^{n-1} (-1)^j q^{-j^2}\Biggr).
\end{align}

Letting $\alpha \to q^{-1}$ and $a \to \infty$ in Theorem~\ref{andrewsliu} and making some calculation, we deduce the following identity, which is equivalent to the second identity of \cite[Lemma~6.1]{Liu2013RamanJ}:
\begin{align}
	&{_3\phi_2}\biggl({{q^{-n}, q^{n},q}\atop{0, 0}}; q, q\biggr)\nonumber\\
	&\qquad{}=(-1)^n\frac{q^{(3n^2-n)/2}}{1+q^n} \Biggl({ q^{2n}} \sum_{j=-n}^n (-1)^j q^{-(3j^2+j)/2}-\sum_{j={-n+1}}^{n-1} (-1)^j q^{-(3j^2+j)/2}\Biggr).\label{spvalue:eqn6}
\end{align}

Replacing $q$ by $q^2$ in Theorem~\ref{andrewsliu} and then letting $\alpha \to 1$ and setting $a=q$, we find that
\begin{align}
{_3\phi_2}\biggl({{q^{-2n}, q^{2n+2},q^2}\atop{q^3, 0}}; q^2, q^2\biggr)
&{}=\frac{(1-q)}{\bigl(1-q^{2n+1}\bigr)} q^{2n^2+3n}\sum_{j=-n}^{n} q^{-2j^2-j}	\nonumber\\
&{}={ \frac{(1-q)}{\bigl(1-q^{2n+1}\bigr)} q^{2n^2+3n}\sum_{j=0}^{2n} q^{-j(j+1)/2}.}\label{spvalue:eqn7}
\end{align}	

\begin{Theorem} \label{watliuthm} The following $q$-transformation formula holds:
	\begin{align*}
 \sum_{k=0}^n \frac{(q^{-n}, q^n; q)_k q^k}{(q/a, q/b; q)_k}
		&{}=(-1)^n q^{n(n+1)/2} (1-q^n) \frac{(a, b; q)_n}{(q/a, q/b; q)_n}\biggl(\frac{1}{ab}\biggr)^n\nonumber\\
		&\quad{} \times \Biggl\{\frac{1-ab}{(1-a)(1-b)}+\frac{(1/a, 1/b; q)_n}
		{(1-q^n)(a, b; q)_n} (-ab)^n q^{-n(n-1)/2} \nonumber\\
		&\qquad\quad{}+(1-ab)\sum_{j=1}^{n-1} \frac{(1+q^j) (1/a, 1/b; q)_j}{ (a, b; q)_{j+1}} (-ab)^j q^{-j(j-1)/2}\Biggr\}.\nonumber
	\end{align*}		
\end{Theorem}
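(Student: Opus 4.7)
My plan is to obtain Theorem~\ref{watliuthm} as a confluent limit $\alpha\to q^{-1}$ of Theorem~\ref{spwwthm} (equation~(\ref{expliu:eqn11})), following the same mechanism that produced equations~(\ref{spvalue:eqn5}) and (\ref{spvalue:eqn6}) from Theorem~\ref{andrewsliu} earlier in this section. The left-hand side of Theorem~\ref{watliuthm} can be written as
\[
\sum_{k=0}^{n}\frac{(q^{-n},q^{n};q)_{k}\,q^{k}}{(q/a,q/b;q)_{k}}={}_{3}\phi_{2}\biggl(\begin{matrix}q^{-n},\,q^{n},\,q\\ q/a,\,q/b\end{matrix};q,q\biggr),
\]
which matches the left-hand side of (\ref{expliu:eqn11}) under the identifications $\alpha q^{n+1}=q^{n}$, $\alpha c=q/a$, $\alpha d=q/b$, i.e.\ $\alpha=q^{-1}$, $c=q^{2}/a$, $d=q^{2}/b$. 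The only mismatch is the third top parameter $\alpha cd/q=q^{2}/(ab)$, which equals the desired $q$ only on the hyperplane $ab=q$, so a preliminary step is needed.

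\textbf{Steps.} First I would lift (\ref{expliu:eqn11}) to its four-parameter extension (\ref{newvalue:eqn1}) in order to free the relation $ab=q$, and then specialize the extra parameter so that the left-hand ${}_{4}\phi_{3}$ collapses onto the target ${}_{3}\phi_{2}$ for arbitrary $ab$. Next I would perform the singular limit $\alpha\to q^{-1}$ by writing $\alpha=q^{-1}(1+\epsilon)$ and Taylor-expanding in $\epsilon$. The prefactor $(q;q)_{n}/(q\alpha;q)_{n}$ develops a simple pole of residue $-(1-q^{n})/\epsilon$, while inside the $j$-sum the factor $(\alpha;q)_{j}/(1-\alpha)$ equals $1/(1-q^{-1})$ at $j=0$, equals $1$ at $j=1$, and vanishes like $-\epsilon(q;q)_{j-2}$ for $j\ge 2$. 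The $j=0$ and $j=1$ terms cancel exactly at leading order ($1+(-1)=0$, as one checks from $(c-q)(d-q)=(q-c)(q-d)$), so their combined first-order correction multiplied by the pole produces a single finite contribution which, after simplification, becomes the isolated constant $\frac{1-ab}{(1-a)(1-b)}$ of the bracket. The surviving $j\ge 2$ terms contribute directly; after reindexing, the interior range $1\le j\le n-1$ assembles with the uniform weight $(1-ab)(1+q^{j})\frac{(1/a,1/b;q)_{j}}{(a,b;q)_{j+1}}(-ab)^{j}q^{-j(j-1)/2}$, while the upper endpoint $j=n$ acquires its own boundary coefficient $\frac{(1/a,1/b;q)_{n}}{(1-q^{n})(a,b;q)_{n}}(-ab)^{n}q^{-n(n-1)/2}$ because the termination of the original sum at $k=n$ interferes with the $(1-q^{n})$ factor produced by the prefactor.

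\textbf{Main obstacle.} The genuine difficulty is the simultaneous bookkeeping of two singular sources (the simple pole of $1/(q\alpha;q)_{n}$ and the linear vanishing of $(\alpha;q)_{j}$ for $j\ge 2$) across three different regimes of $j$, coupled with the separate L'H\^opital extraction at the lower endpoint $j=0,1$ and the different boundary behaviour at the upper endpoint $j=n$. As a final sanity check I would specialize $(a,b)=(-1,\infty)$, which must reproduce equation~(\ref{spvalue:eqn5}), and $(a,b)=(\infty,\infty)$, which must reproduce equation~(\ref{spvalue:eqn6}); both identities were already established earlier in Section~\ref{sec2} as genuine $\alpha\to q^{-1}$ limits of Theorem~\ref{andrewsliu}, so matching them confirms that the three-part bracket (isolated $\frac{1-ab}{(1-a)(1-b)}$, isolated $j=n$ boundary, and interior sum with weights $(1-ab)(1+q^{j})$) has been computed correctly.
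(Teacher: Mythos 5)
Your overall instinct—that Theorem~\ref{watliuthm} should come out of Theorem~\ref{spwwthm} through a singular confluence in which a simple pole meets the cancellation of the $j=0$ and $j=1$ terms, producing the three-part bracket—is exactly the computation the paper leaves implicit, and your sanity checks against (\ref{spvalue:eqn5}) and (\ref{spvalue:eqn6}) are sensible. But the preliminary step on which your whole plan rests does not exist: you cannot specialize the extra parameter of (\ref{newvalue:eqn1}) so that its ${}_4\phi_3$ collapses onto $\sum_{k=0}^n (q^{-n},q^n;q)_k\,q^k/(q/a,q/b;q)_k$ for arbitrary $ab$. The ${}_4\phi_3$ in (\ref{newvalue:eqn1}) is balanced (Saalsch\"utzian): the product of its numerator parameters times $q$ equals the product of its denominator parameters, and this property survives any finite specialization or cancellation of a common upper/lower parameter. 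The target sum, written as ${}_3\phi_2\bigl(q^{-n},q^n,q;\,q/a,q/b;\,q,q\bigr)$, is balanced only when $ab=1$; a term-by-term match forces the multiset identity $\{q/b_0,cd/q^2,q/a,q/b\}=\{q,c/q,d/q,q^2/b_0\}$ (with $b_0$ the extra parameter of (\ref{newvalue:eqn1}) at $\alpha=q^{-1}$), whose product comparison gives $ab=1$ no matter how you assign the parameters; the degeneration $b_0\to\infty$ just reproduces (\ref{expliu:eqn11}) and the constraint $ab=q$ you already noticed, and $b_0\to0$ changes the argument of the series from $q$ to $1$. So the "imbalance" $ab$ of the left-hand side is a free continuous parameter that no specialization of (\ref{newvalue:eqn1}) can reach, and the limit analysis you describe never gets a chance to start.

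The missing idea is a transformation, not a specialization: the paper first applies the Sears ${}_4\phi_3$ formula (Proposition~\ref{Searstrs} with $\beta=q$, $\gamma=0$), which rewrites the sum with a free parameter inserted, $\sum_{k=0}^n (q^{-n},\alpha q^n;q)_k\,q^k/(q/a,q/b;q)_k$, as $(\alpha a,\alpha b;q)_n(q/\alpha ab)^n/(q/a,q/b;q)_n$ times ${}_3\phi_2\bigl(q^{-n},\alpha q^n,\alpha ab;\,\alpha a,\alpha b;\,q,q\bigr)$. This new ${}_3\phi_2$ fits Theorem~\ref{spwwthm} exactly, with $\alpha\mapsto\alpha/q$ and $(c,d)\mapsto(qa,qb)$ and with no constraint on $a,b$, and the resulting identity holds for generic $\alpha$; the theorem is then its $\alpha\to1$ limit, where the pole of $1/(\alpha;q)_n$ against the vanishing of the $j=0,1$ combination produces the isolated term $\frac{1-ab}{(1-a)(1-b)}$, the surviving terms give the interior sum, and the upper endpoint supplies the separate $j=n$ contribution—precisely the bookkeeping you sketched, but only available after the Sears step has put the series into the right shape.
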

\begin{proof}
	Setting $\beta=q$ and $\gamma=0$ in the Sears $_4\phi_3$ transformation in Proposition~\ref{Searstrs}, we deduce that
	\begin{equation*}
		{_3 \phi_2} \biggl({{q^{-n}, \alpha q^n, \alpha ab}
			\atop{\alpha a, \alpha b}}; q, q\biggr)
		=\frac{(q/a, q/b; q)_n}{(\alpha a, \alpha b; q)_n}
		\biggl(\frac{\alpha ab}{q}\biggr)^n
		\sum_{k=0}^n \frac{(q^{-n}, \alpha q^n; q)_k q^k}{(q/a, q/b; q)_k}.
	\end{equation*}
Replacing $\alpha$ by $\alpha/q$ and $(c, d)$ by $(qa, qb)$ in Theorem~\ref{spwwthm}, we conclude that
\begin{align*}
	\sum_{k=0}^n \frac{(q^{-n}, \alpha q^n; q)_k q^k}{(q/a, q/b; q)_k}
	&{}=(-1)^n q^{n(n+1)/2} \frac{(q, \alpha a, \alpha b; q)_n}{(\alpha, q/a, q/b; q)_n}\biggl(\frac{1}{ab}\biggr)^n\\
	&\quad{}\times \sum_{j=0}^n \frac{\bigl(1-\alpha q^{2j-1}\bigr) (\alpha/q, 1/a, 1/b; q)_j}{(1-\alpha/q) (q, \alpha a, \alpha b; q)_j} (-ab)^j q^{-j(j-1)/2}.\nonumber
\end{align*}	
Combining the above two equations, we complete the proof of Theorem~\ref{watliuthm}.
\end{proof}

Replacing $q$ by $q^2$ in Theorem~\ref{watliuthm} and then taking $a=-1$ and $b=-q$, we can deduce that
\begin{align}
 &(1+q^{2n})	\sum_{k=0}^n \frac{\bigl(q^{-2n}, q^{2n}; q^2\bigr)_k q^{2k}}{\bigl(-q, -q^2; q^2\bigr)_k}\nonumber\\
&\qquad{}=(-1)^n q^{n^2}\Biggl(q^{2n}\sum_{j=-n}^n (-1)^j q^{-j^2}-\sum_{j=-n+1}^{n-1}(-1)^j q^{-j^2}\Biggr).\label{spvalue:eqn11}
\end{align}

Setting $a=q^{1/2}$ and $b=-q^{1/2}$ in Theorem~\ref{watliuthm} and simplifying, we arrive at \cite[equation~(5.3)]{Andrews2012}%
\begin{equation*}
	{_3\phi_2}\left({{q^{-n}, q^{n}, q }\atop{q^{1/2}, -q^{1/2}}}; q, q\right)
	= q^{n(n-1)/2}\left(q^n \sum_{j=0}^n q^{-j(j+1)/2}
	-\sum_{j=0}^{n-1} q^{-j(j+1)/2}\right).
\end{equation*}

\section[The q-exponential differential operator and the Sears \_4phi\_3 transformation]{The $\boldsymbol{q}$-exponential differential operator\\ and the Sears $\boldsymbol{_4\phi_3}$ transformation}\label{sec3}

\subsection[Some basic properties of the q-exponential differential operator]{Some basic properties of the $\boldsymbol{q}$-exponential differential operator}

Now we give the definitions of the $q$-partial derivative and $q$-partial differential equations \cite{LiuHahn2015}.
{ \begin{Definition}
Given a multivariable function $f(x_1, x_2, \dots, x_n)$, the $q$-partial derivative of $f$ with respect to $x_j$, $j=1, 2, \ldots, n$, is its $q$-derivative with respect
$x_j$, regarding other variables as constants, which is denoted by
 $\partial_{q, x_j}f$. The formula for the $q$-partial derivative of $f$ is~given~by
\begin{equation*}
\partial_{q, x_j}f=\frac{f(x_1, \ldots, x_{j}, \ldots, x_n)-f(x_1, \ldots, x_{j-1}, qx_{j}, x_{j+1}, \ldots, x_n)}{x_j}.
\end{equation*}
\end{Definition}}
\begin{Definition}
A $q$-partial differential equation is an equation that contains unknown multivariable functions and their $q$-partial derivatives.
\end{Definition}
	
The concept of $q$-partial differential equations is quite useful in $q$-analysis \cite{AslanIsmail, LiuRam2013, LiuHahn2015, Liu2017}.
Based on $\partial_{q, x}$, we can construct the $q$-exponential differential operator $T(y\partial_{q, x})$ as follows:
\begin{align*}
	T(y\partial_{q, x})=\sum_{n=0}^\infty \frac{(y \partial_{q, x})^n}{(q; q)_n}.
\end{align*}

One of the most important results of the $q$-exponential differential operational operator is the following operator identity \cite[equation~(3.1)]{Liu2010}.
\begin{Proposition}\label{liuopeppb}For $\max\{|as|, |at|, |au|, |bs|, |bt|, |bu|, |abstu/v|\}<1, $	we have
	\begin{align*}
		&T(b \partial_{q, a})\biggl\{\frac{(av; q)_\infty}{(as, at, au; q)_\infty}\biggr\}
=\frac{(av, bv, abstu/v; q)_\infty}{(as, at, au, bs, bt, bu; q)_\infty}
 {_3\phi_2}\biggl({{v/s, v/t, v/u}\atop{av, bv}}; q, \frac{abstu}{v}\biggr).
	\end{align*}
\end{Proposition}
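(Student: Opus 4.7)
The plan is to apply the operator $T(b\partial_{q,a})$ directly to $f(a)=(av;q)_\infty/(as,at,au;q)_\infty$ using its series definition together with Jackson's formula (Lemma~\ref{JacsonLem}) for iterated $q$-derivatives. The key observation that makes this tractable is that $f(q^k a)=f(a)(as,at,au;q)_k/(av;q)_k$, so $\partial_{q,a}^n f(a)$ collapses to a single finite sum rather than an unwieldy double sum.

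Concretely, Jackson's formula yields
\begin{equation*}
\partial_{q,a}^n f(a)=\frac{f(a)}{a^n}\sum_{k=0}^n \frac{(q^{-n};q)_k (as,at,au;q)_k q^k}{(q;q)_k (av;q)_k}.
\end{equation*}
I would then substitute this into $T(b\partial_{q,a}) f(a)=\sum_{n\ge 0}b^n\partial_{q,a}^n f(a)/(q;q)_n$, interchange the $n$- and $k$-summations, and evaluate the inner sum on $n\ge k$. Using the elementary identity $(q^{-n};q)_k=(-1)^k q^{k(k-1)/2-nk}(q;q)_n/(q;q)_{n-k}$ followed by Euler's exponential series reduces the inner sum in closed form to $1/\bigl((aq/b;q)_k(b/a;q)_\infty\bigr)$, giving
\begin{equation*}
T(b\partial_{q,a})f(a)=\frac{(av;q)_\infty}{(as,at,au;q)_\infty (b/a;q)_\infty}\,{_3\phi_2}\biggl({{as,at,au}\atop{av,aq/b}};q,q\biggr).
\end{equation*}
Finally, I would match this with the stated right-hand side by invoking a three-term relation for non-terminating ${_3\phi_2}$ series that converts the series with upper parameters $as,at,au$ and argument $q$ into one with upper parameters $v/s,v/t,v/u$ and argument $abstu/v$, simultaneously exchanging the prefactor $1/(b/a;q)_\infty$ for $(bv,abstu/v;q)_\infty/(bs,bt,bu;q)_\infty$.

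The main obstacle is this last step: the intermediate ${_3\phi_2}$ has hypergeometric argument $q$ rather than $de/abc$, so Sears' transformation in its standard form does not apply. Instead one needs a Bailey-type three-term identity, or one can bypass the search for such an identity by expanding $1/(b/a;q)_\infty$ via the $q$-binomial theorem and verifying the equality of both sides coefficient-by-coefficient in $b$; at each order this reduces to the $q$-Pfaff--Saalsch\"utz summation~\eqref{qpfaff}. Throughout, the hypothesis $\max\{|as|,|at|,|au|,|bs|,|bt|,|bu|,|abstu/v|\}<1$ ensures that all interchanges of summation and all infinite products in the final answer are well-defined.
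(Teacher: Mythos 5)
Your reduction of $\partial_{q,a}^n f(a)$ via Jackson's formula is correct, but the next step is fatally flawed: after interchanging the $n$- and $k$-sums, the inner sum is $(-1)^kq^{k(k-1)/2}(b/a)^kq^{-k^2}\sum_{m\ge0}\bigl((b/a)q^{-k}\bigr)^m/(q;q)_m$, and this series diverges as soon as $|b/a|\ge|q|^k$, i.e., for all but finitely many $k$ (for any $b\neq0$). So the double series is not absolutely convergent, Fubini does not apply, and replacing the divergent inner sum by the meromorphic continuation $1/\bigl((b/a)q^{-k};q\bigr)_\infty$ is not a harmless formality -- it changes the value. Indeed your intermediate formula $T(b\partial_{q,a})f(a)=\frac{(av;q)_\infty}{(as,at,au;q)_\infty(b/a;q)_\infty}\,{_3\phi_2}\bigl({as,at,au\atop av,\,aq/b};q,q\bigr)$ is false: its $k$-th term has poles at $b=aq,\dots,aq^k$, so the right-hand side has singularities accumulating at $b=0$, whereas $T(b\partial_{q,a})f$ is analytic near $b=0$; and in the test case $t=u=0$, where one can rigorously compute $T(b\partial_{q,a})\{(av;q)_\infty/(as;q)_\infty\}=\frac{(av;q)_\infty}{(as;q)_\infty}\sum_{n\ge0}\frac{(v/s;q)_n(bs)^n}{(q;q)_n(av;q)_n}$ by expanding with the $q$-binomial theorem and applying the operator to powers of $a$, a numerical check (e.g., $q=0.1$, $a=0.3$, $b=0.2$, $s=0.5$, $v=0.7$) gives about $0.866$ for the true value versus about $3.46$ for your closed form. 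Consequently your final step cannot be repaired: no Bailey-type three-term relation or coefficient-by-coefficient appeal to $q$-Pfaff--Saalsch\"utz will carry your intermediate ${}_3\phi_2$ (argument $q$) to the stated right-hand side, because the two sides are simply not equal; this last step was in any case only sketched, not proved.

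Note also that the paper does not prove this proposition; it quotes it from the reference [Liu 2010, equation (3.1)], so there is no internal proof to compare with. A workable route consistent with the paper's toolkit is either (i) expand $f(a)$ as an absolutely convergent power series in $a$ (use the $q$-binomial theorem for $(av;q)_\infty/(as;q)_\infty$ and Euler's series for $1/(at;q)_\infty$, $1/(au;q)_\infty$), apply $T(b\partial_{q,a})$ termwise to $a^m$ -- which is legitimate by Proposition~\ref{liuopeppf} -- and resum, or (ii) verify that the claimed right-hand side, viewed as a function $F(a,b)$, satisfies the $q$-partial differential equation $\partial_{q,a}F=\partial_{q,b}F$ and reduces to $f(a)$ at $b=0$, and then conclude $F=T(b\partial_{q,a})f(a)$ from Proposition~\ref{liulemope}. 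Either of these avoids the illegitimate rearrangement that sinks your argument.
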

The following proposition can be found in \cite[Lemma~13.2]{LiuRam2013} and its proof is very short. For completeness, we will repeat the proof here.
\begin{Proposition}\label{liulemope} Suppose that $f(x, y)$ is a two-variable analytic function near $(x, y)=(0, 0).$
	Then
	$f(x, y)=T(y \partial_{q, x})f(x, 0)$ if and only if $\partial_{q, x}f=\partial_{q, y}f$.
\end{Proposition}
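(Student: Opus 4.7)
The plan is to verify both implications by passing to the Taylor expansion of $f$ at the origin and comparing coefficients.

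For the direction $f(x,y) = T(y\partial_{q,x})f(x,0) \Rightarrow \partial_{q,x}f = \partial_{q,y}f$, I would apply each $q$-partial derivative termwise to the series
\[
T(y\partial_{q,x})f(x,0) = \sum_{n=0}^\infty \frac{y^n}{(q;q)_n}\,\partial_{q,x}^n f(x,0).
\]
Since $\partial_{q,x}$ acts trivially on $y^n/(q;q)_n$, applying it just shifts the exponent of $\partial_{q,x}$ by one. Using the elementary identity $\partial_{q,y}\bigl(y^n/(q;q)_n\bigr) = y^{n-1}/(q;q)_{n-1}$ (valid because $(q;q)_n=(1-q^n)(q;q)_{n-1}$) and reindexing $n\mapsto n+1$, the same series emerges. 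Hence $\partial_{q,x}f=\partial_{q,y}f$.

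For the converse, I would expand $f(x,y) = \sum_{m,n\ge 0} a_{m,n}\,x^m y^n$ in its convergent Taylor series near the origin. Since $\partial_{q,x}x^m=(1-q^m)x^{m-1}$, the condition $\partial_{q,x}f=\partial_{q,y}f$ translates, after matching coefficients of $x^m y^n$, into the recurrence
\[
(1-q^{m+1})\,a_{m+1,n} \;=\; (1-q^{n+1})\,a_{m,n+1} \qquad (m,n\ge 0).
\]
An easy induction on $n$ from the boundary data $\{a_{m,0}\}$ then yields the closed form $a_{m,n} = \dfrac{(q;q)_{m+n}}{(q;q)_m(q;q)_n}\,a_{m+n,0}$. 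On the other hand, using $\partial_{q,x}^n x^{m+n}=\dfrac{(q;q)_{m+n}}{(q;q)_m}x^m$, the coefficient of $x^m y^n$ in $T(y\partial_{q,x})f(x,0)$ is exactly this same expression. Since both series agree coefficient-wise and both converge on a common polydisk, $f(x,y) = T(y\partial_{q,x})f(x,0)$.

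The only real obstacle is justifying the termwise operations: applying $\partial_{q,x}$, $\partial_{q,y}$, and the operator $T(y\partial_{q,x})$ to the double Taylor series. The analyticity of $f$ at $(0,0)$ gives absolute convergence on a polydisk, so the $q$-difference operators (which are linear combinations of evaluations at $(x,y)$ and $(qx,y)$ or $(x,qy)$) act termwise and the Fubini-type interchanges of summation are automatic. Everything else in the argument is pure linear algebra on power-series coefficients.
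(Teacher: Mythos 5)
Your argument is correct and follows essentially the same route as the paper: expand $f$ in a power series, use the equation $\partial_{q,x}f=\partial_{q,y}f$ to determine all coefficients from the boundary data $f(x,0)$, and identify the result with $T(y\partial_{q,x})f(x,0)$, the forward direction being the same termwise computation. The only cosmetic difference is that you work with the full double Taylor coefficients $a_{m,n}$ and an explicit closed form $a_{m,n}=\frac{(q;q)_{m+n}}{(q;q)_m(q;q)_n}a_{m+n,0}$, whereas the paper expands only in $y$ with coefficient functions $A_n(x)$ and derives $A_n=\partial_{q,x}^n A_0$ directly.
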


\begin{proof}
	If $f(x, y)=T(y \partial_{q, x})f(x, 0)$, then by a direct computation we find that
	$\partial_{q, x}f=\partial_{q, y}f=T(y \partial_{q, x})\partial_{q, x}f(x, 0).$
	Conversely, if $f$ is a two-variable analytic function near $(x, y)=(0, 0)$, satisfying
	$\partial_{q, x}f=\partial_{q, y}f,$ then we may assume that
	\[
	f(x, y)=\sum_{n=0}^\infty A_n(x) \frac{y^n}{(q; q)_n}.
	\]
	Substituting the equation into the $q$-partial differential equation
	$\partial_{q, x}f=\partial_{q, y}f$, we easily find~that
	\[
	A_n(x)=\partial_{q, x}A_{n-1}(x)=\cdots=\partial^n_{q, x}A_0(x).
	\]
	It is easily seen that $A_0(x)=f(x, 0).$ Thus, we conclude that
	\[
	f(x, y)=\sum_{n=0}^\infty \partial^n_{q, x}f(x, 0)
	\frac{y^n}{(q; q)_n}
	=T(y \partial_{q, x})f(x, 0).
	\]
	This completes the proof of Proposition~\ref{liulemope}.
\end{proof}

It is proved \cite[Proposition~13.4]{LiuRam2013} that if $f(x)$ is analytic near $x=0$, then $T(y \partial_{q, x})f(x)$ is analytic near $(x, y)=(0, 0)$.

Using Proposition~\ref{liulemope}, we \cite[Proposition~13.9]{LiuRam2013} proved the following proposition, which allows us to act an infinite series term by term with $T(y\partial_{q, x})$.
\begin{Proposition} \label{liuopeppf}Let $\{f_n(x)\}$ be a sequence of analytic functions near $x=0$
	such that the series
	\[
	\sum_{n=0}^\infty f_n(x)
	\]
	converges uniformly to an analytic function $f(x)$ near $x=0$, and the series
	\[
	\sum_{n=0}^\infty T(y\partial_{q, x})f_n(x)
	\]
	converges uniformly to an analytic function $f(x, y)$ near $(x, y)=(0, 0)$. Then we have
	$f(x, y)=T(y\partial_{q, x})f(x)$, or
	\[
	T(y\partial_{q, x})\Biggl\{\sum_{n=0}^\infty f_n(x)\Biggr\}=\sum_{n=0}^\infty T(y\partial_{q, x})f_n(x).
	\]
\end{Proposition}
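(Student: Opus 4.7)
The plan is to deduce the identity from the characterization already established in Proposition~\ref{liulemope}: a two-variable analytic function $g(x,y)$ near the origin admits the representation $g(x,y)=T(y\partial_{q,x})g(x,0)$ if and only if it satisfies the $q$-partial differential equation $\partial_{q,x}g=\partial_{q,y}g$. The strategy is therefore to show both that $f(x,y)$ satisfies this equation and that $f(x,0)=f(x)$, and then invoke Proposition~\ref{liulemope}.

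First I would treat each summand individually. Set $h_n(x,y):=T(y\partial_{q,x})f_n(x)$. Since $f_n$ is analytic near $x=0$, the fact cited from \cite[Proposition~13.4]{LiuRam2013} that $T(y\partial_{q,x})$ sends functions analytic near $x=0$ to functions analytic near $(x,y)=(0,0)$ shows each $h_n$ is analytic near the origin. Evaluating the defining series of $T(y\partial_{q,x})$ at $y=0$ leaves only the $n=0$ term, so $h_n(x,0)=f_n(x)$; the if-and-only-if direction of Proposition~\ref{liulemope} then gives $\partial_{q,x}h_n=\partial_{q,y}h_n$. Summing, $f(x,0)=\sum_{n\ge 0}h_n(x,0)=\sum_{n\ge 0}f_n(x)=f(x)$.

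Next I would pass the $q$-partial derivatives through the infinite sum. Because $\partial_{q,x}$ and $\partial_{q,y}$ are finite-difference operators, each value $\partial_{q,x}g(x,y)=(g(x,y)-g(qx,y))/x$ depends only on two evaluations of $g$ (and similarly for $\partial_{q,y}$). The uniform convergence hypothesis on $\sum h_n$ near the origin therefore justifies termwise differentiation, yielding
\begin{equation*}
\partial_{q,x}f(x,y)=\sum_{n=0}^\infty \partial_{q,x}h_n(x,y)=\sum_{n=0}^\infty \partial_{q,y}h_n(x,y)=\partial_{q,y}f(x,y).
\end{equation*}
With both the PDE and the initial condition in hand, a second application of Proposition~\ref{liulemope} to $f(x,y)$ gives $f(x,y)=T(y\partial_{q,x})f(x,0)=T(y\partial_{q,x})f(x)$, which is precisely the assertion.

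The main obstacle is the legitimacy of the termwise application of $\partial_{q,x}$ and $\partial_{q,y}$. Although only two evaluations of each series appear in a single difference quotient, one still must know that the resulting functions are analytic at the origin, not merely defined off the coordinate axes; this is where the analyticity hypotheses (rather than mere pointwise or uniform convergence of continuous functions) come in, since the division by $x$ or $y$ in the definition of the $q$-derivative is harmless only for functions vanishing appropriately, which is automatic for analytic functions expanded as power series. Once this analytic framework is secured, the two appeals to Proposition~\ref{liulemope} close the argument cleanly.
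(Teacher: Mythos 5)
Your argument is correct and follows essentially the same route as the paper: apply Proposition~\ref{liulemope} to each $T(y\partial_{q,x})f_n(x)$ to get the $q$-partial differential equation $\partial_{q,x}=\partial_{q,y}$ termwise, pass it to the sum $f(x,y)$, and apply Proposition~\ref{liulemope} once more together with $f(x,0)=f(x)$. Your added remarks on the initial condition and on the legitimacy of termwise application of the finite-difference operators merely flesh out steps the paper leaves implicit.
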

\begin{proof} If we use $f_n(x, y)$ to denote $T(y\partial_{q, x})f_n(x), $ then by Proposition~\ref{liulemope}, we find that $\partial_{q, x}f_n(x, y)=\partial_{q, y}f_n(x, y)$. It follows that
	$\partial_{q, x}f(x, y)=\partial_{q, y}f(x, y)$. Thus, using Proposition~\ref{liulemope} again,
	we have
	$
	f(x, y)=T(y\partial_{q, x})f(x, 0)=T(y\partial_{q, x})f(x).
	$
\end{proof}

A multivariable form of Proposition~\ref{liuopeppf} is proved in \cite{Liu2023Acta}, and a multiple $q$-exponential differential
operator identity is derived in \cite{Liu2023ActaSci}.

\subsection[An extension of the Sears \_4phi\_3 transformation formula]{An extension of the Sears $\boldsymbol{_4\phi_3}$ transformation formula}

One of the fundamental formulas in the theory of $q$-series is the Sears $_4\phi_3$ transformation formula \cite[p.~49, equation~(2.10.4)]{Gas+Rah},
which is stated in the following proposition.
\begin{Proposition}[the Sears $_4\phi_3$ transformation formula]\label{Searstrs}
	We have
	\begin{align*}
		&{_4 \phi_3} \biggl({{q^{-n}, \alpha q^n, \alpha \beta ab/q, \alpha \gamma ab/q}
			\atop{\alpha a, \alpha b,\alpha \beta \gamma ab/q}}; q, q\biggr)\\
		&\qquad{}=\frac{(q/a, q/b; q)_n}{(\alpha a, \alpha b; q)_n}
		\biggl(\frac{\alpha ab}{q}\biggr)^n
		{_4 \phi_3} \biggl({{q^{-n}, \alpha q^n, \beta, \gamma}
			\atop{q/a, q/b,\alpha \beta \gamma ab/q}}; q, q\biggr).
	\end{align*}
\end{Proposition}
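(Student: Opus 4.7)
The plan is to prove the Sears $_4\phi_3$ transformation (Proposition~\ref{Searstrs}) by a double-sum argument parallel to the paper's proof of Watson's $q$-analogue of Whipple's theorem (Theorem~\ref{WatsonWhipplethm}), using the $q$-Pfaff--Saalsch\"utz identity \eqref{qpfaff} as the main computational tool. The guiding idea is that Sears should be viewed as the two-parameter lift of the terminating balanced $_3\phi_2$ summation \eqref{qpfaff}, with the extra top parameters $\beta,\gamma$ on the RHS and the corresponding extra top parameters $\alpha\beta ab/q,\alpha\gamma ab/q$ on the LHS introduced by a suitable expansion of a Pochhammer factor.

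Step 1. In the LHS summand
\[
\frac{(q^{-n},\alpha q^n,\alpha\beta ab/q,\alpha\gamma ab/q;q)_k}{(q,\alpha a,\alpha b,\alpha\beta\gamma ab/q;q)_k}\,q^k,
\]
expand the ratio $(\alpha\beta ab/q,\alpha\gamma ab/q;q)_k/(\alpha\beta\gamma ab/q;q)_k$ as a finite sum in an auxiliary index $j$ so that Pochhammers $(\beta;q)_j$ and $(\gamma;q)_j$ emerge explicitly. This can be done either by chaining two applications of the $q$-binomial theorem, or — more cleanly in the spirit of the paper — by applying the $q$-exponential differential operator $T(y\partial_{q,a})$ to an identity obtained from \eqref{qpfaff}, with the action justified termwise by Propositions~\ref{liuopeppb} and~\ref{liuopeppf}.

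Step 2. Substitute the expansion into the LHS and interchange the $k$ and $j$ summations. The inner sum over $k$ takes the form of a terminating balanced $_3\phi_2$ that can be evaluated in closed form by \eqref{qpfaff}. The resulting evaluation produces precisely the prefactor $\frac{(q/a,q/b;q)_n}{(\alpha a,\alpha b;q)_n}(\alpha ab/q)^n$ appearing on the RHS of Sears, while leaving an outer $j$-sum that matches the claimed $_4\phi_3$ with top parameters $(q^{-n},\alpha q^n,\beta,\gamma)$ and bottom parameters $(q/a,q/b,\alpha\beta\gamma ab/q)$.

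Step 3. Collect the surviving outer $j$-sum and identify it with the RHS $_4\phi_3$ of Sears after routine Pochhammer bookkeeping.

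The main obstacle I expect is Step 1: producing the correct expansion of $(\alpha\beta ab/q,\alpha\gamma ab/q;q)_k/(\alpha\beta\gamma ab/q;q)_k$ so that the inner $k$-sum in Step 2 is recognizable as a balanced $_3\phi_2$ summable by \eqref{qpfaff}. A direct single $q$-binomial expansion does not suffice because the denominator Pochhammer $(\alpha\beta\gamma ab/q;q)_k$ couples the two parameters $\beta$ and $\gamma$; one must either carry out two successive expansions and then collapse them with a summation identity, or work at the operator level using $T(y\partial_{q,a})$ and the infinite-product form in Proposition~\ref{liuopeppb} (with attention to the convergence justifications). Once the right expansion is in place, Steps 2 and 3 reduce to elementary manipulations of $q$-shifted factorials.
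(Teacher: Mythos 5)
First, a point of comparison: the paper does not prove Proposition~\ref{Searstrs} at all; it is quoted from Gasper and Rahman, and the paper's own machinery runs in the opposite direction (Sears is an input to the proof of Theorem~\ref{ExtSearstrs}, whose $u=1$, $v=\alpha ab/q$ case recovers it). So there is no internal proof to mirror, and your sketch must stand on its own. Its skeleton --- expand a Pochhammer ratio, interchange the two finite sums, and close the inner sum with \eqref{qpfaff} --- is the classical route to the terminating Sears transformation and is exactly the machinery the paper deploys in its proof of Theorem~\ref{WatsonWhipplethm}, so the plan is viable in principle.

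The genuine gap is that Step~1, which you yourself flag as the obstacle, is not carried out, and neither of the two tools you float is the right one: chaining the $q$-binomial theorem does not decouple $\beta$ and $\gamma$ from the denominator factor $(\alpha\beta\gamma ab/q;q)_k$, and Proposition~\ref{liuopeppb} applies to functions of the special infinite-product form in the acted variable (moreover, to insert $\gamma$ one would act with $T(\gamma\partial_{q,\beta})$ on $\beta$, not with $T(y\partial_{q,a})$ on $a$, and one would still need the $\gamma=0$ base case to avoid circularity with Theorem~\ref{ExtSearstrs}). The missing idea is that the needed expansion is \eqref{qpfaff} itself read in reverse: multiply and divide the $k$-th summand by $(\alpha ab/q;q)_k$ and use the $q$-Pfaff--Saalsch\"utz sum with parameters $\beta$, $\gamma$ and balanced denominator $\alpha\beta\gamma ab/q$ to write
\begin{equation*}
\frac{(\alpha\beta ab/q,\ \alpha\gamma ab/q;q)_k}{(\alpha ab/q,\ \alpha\beta\gamma ab/q;q)_k}
=\sum_{j=0}^{k}\frac{(q^{-k},\beta,\gamma;q)_j\,q^j}{\bigl(q,\ \alpha\beta\gamma ab/q,\ q^{2-k}/\alpha ab;q\bigr)_j}.
\end{equation*}
Substituting this, interchanging the $j$- and $k$-sums, shifting $k=j+i$, and summing the resulting terminating balanced series in $i$ by \eqref{qpfaff} again (precisely the manipulation the paper performs around \eqref{expliu:eqn9}) produces the prefactor $\frac{(q/a,q/b;q)_n}{(\alpha a,\alpha b;q)_n}(\alpha ab/q)^n$ and the claimed ${}_4\phi_3$ with upper parameters $q^{-n},\alpha q^n,\beta,\gamma$. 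With that expansion supplied, your Steps~2 and~3 are indeed routine; without it, the proof is not yet a proof.
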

In this paper, we will extend the Sears $_4\phi_3$ transformation formula to the following theorem by using Propositions~\ref{liuopeppb} and ~\ref{liuopeppf}.
\begin{Theorem}\label{ExtSearstrs} The following transformation formula
	for double $q$-series holds:
	\begin{align*}
		&\sum_{k=0}^{n} \frac{(q^{-n}, \alpha q^n, \alpha \beta ab/q, \alpha \gamma ab/q ; q)_k q^k}
		{(q, \alpha a, \alpha b, \beta \gamma uv; q)_k} A_k\\
		&\qquad{}=\frac{(q/a, q/b; q)_n}{(\alpha a, \alpha b; q)_n}
		\biggl(\frac{\alpha ab}{q}\biggr)^n
		\sum_{k=0}^{n} \frac{(q^{-n}, \alpha q^n, \beta, \gamma; q)_k q^k}{(q, q/a, q/b, \beta \gamma uv; q)_k}B_k,
	\end{align*}
	where $A_k$ and $B_k$ are given by
	\begin{equation*}
		A_k= {_3\phi_2} \biggl({{q^{-k}, \alpha ab/qu, \alpha ab/qv}
			\atop{\alpha \beta ab/q, \alpha \gamma ab/q }}; q, \beta \gamma uv q^k \biggr),\qquad
		B_k={_3\phi_2 \biggl({{q^{-k}, 1/u, 1/v}\atop{\beta, \gamma}}; q, \beta \gamma uv q^k\biggr)}.
	\end{equation*}
\end{Theorem}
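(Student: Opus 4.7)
The plan is to deduce Theorem~\ref{ExtSearstrs} from the Sears $_4\phi_3$ transformation by applying the $q$-exponential differential operator $T(\gamma\partial_{q,\beta})$ to a suitable seed identity. The seed I have in mind is the $\gamma\to 0$ specialization of Proposition~\ref{Searstrs}, which after the vanishing lower parameter cancels against the vanishing upper one reads
\[
{_3\phi_2}\biggl({{q^{-n},\alpha q^n,\alpha\beta ab/q}\atop{\alpha a,\alpha b}};q,q\biggr)=\frac{(q/a,q/b;q)_n}{(\alpha a,\alpha b;q)_n}\biggl(\frac{\alpha ab}{q}\biggr)^n{_3\phi_2}\biggl({{q^{-n},\alpha q^n,\beta}\atop{q/a,q/b}};q,q\biggr).
\]
I would multiply both sides by $1/(\beta u,\beta v;q)_\infty$ and then apply $T(\gamma\partial_{q,\beta})$, treating $\alpha,a,b,u,v$ and $n$ as parameters.

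The crux of the argument is that on each side the operator may be brought inside the finite $k$-sum via Proposition~\ref{liuopeppf}, and its effect on the $k$-th summand is then supplied by Proposition~\ref{liuopeppb} after writing $(\alpha\beta ab/q;q)_k$ and $(\beta;q)_k$ as ratios of two infinite $q$-shifted factorials. Two different matchings of the parameters of Proposition~\ref{liuopeppb} are needed. The first, applied on the left, is chosen so that the emerging $_3\phi_2$ has upper arguments $q^{-k},\alpha ab/(qu),\alpha ab/(qv)$ and lower arguments $\alpha\beta ab/q,\alpha\gamma ab/q$; this is precisely $A_k$, accompanied by the factor $(\alpha\beta ab/q,\alpha\gamma ab/q;q)_k/(\beta\gamma uv;q)_k$. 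The second, applied on the right, is chosen so that the emerging $_3\phi_2$ has upper arguments $q^{-k},1/u,1/v$ and lower arguments $\beta,\gamma$; this is $B_k$, accompanied by $(\beta,\gamma;q)_k/(\beta\gamma uv;q)_k$. Crucially, both matchings throw off exactly the same $k$-independent prefactor $(\beta u,\beta v,\gamma u,\gamma v;q)_\infty/(\beta\gamma uv;q)_\infty$, so it cancels between the two sides of the transformed identity.

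After this cancellation, the two sides of the transformed seed collapse into the double sums of Theorem~\ref{ExtSearstrs}. The one technical point requiring attention is the termwise application of $T(\gamma\partial_{q,\beta})$ to a sum, which is covered by Proposition~\ref{liuopeppf}: the $(q^{-n};q)_k$ factor truncates the $k$-sum at $k=n$, so the interchange of operator and sum reduces to finitely many termwise applications of the operator to analytic functions of $\beta$. Beyond this, the proof is a bookkeeping exercise in $q$-shifted factorials, and the main creative step—and the only nontrivial obstacle—is spotting the two compatible matchings in Proposition~\ref{liuopeppb} that make Theorem~\ref{ExtSearstrs} fall out from a single application of the operator rather than from a more delicate iteration.
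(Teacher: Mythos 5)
Your proposal is correct and is essentially the paper's own proof: the paper likewise takes the $\gamma=0$ case of the Sears transformation as the seed, multiplies both sides by $1/(\beta u,\beta v;q)_\infty$, applies $T(\gamma\partial_{q,\beta})$ term by term to the two finite sums (justified exactly as you say, via Propositions~\ref{liuopeppf} and the analyticity in $\beta$ of each summand), and evaluates each term with the two matchings of Proposition~\ref{liuopeppb} that produce $A_k$ and $B_k$. The only quibble is cosmetic: the common $k$-independent factor is $(\beta\gamma uv;q)_\infty/(\beta u,\beta v,\gamma u,\gamma v;q)_\infty$, the reciprocal of what you wrote, but since it is identical on both sides it cancels just as you claim.
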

Note that when $u=1,$ $B_k$ reduces to $1$ and when $v=\alpha ab/q$, $A_k$ becomes $1$; thus we
can take $u=1$ and $v=\alpha ab/q$ in Theorem~\ref{ExtSearstrs} to obtain the Sears $_4\phi_3$ transformation formula. Hence, Theorem~\ref{ExtSearstrs} is an extension of the Sears $_4\phi_3$ transformation formula.
Now we prove Theorem~~\ref{ExtSearstrs}.
\begin{proof}
Setting $\gamma=0$ in the Sears $_4\phi_3$ transformation in Proposition~\ref{Searstrs}, we obtain the following transformation:
\begin{align*}
{_3 \phi_2} \biggl({{q^{-n}, \alpha q^n, \alpha \beta ab/q}
\atop{\alpha a, \alpha b}}; q, q\biggr)
=\frac{(q/a, q/b; q)_n}{(\alpha a, \alpha b; q)_n}
\biggl(\frac{\alpha ab}{q}\biggr)^n
{_3 \phi_2} \biggl({{q^{-n}, \alpha q^n, \beta}
\atop{q/a, q/b}}; q, q\biggr).
\end{align*}
Using the identity, $(z; q)_\infty=(z; q)_k \bigl(zq^k; q\bigr)_\infty$, we can rewrite this equation as
\begin{align*}
&\sum_{k=0}^{n} \frac{(q^{-n}, \alpha q^n; q)_k q^k}{(q, \alpha a, \alpha b; q)_k}
\frac{(\alpha \beta ab/q; q)_\infty}{(\alpha \beta ab q^k/q; q)_\infty} \\
&\qquad{}=\frac{(q/a, q/b; q)_n}{(\alpha a, \alpha b; q)_n}
\biggl(\frac{\alpha ab}{q}\biggr)^n
\sum_{k=0}^{n} \frac{(q^{-n}, \alpha q^n; q)_k q^k}{(q, q/a, q/b; q)_k}
\frac{(\beta; q)_\infty}{(\beta q^k; q)_\infty}.
\end{align*}
Multiplying both sides of the equation by $1/(\beta u, \beta v; q)_\infty$, we arrive at
\begin{align*}
&\sum_{k=0}^{n} \frac{(q^{-n}, \alpha q^n; q)_k q^k}{(q, \alpha a, \alpha b; q)_k}
\frac{(\alpha \beta ab/q; q)_\infty}{(\alpha \beta ab q^k/q, \beta u, \beta v; q)_\infty} \\
&\qquad{}=\frac{(q/a, q/b; q)_n}{(\alpha a, \alpha b; q)_n}
\biggl(\frac{\alpha ab}{q}\biggr)^n
\sum_{k=0}^{n} \frac{(q^{-n}, \alpha q^n; q)_k q^k}{(q, q/a, q/b; q)_k}
\frac{(\beta; q)_\infty}{(\beta q^k, \beta u, \beta v; q)_\infty}.
\end{align*}
Both sides of this equation are finite series, and each term of these two series is analytic at~${\beta=0}$.
Hence, we can apply $T(\gamma \partial_{q, \beta})$ to both sides of the above equation
to obtain
\begin{align}
&\sum_{k=0}^{n} \frac{(q^{-n}, \alpha q^n; q)_k q^k}{(q, \alpha a, \alpha b; q)_k}
T(\gamma \partial_{q, \beta}) \biggl\{\frac{(\alpha \beta ab/q; q)_\infty}{\bigl(\alpha \beta ab q^k/q, \beta u, \beta v; q\bigr)_\infty} \biggr\}\nonumber\\
&\qquad{}=\frac{(q/a, q/b; q)_n}{(\alpha a, \alpha b; q)_n}
\biggl(\frac{\alpha ab}{q}\biggr)^n \sum_{k=0}^{n} \frac{(q^{-n}, \alpha q^n; q)_k q^k}{(q, q/a, q/b; q)_k}
T(\gamma \partial_{q, \beta})\biggl\{\frac{(\beta; q)_\infty}{\bigl(\beta q^k, \beta u, \beta v; q\bigr)_\infty}\biggr\}.
\label{extsears:eqn1}
\end{align}
With the help of the $q$-exponential operator identity in Proposition~\ref{liuopeppb}, we have
\begin{align*}
& T(\gamma \partial_{q, \beta}) \biggl\{ \frac{(\alpha \beta ab/q; q)_\infty}{\bigl(\alpha \beta ab q^k/q, \beta u, \beta v; q\bigr)_\infty} \biggr\}\\
&\qquad{}=\frac{\bigl(\alpha \beta ab/q, \alpha \gamma ab/q, \beta \gamma uvq^k; q\bigr)_\infty}
{\bigl(\alpha \beta ab q^k/q, \beta u, \beta v, \alpha \gamma ab q^k/q, \gamma u, \gamma v; q\bigr)_\infty}
 {_3\phi_2} \biggl({{q^{-k}, \alpha ab/qu, \alpha ab/qv}
\atop{\alpha \beta ab/q, \alpha \gamma ab/q }}; q, \beta \gamma uv q^k \biggr)
\end{align*}
and
\begin{align*}
& T(\gamma \partial_{q, \beta}) \biggl\{ \frac{(\beta; q)_\infty}{\bigl(\beta q^k, \beta u, \beta v; q\bigr)_\infty}
 \biggr\}\\
&\qquad{}=\frac{\bigl(\beta, \gamma, \beta \gamma uvq^k; q\bigr)_\infty}
{\bigl(\beta q^k, \beta u, \beta v, \gamma q^k, \gamma u, \gamma v; q\bigr)_\infty}
{_3\phi_2 \biggl({{q^{-k}, 1/u, 1/v}\atop{\beta, \gamma}}; q, \beta \gamma uv q^k\biggr)}.
\end{align*}
Substituting these two equations into (\ref{extsears:eqn1}) and simplifying, we complete
the proof of Theorem~\ref{ExtSearstrs}.
\end{proof}

\subsection{The proof of Theorem~\ref{liuthma}}

\begin{proof}
	For convenience, we replace $a$ by $c$ in Theorem~\ref{liuthm} to conclude that
	\begin{align}\label{liurogers:eqn1}
	f(\alpha c)
	=\sum_{n=0}^\infty \frac{\bigl(1-\alpha q^{2n}\bigr)(\alpha, q/c; q)_n (c/q)^n}
	{(1-\alpha)(q, \alpha c; q)_n} \sum_{k=0}^n \frac{(q^{-n}, \alpha q^n; q)_k q^k}
	{(q, q\alpha; q)_k} f\bigl(\alpha q^{k+1}\bigr).
\end{align}
Using the ratio test, we can easily show that the following function is an analytic function of
$x$ at $x=0$:
\begin{align*}
\frac{(ax/q, bx/q, \beta \gamma uvx/q\alpha; q)_\infty}
{\bigl(x, \beta abx/q^2, \gamma abx/q^2; q\bigr)_\infty}
{_3\phi_2}\biggl({{q\alpha/x, \alpha ab/qu, \alpha ab/qv}\atop{\alpha \beta ab/q, \alpha \gamma ab/q}}; q, \frac{\beta \gamma uvx}{q\alpha}\biggr).
\end{align*}
Hence, we can replace $f(x)$ by this function in (\ref{liurogers:eqn1}). It is easily seen that
\begin{align*}
f(\alpha c) = \frac{(\alpha ac/q, \alpha bc/q, \beta \gamma uvc/q; q)_\infty}
{\bigl(\alpha c, \alpha \beta abc/q^2, \alpha \gamma abc/q^2; q\bigr)_\infty} {_3\phi_2}\biggl({{q/c, \alpha ab/qu, \alpha ab/qv}\atop{\alpha \beta ab/q, \alpha \gamma ab/q}}; q, \frac{\beta \gamma uvc}{q}\biggr)
\end{align*}
and
\begin{align*}
f\bigl(\alpha q^{k+1}\bigr)&=\frac{(\alpha a, \alpha b, \beta \gamma uv; q)_\infty (q\alpha, \alpha \beta ab/q, \alpha \gamma ab/q; q)_k}
{(q\alpha, \alpha \beta ab/q, \alpha \gamma ab/q; q)_\infty (\alpha a, \alpha b, \beta \gamma uv; q)_k}\\
&\quad \times {_3\phi_2} \biggl({{q^{-k}, \alpha ab/qu, \alpha ab/qv}
\atop{\alpha \beta ab/q, \alpha \gamma ab/q }}; q, \beta \gamma uv q^k \biggr).
\end{align*}
It follows that
\begin{align*}
& \frac{(\alpha {q}, \alpha ac/q, \alpha bc/q, \alpha \beta ab/q, \alpha \gamma ab/q, \beta \gamma cuv/q; q )_\infty}{\bigl(\alpha a, \alpha b, \alpha c, \alpha \beta abc/q^2, \alpha \gamma abc/q^2, \beta \gamma uv; q \bigr)_\infty}
{_3\phi_2} \biggl({{q/c, \alpha ab/qu, \alpha ab/qv}\atop{\alpha \beta ab/q, \alpha \gamma ab/q}}; q,
\frac{\beta \gamma cuv}{q} \biggr)\\
&\qquad{}=\sum_{n=0}^\infty \frac{\bigl(1-\alpha q^{2n}\bigr) (\alpha, q/c; q)_n (c/q)^n}{(1-\alpha)(q, \alpha c; q)_n}
\sum_{k=0}^{n} \frac{(q^{-n}, \alpha q^n, \alpha \beta ab/q, \alpha \gamma ab/q ; q)_k q^k}
{(q, \alpha a, \alpha b, \beta \gamma uv; q)_k}\\
&\qquad\hphantom{=\sum_{n=0}^\infty}{}
\times {_3\phi_2} \biggl({{q^{-k}, \alpha ab/qu, \alpha ab/qv}
\atop{\alpha \beta ab/q, \alpha \gamma ab/q }}; q, \beta \gamma uv q^k \biggr).
\end{align*}
Using the extension of the Sears $_4\phi_3$ transformation in Theorem~\ref{ExtSearstrs} to
the right-hand side of the above equation, we complete the proof of the theorem.
\end{proof}

\section{Some applications of Theorem~\ref{rogersliuthm} in number theory}\label{sec4}
\subsection{Some preliminaries}
Setting $c=0$ and letting $\alpha \to 1$ in Proposition~\ref{rogersthm}, we find that
\begin{equation} \label{pre:eqn1}
	\frac{(q, ab/q; q)_\infty}{(a, b; q)_\infty}
	=1+\sum_{n=1}^\infty (-1)^n (1+q^n) (ab)^n q^{n(n-3)/2} \frac{(q/a, q/b; q)_n}{(a, b; q)_n}.
\end{equation}
Setting $a=b=0$ in the above equation, we immediately arrive at Euler's pentagonal theo\-rem~\mbox{\cite[equation~(15)]{Gas+Rah}}%
\begin{equation*}
	(q; q)_\infty=1+\sum_{n=1}^\infty (-1)^n (1+q^n) q^{(3n^2-n)/2}.
\end{equation*}
Putting $b=0$ and $a=-q$ in (\ref{pre:eqn1}) and simplifying, we arrive at the following identity due to Gauss \cite[p.~347]{Liu2010pacific}:
\begin{equation}\label{pre:eqn3}
	\frac{(q; q)_\infty}{(-q; q)_\infty}
	=1+2\sum_{n=1}^\infty (-1)^n q^{n^2}
	=\sum_{n=-\infty}^\infty (-1)^n q^{n^2}.
\end{equation}
Replacing $q$ by $q^2$ in (\ref{pre:eqn1}) and then putting $a=q$ and $b=0$, we are led to the Gauss iden\-tity~\mbox{\cite[p.~347]{Liu2010pacific}}
\begin{equation}\label{pre:eqn4}
	\frac{\bigl(q^2; q^2\bigr)_\infty}{\bigl(q; q^2\bigr)_\infty}
	=1+\sum_{n=1}^\infty \bigl(1+q^{2n}\bigr) q^{2n^2-n}
	=\sum_{n=0}^\infty q^{n(n+1)/2}.
\end{equation}

Applying the Sears $_4\phi_3$ transformation formula in Proposition~\ref{Searstrs}
to Theorem~\ref{rogersliuthm}, we immediately obtain the following summation formula for $q$-series.
\begin{Theorem}\label{prodliuthm}
	For $\max\bigl\{\bigl|\alpha \beta abc/q^2\bigr|, \bigl|\alpha \gamma abc/q^2\bigr|\bigr\}<1$, we have
	\begin{align*}
		&\sum_{n=0}^\infty \frac{\bigl(1-\alpha q^{2n}\bigr)(\alpha, q/c; q)_n (c/q)^n}{(1-\alpha)(q, \alpha c; q)_n}~ {_4 \phi_3} \biggl({{q^{-n}, \alpha q^n, \alpha \beta ab/q, \alpha \gamma ab/q}
			\atop{\alpha a, \alpha b,\alpha \beta \gamma ab/q}}; q, q\biggr)\\
		&\qquad{}=\frac{\bigl(q\alpha, \alpha ac/q, \alpha bc/q, \alpha \beta ab/q, \alpha \gamma ab/q, \alpha \beta \gamma abc/q^2; q\bigr)_\infty}
		{\bigl(\alpha a, \alpha b, \alpha c, \alpha \beta abc/q^2, \alpha \gamma abc/q^2, \alpha \beta \gamma ab/q; q \bigr)_\infty}.
	\end{align*}
\end{Theorem}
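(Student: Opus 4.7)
The plan is to derive Theorem~\ref{prodliuthm} directly from Theorem~\ref{rogersliuthm} by substituting the Sears $_4\phi_3$ transformation into the inner hypergeometric series. The Sears transformation in Proposition~\ref{Searstrs} may be read in either direction; what I need is the reverse form, namely
\[
{_4\phi_3}\biggl({{q^{-n}, \alpha q^n, \beta, \gamma}\atop{q/a, q/b, \alpha\beta\gamma ab/q}}; q, q\biggr)
=\frac{(\alpha a, \alpha b; q)_n}{(q/a, q/b; q)_n}\biggl(\frac{q}{\alpha ab}\biggr)^n
{_4\phi_3}\biggl({{q^{-n}, \alpha q^n, \alpha\beta ab/q, \alpha\gamma ab/q}\atop{\alpha a, \alpha b, \alpha\beta\gamma ab/q}}; q, q\biggr),
\]
which is obtained by solving the identity in Proposition~\ref{Searstrs} for the right-hand $_4\phi_3$.

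Next I would insert this expression into the left-hand side of Theorem~\ref{rogersliuthm}. The external prefactor in Theorem~\ref{rogersliuthm} contains the pair $(q/a, q/b; q)_n$ in its numerator and $(\alpha a, \alpha b; q)_n$ in its denominator, which is exactly reciprocal to the ratio produced by the Sears transformation, so these two pairs cancel completely. Likewise, the geometric factor $(\alpha abc/q^2)^n$ combines with $(q/\alpha ab)^n$ to collapse to $(c/q)^n$. The remaining shifted factorials consolidate to $(\alpha, q/c; q)_n/(q, \alpha c; q)_n$, leaving precisely the summand on the left-hand side of Theorem~\ref{prodliuthm}.

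Since the right-hand side of Theorem~\ref{rogersliuthm} is independent of which form of the $_4\phi_3$ is used, the infinite product equals the claimed expression. The convergence hypothesis $\max\{|\alpha\beta abc/q^2|,|\alpha\gamma abc/q^2|\}<1$ is inherited directly from Theorem~\ref{rogersliuthm}, and no further convergence issue arises because only a term-by-term substitution of a finite $_4\phi_3$ is performed inside each summand.

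There is no substantive obstacle here; the only thing to watch is the arithmetic of the cancellations, in particular that the Sears ratio $(\alpha a, \alpha b; q)_n/(q/a, q/b; q)_n \cdot (q/\alpha ab)^n$ pairs cleanly with the coefficient of Theorem~\ref{rogersliuthm}. Once this matching is verified, the statement of Theorem~\ref{prodliuthm} is immediate.
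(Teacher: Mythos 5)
Your proposal is correct and follows exactly the paper's route: the paper also obtains Theorem~\ref{prodliuthm} by applying the Sears $_4\phi_3$ transformation of Proposition~\ref{Searstrs} to the inner sum in Theorem~\ref{rogersliuthm}, with the same cancellation of $(q/a,q/b;q)_n$, $(\alpha a,\alpha b;q)_n$ and the collapse of $(\alpha abc/q^2)^n(q/\alpha ab)^n$ to $(c/q)^n$. Your cancellation bookkeeping checks out, so nothing further is needed.
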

Next, we will discuss some applications of this formula in number theory and Rogers--Hecke type series.
\subsection{Andrews' identity for the sums of three squares}
Setting $\beta=q^2/ab$, $\gamma=0$ and letting $\alpha \to 1$ in Theorem~\ref{prodliuthm}, we conclude that
\begin{align}\label{liuprod:eqn1}
	1+\sum_{n=1}^\infty (1+q^n)\frac{(q/c; q)_n (c/q)^n}{(c; q)_n}~ {_3 \phi_2} \biggl({{q^{-n}, q^n, q}
		\atop{ a, b}}; q, q\biggr)
	=\frac{(q, q, ac/q, bc/q; q)_\infty}{(a, b, c, c; q )_\infty}.
\end{align}
Using the $q$-Chu--Vandermonde summation (\ref{qchu-van}), we easily deduce that
\begin{equation}\label{liuprod:eqn2}
	{_2\phi_1}\biggl({{q^{-n}, q^n}
		\atop{-q}}; q, q\biggr)=\frac{\bigl(-q^{1-n}; q\bigr)_n q^{n^2}}{(-q; q)_n}
	=\frac{2q^{n(n+1)/2}}{1+q^n}.
\end{equation}	
Taking $a=c=-q$, $b=q$ in (\ref{liuprod:eqn1}) and then using the equation above, we obtain the following well-known identity for the sums of two squares (see, for example \cite[equation~(3.35)]{Andrews74}):
\begin{equation*}
\Biggl(\sum_{n=-\infty}^\infty (-1)^n q^{n^2}\Biggr)^{2}=1+4\sum_{n=1}^\infty (-1)^n\frac{q^{n(n+1)/2}}{1+q^n}.
\end{equation*}

Using the $q$-Pfaff--Saalsch\"utz summation formula (\ref{qpfaff}), it is found that
\begin{equation*}
	{_{3}\phi_{2}}\biggl({{q^{-n}, q^n, q}
		\atop{-q, -q}}; q, q\biggr)=\frac{(-1; q)^2_n q^n}{(-q; q)^2_n}=\frac{4q^n}{(1+q^n)^2}.
\end{equation*}	
Putting $a=b=c=-q$ in (\ref{liuprod:eqn1}) and then substituting the above equation into the resulting equation, we arrive at
the Jacobi identity for the sums of four squares (see, for example, \cite[equation~(3.40)]{Andrews74})
\begin{equation*}	
	\Biggl(\sum_{n=-\infty}^\infty (-1)^n q^{n^2}\Biggr)^{4}=1+8\sum_{n=1}^\infty (-1)^n\frac{q^n}{(1+q^n)^2}.
\end{equation*}

Upon putting $b=0$, $a=c=-q$ in (\ref{liuprod:eqn1}), we immediately conclude that
\begin{equation*}	
\frac{(q; q)_\infty^3}{(-q; q)_\infty^3}
=1+2\sum_{n=1}^\infty (-1)^n ~ {_3 \phi_2} \biggl({{q^{-n}, q^n, q}
	\atop{ -q, 0}}; q, q\biggr).
\end{equation*}
By a direct computation, it is found that (\ref{spvalue:eqn5}) can be written as follows \cite[equation~(3.10)]{Liu2013IJTN}:
\begin{equation*}
	{_3\phi_2} \biggl({{q^{-n}, q^n, q}\atop{-q, 0}}; q, q\biggr)	
	=\frac{2q^n}{1+q^n}+(-1)^{n-1} \frac{(1-q^n)}{(1+q^n)}
	\sum_{|j|<n} (-1)^{j} q^{n^2-j^2}.
\end{equation*}
Combining the above two equations and using the Gauss identity (\ref{pre:eqn3}), we arrive at Andrews' identity for the sum of three squares \cite[equation~(5.16)]{Andrews86}
\begin{equation*}
	\Biggl(\sum_{n=-\infty}^\infty (-1)^n q^{n^2}\Biggr)^{3}
	=1+4\sum_{n=1}^\infty (-1)^n \frac{q^n}{1+q^n}
	-2\sum_{n=1}^\infty \frac{1-q^n}{1+q^n} \sum_{|j|<n}(-1)^j q^{n^2-j^2}.
\end{equation*}
It should be pointed out that based on this identity, D. Krammer \cite{Krammer1993} deduced the classical result of Legendre that a natural number is a sum of three squares if and only if it is not of the form $4^k (8l+7)$.

Letting $a=b=0$ in (\ref{liuprod:eqn1}) and then using (\ref{spvalue:eqn6}), we obtain
\begin{equation}\label{liuprod:eqn9}
\frac{(q; q)^2_\infty}{(c; q)^2_\infty}
=\sum_{n=0}^\infty (-1)^n \bigl(1-q^{2n+1}\bigr) \frac{(q/c; q)_n c^n}{(c; q)_{n+1}} \sum_{j=-n}^n (-1)^j q^{(3n^2+n)/2-(3j^2+j)/2}.
\end{equation}
When $c=0$, the above equation becomes \cite[equation~(1.2)]{Liu2002}
\begin{equation*} 
(q; q)_\infty^2=\sum_{n=0}^\infty \sum_{j=-n}^n (-1)^j \bigl(1-q^{2n+1}\bigr) q^{2n^2+n-(3j^2+j)/2}.
\end{equation*}
Replacing $q$ by $q^2$ in (\ref{liuprod:eqn9}) and then setting $c=q$ using the Gauss identity (\ref{pre:eqn4}), we conclude~that%
\begin{equation*}
\Biggl(\sum_{n=0}^\infty q^{n(n+1)/2}\Biggr)^2
=\sum_{n=0}^\infty \sum_{j=-n}^n (-1)^{n+j} \bigl(1+q^{2n+1}\bigr) q^{3n^2+2n-3j^2-j}.
\end{equation*}

Putting $c=0$ in (\ref{liuprod:eqn1}) gives
\begin{equation*}
1+\sum_{n=1}^\infty (-1)^n q^{n(n-1)/2}~(1+q^n){_3 \phi_2} \biggl({{q^{-n}, q^n, q}
	\atop{ a, b}}; q, q\biggr)=\frac{(q; q)_\infty^2}{(a, b; q)_\infty}.
\end{equation*}
Replacing $q$ by $q^2$ in the equation above and then putting $a=-q$, $b=-q^2$ and finally using~(\ref{spvalue:eqn11}), we obtain \cite[equation~(1.7)]{Liu2002}
\begin{equation*}
(q; q)_\infty \bigl(q^2; q^2\bigr)_\infty=\sum_{n=0}^\infty \sum_{j=-n}^n \bigl(1-q^{2n+1}\bigr)
(-1)^j q^{2n^2+n-j^2}.
\end{equation*}

Setting $a=0$ and $b=-q$ in (\ref{liuprod:eqn1}) and then using (\ref{spvalue:eqn5}) in the resulting equation, we can find that
 for $c\not=q^{-m}$, $m=0, 1, 2, \ldots,$
\begin{equation}\label{exliuprod:eqn1}
	\frac{(q, q, -c; q)_\infty}{(c, c, -q; q)_\infty}
	=\sum_{n=0}^\infty \sum_{j=-n}^n (-1)^{n+j} \bigl(1-q^{2n+1}\bigr)
	\frac{(q/c; q)_n c^n}{(c; q)_{n+1}} q^{n^2-j^2}.
\end{equation}
Letting $c=0$ in (\ref{exliuprod:eqn1}), we immediately arrive at the following identity:
\begin{equation*}
	\frac{(q; q)^2_\infty}{(-q; q)_\infty}
	=\sum_{n=0}^\infty \sum_{j=-n}^n (-1)^j \bigl(1-q^{2n+1}\bigr) q^{(3n^2+n)/2-j^2}.
\end{equation*}
Replacing $q$ by $q^2$ in (\ref{exliuprod:eqn1}) and then putting $c=-q$, we conclude that
\begin{equation*}
	\frac{(q; q)_\infty \bigl(q^2; q^2\bigr)_\infty}{(-q; q)_\infty \bigl(-q; q^2\bigr)_\infty}
	=\sum_{n=0}^\infty \sum_{j=-n}^n (-1)^j \bigl(1-q^{2n+1}\bigr)
	q^{2n^2+n-2j^2}.
\end{equation*}


Setting $\alpha=q$ and $\gamma=0$ in Theorem~\ref{prodliuthm}, we immediately arrive at
\begin{align} \label{liuprod:eqn15}
&\sum_{n=0}^\infty \frac{(q/c; q)_n (c/q)^n}{(qc; q)_n} \bigl(1-q^{2n+1}\bigr)
{_3\phi_2} \biggl({{q^{-n}, q^{n+1}, \beta ab}\atop{qa, qb}}; q, q\biggr)
=\frac{(q, ac, bc, \beta ab; q)_\infty}{(qa, qb, qc, \beta abc/q; q)_\infty}.
\end{align}

Replacing $q$ by $q^2$ in (\ref{liuprod:eqn15}) and then taking $\beta=q^2/{ab}$, we deduce that
\begin{align}
	&\sum_{n=0}^\infty \frac{\bigl(q^2/c; q^2\bigr)_n \bigl(c/q^2\bigr)^n}{\bigl(q^2c; q^2\bigr)_n} \bigl(1-q^{4n+2}\bigr)
	{_3\phi_2} \biggl({{q^{-2n}, q^{2n+2}, q^2}\atop{q^2a, q^2b}}; q^2, q^2\biggr)\nonumber\\
	&\qquad{}=\frac{\bigl(q^2, ac, bc, q^2; q^2\bigr)_\infty}{\bigl(q^2a, q^2b, qc^2, c; q^2\bigr)_\infty}.\label{liuprod:eqn20}
\end{align}
Taking $a=q$, $b=0$ in the equation above and then combining the resulting equation with~(\ref{spvalue:eqn7}), we find that
\begin{equation}\label{liuprod:eqn21}
\frac{\bigl(q^2, q^2, qc; q^2\bigr)_\infty}{\bigl(q, c, q^2c; q^2\bigr)_\infty}=\sum_{n=0}^\infty \sum_{j=0}^{2n} \bigl(1+q^{2n+1}\bigr) q^{2n^2+n-j(j+1)/2}
\frac{\bigl(q^2/c; q^2\bigr)_n c^n}{\bigl(q^2c; q^2\bigr)_n}.
\end{equation}
Letting $c=q$ in the equation above, we immediately arrive at
Andrews' identity for sums of three triangular numbers (\ref{trinumbers:eqn8}).

Taking $c=0$ and $c=-q$ respectively in (\ref{liuprod:eqn21}), we immediately deduce that
\begin{equation*}
\frac{\bigl(q^2; q^2\bigr)_\infty^2}{(q; q)_\infty}
=\sum_{n=0}^\infty \sum_{j=0}^{2n} (-1)^n \bigl(1+q^{2n+1}\bigr) q^{3n^2+2n-j(j+1)/2},
\end{equation*}
and
\begin{equation*}
\frac{(q; q)_\infty \bigl(q^4; q^4\bigr)^3_\infty}{\bigl(q^2; q^2\bigr)^2_\infty}
=\sum_{n=0}^\infty \sum_{j=0}^{2n} (-1)^n q^{3n^2+2n-j(j+1)/2}.	
\end{equation*}

Using the $q$-Pfaff--Saalsch\"utz summation formula (\ref{qpfaff}), it is found that
\begin{equation}\label{liuprod:eqn24}
{_3\phi_2} \biggl({{q^{-2n}, q^{2n+2}, q^2}\atop{q^3, q^3}}; q^2, q^2\biggr)
=\frac{(1-q)^2 q^{2n}}{\bigl(1-q^{2n+1}\bigr)^2}.
\end{equation}

Choosing $a=b=q$ in (\ref{liuprod:eqn20}) and then combining the resulting equation with (\ref{liuprod:eqn24}), we arrive at the following interesting $q$-identity:
\begin{equation}\label{liuprod:eqn25}
\frac{\bigl(q^2, q^2, qc, qc; q^2\bigr)_\infty}{\bigl(q, q, c, q^2c; q^2\bigr)_\infty}
=\sum_{n=0}^\infty \frac{\bigl(1+q^{2n+1}\bigr)}{\bigl(1-q^{2n+1}\bigr)} \frac{\bigl(q^2/c; q^2\bigr)_n c^n}{\bigl(q^2c; q^2\bigr)_n}.
\end{equation}

Putting $c=-q$ and $c=0$ respectively in the above equation, we find the following two generating functions for the sums of two triangular numbers:
\begin{equation} \label{liuprod:eqn26}
	\Biggl(\sum_{n=0}^\infty q^{n(n+1)}\Biggr)^2
	=\sum_{n=0}^\infty \frac{(-1)^n q^n}{1-q^{2n+1}}
\end{equation}
and
\begin{equation*}
	\Biggl(\sum_{n=0}^\infty q^{n(n+1)/2}\Biggr)^2
	=\sum_{n=0}^\infty (-1)^n \frac{\bigl(1+q^{2n+1}\bigr) q^{n^2+n}}{\bigl(1-q^{2n+1}\bigr)}.
\end{equation*}
Identity (\ref{liuprod:eqn26}) can be found in \cite[p.~397, Entry 18.2.4]{AndrewsBerndt2005}, and can also be found in
\cite[Theorem~3.8]{Liu2020HRjournal}.

Putting $c=q$ in (\ref{liuprod:eqn25}), we can arrive at the following identity for the sums of four triangular numbers, which is similar to Andrews' formula for the sums of three triangular numbers:
\begin{equation}\label{ftrinumbers}
\biggl(\sum_{n=0}^\infty q^{n(n+1)/2}\biggr)^4 =\sum_{n=0}^\infty \frac{\bigl(1+q^{2n+1}\bigr)q^n}{\bigl(1-q^{2n+1}\bigr)^2} .
\end{equation}
We can derive the Legendre formula for the sums of four triangular numbers from (\ref{ftrinumbers}) through some $q$-series manipulations \cite[Entry~18.2.5]{AndrewsBerndt2005}.

In their interesting paper, Chen and Wang \cite[equation~(3.43)]{ChenWang2020} prove that
\begin{equation}\label{cw:eqn1}
{_3\phi_2}\biggl({{q^{-n}, q^{n+1}, q}\atop{q^{3/2}, -q^{3/2}}}; q, q\biggr)
=\biggl(\frac{1-q}{1-q^{2n+1}}\biggr) q^{(n^2+3n)/2}
\sum_{j=-n}^n q^{-j(j+1)/2}.
\end{equation}
Let us choose $a=q^{1/2}$, $b=-q^{1/2}$ and $\beta=-1$ in (\ref{liuprod:eqn15}) and use (\ref{cw:eqn1}). Then we deduce that for~${|c|<1}$,
\begin{equation*}
\frac{(q; q)_\infty^2 \bigl(qc^2; q^2\bigr)_\infty}{(c, qc; q)_\infty \bigl(q; q^2\bigr)_\infty}
=\sum_{n=0}^\infty \sum_{j=-n}^n \frac{(q/c; q)_n c^n}{(qc; q)_{n}}
q^{n(n+1)/2-j(j+1)/2}.
\end{equation*}
When $c=0$, the above equation becomes
\begin{equation*}
	\frac{(q; q)_\infty^2 }{\bigl(q; q^2\bigr)_\infty}
	=\sum_{n=0}^\infty \sum_{j=-n}^n {(-1)^n} q^{n(n+1)-j(j+1)/2}.
\end{equation*}

\section{The proof of Theorem~\ref{trianHecke}}\label{sec5}
\subsection[An application of the Sears \_3phi\_2 transformation]{An application of the Sears $\boldsymbol{_3\phi_2}$ transformation}
The Sears $_3\phi_2$ transformation formula states that (see, for example, \cite[Theorem~3]{Liu2003})
\begin{align*}
	{_3\phi_2}\biggl({{a_1, a_2, a_3}\atop{b_1, b_2}}; q, \frac{b_1b_2}{a_1a_2a_3}\biggr)
	=\frac{(b_2/a_3, b_1b_2/{a_1 a_2}; q)_\infty}
	{(b_2, b_1b_2/{a_1a_2a_3}; q)_\infty}
	{_3\phi_2}\biggl({{b_1/a_1, b_1/a_2, a_3}\atop{b_1, b_1b_2/{a_1a_2}}}; q, \frac{b_2}{a_3}\biggr).	
\end{align*}	
By making use of the above equation, we immediately conclude that
\begin{align*}
	&{_3\phi_2} \biggl({{q/c, \alpha ab/qu, \alpha ab/q}\atop{\alpha \beta ab/q, \alpha \gamma ab/q}}; q, \beta \gamma cu/q \biggr)\\
	&\qquad{}=\frac{\bigl(\alpha \gamma abc/q^2, \beta \gamma u; q\bigr)_\infty}
	{(\alpha \gamma ab/q, \beta \gamma cu/q; q)_\infty}
	{_3\phi_2} \biggl({{q/c, \beta u, \beta}\atop{\alpha \beta ab/q, \beta \gamma u}}; q, \frac{\alpha \gamma abc}{q^2}\biggr).
\end{align*}
Combining this equation with Corollary~\ref{liuthmc}, we are led to the following theorem.
\begin{Theorem}\label{liuthmd}
	For $\max\bigl\{\bigl|\alpha \beta abc/q^2\bigr|, \bigl|\alpha \gamma abc/q^2\bigr| \bigr\}<1$, we have
	\begin{align*}
		&\sum_{n=0}^\infty \frac{\bigl(1-\alpha q^{2n}\bigr)(\alpha, q/a, q/b, q/c; q)_n}{(1-\alpha)(q, \alpha a, \alpha b, \alpha c; q)_n}\biggl(\frac{\alpha abc}{q^2}\biggr)^n {_4\phi_3}\biggl({{q^{-n}, \alpha q^n, \beta, \gamma}
			\atop{q/a, q/b, \beta \gamma u}}; q, q\biggr)\\
		&\qquad{}=\frac{(q\alpha, \alpha ac/q, \alpha bc/q, \alpha \beta ab/q; q )_\infty}{\bigl(\alpha a, \alpha b, \alpha c, \alpha \beta abc/q^2; q \bigr)_\infty} {_3\phi_2} \biggl({{q/c, \beta u, \beta}\atop{\alpha \beta ab/q, \beta \gamma u}}; q, \frac{\alpha \gamma abc}{q^2}\biggr).
	\end{align*}
\end{Theorem}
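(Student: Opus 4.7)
The plan is to derive Theorem~\ref{liuthmd} directly from Corollary~\ref{liuthmc} by invoking the Sears ${_3\phi_2}$ transformation formula on the residual ${_3\phi_2}$ appearing on the right-hand side of the corollary. Inspecting that series, the upper parameters $\{q/c,\alpha ab/(qu),\alpha ab/q\}$, lower parameters $\{\alpha\beta ab/q,\alpha\gamma ab/q\}$ and argument $\beta\gamma cu/q$ are related exactly as in the hypothesis of Sears, since $\beta\gamma cu/q = b_1 b_2/(a_1 a_2 a_3)$ for any assignment of these five quantities to the Sears parameters. This observation is the only nontrivial conceptual input; the rest is bookkeeping.

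The central choice is to take $a_1=\alpha ab/(qu)$, $a_2=\alpha ab/q$, $a_3=q/c$, $b_1=\alpha\beta ab/q$, $b_2=\alpha\gamma ab/q$. With this identification one verifies $b_2/a_3=\alpha\gamma abc/q^2$, $b_1/a_1=\beta u$, $b_1/a_2=\beta$ and $b_1b_2/(a_1a_2)=\beta\gamma u$, producing precisely the ${_3\phi_2}$ that appears on the right-hand side of Theorem~\ref{liuthmd}. Other assignments (for instance promoting $q/c$ to $a_1$ or $a_2$) give an equivalent but cosmetically different identity, so part of the task is simply pinning down the assignment that matches the desired target.

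Substituting back, the Sears transformation multiplies the inner ${_3\phi_2}$ of Corollary~\ref{liuthmc} by the infinite-product ratio $(\alpha\gamma abc/q^2,\beta\gamma u;q)_\infty/(\alpha\gamma ab/q,\beta\gamma cu/q;q)_\infty$. Merging this ratio with the prefactor of Corollary~\ref{liuthmc}, the factors $(\alpha\gamma ab/q;q)_\infty$ and $(\beta\gamma cu/q;q)_\infty$ in the original numerator and $(\alpha\gamma abc/q^2;q)_\infty$, $(\beta\gamma u;q)_\infty$ in the original denominator each cancel exactly once, leaving the prefactor $(q\alpha,\alpha ac/q,\alpha bc/q,\alpha\beta ab/q;q)_\infty/(\alpha a,\alpha b,\alpha c,\alpha\beta abc/q^2;q)_\infty$ asserted in the theorem. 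The left-hand side, independent of the Sears manipulation, carries over intact, and the convergence condition $\max\{|\alpha\beta abc/q^2|,|\alpha\gamma abc/q^2|\}<1$ is inherited unchanged from Corollary~\ref{liuthmc}.

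I do not anticipate a serious obstacle: the proof is essentially an algebraic identity, and the only place to stumble is keeping the four cancelling $q$-shifted factorials straight. A brief table matching the six Pochhammer factors present before the substitution against the two new ones introduced by Sears should confirm the final prefactor in a line or two.
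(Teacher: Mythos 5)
Your proposal is correct and is essentially the paper's own proof: the paper also applies the Sears ${}_3\phi_2$ transformation to the residual ${}_3\phi_2$ on the right-hand side of Corollary~\ref{liuthmc}, with exactly the parameter identification you give (producing the ratio $(\alpha\gamma abc/q^2,\beta\gamma u;q)_\infty/(\alpha\gamma ab/q,\beta\gamma cu/q;q)_\infty$), and the same four-factor cancellation yields the stated prefactor.
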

Theorem~\ref{liuthmd} includes the following well-known result \cite[equation~(3.8.9)]{Gas+Rah} and \cite[Theorem~1.8]{Liu2013IJTN} as a special case, from which one can derive the Rogers--Ramanujan identities.
\begin{Proposition}\label{WRR} For $|\alpha ac/q|<1,$ we have the $q$-transformation formula
	\begin{align*}
		&\frac{(q\alpha, \alpha ac/q; q )_\infty}{(\alpha a, \alpha c; q )_\infty} {_3\phi_2} \biggl({{q/a, q/c, \alpha bd/q}\atop{\alpha b, \alpha d}}; q, \frac{\alpha ac}{q}\biggr)\\
		&\qquad{}=\sum_{n=0}^\infty \frac{\bigl(1-\alpha q^{2n}\bigr)(\alpha, q/a, q/b, q/c, q/d; q)_n}{(1-\alpha)(q, \alpha a, \alpha b, \alpha c, \alpha d; q)_n}\biggl(\frac{-\alpha^2 abcd}{q^2}\biggr)^n q^{n(n-1)/2}.
	\end{align*}	
\end{Proposition}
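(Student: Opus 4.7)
The plan is to obtain Proposition~\ref{WRR} as a direct specialization of Theorem~\ref{liuthmd}, with the five free parameters $a,b,c,d,\alpha$ of WRR matching those of liuthmd after we fix $\beta$, $\gamma$, $u$ appropriately. Scanning the right-hand side of Theorem~\ref{liuthmd}, one sees that the prefactor
\[
\frac{(q\alpha,\alpha ac/q,\alpha bc/q,\alpha\beta ab/q;q)_\infty}{(\alpha a,\alpha b,\alpha c,\alpha\beta abc/q^2;q)_\infty}
\]
collapses to the WRR prefactor $(q\alpha,\alpha ac/q;q)_\infty/(\alpha a,\alpha c;q)_\infty$ precisely when $\beta=q/a$, because then $\alpha\beta ab/q=\alpha b$ and $\alpha\beta abc/q^2=\alpha bc/q$, producing two pairwise cancellations. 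With this choice the argument on the right is $\alpha\gamma abc/q^2$; equating it to $\alpha ac/q$ forces $\gamma=q/b$. Finally, reading off the numerator and denominator of the surviving ${_3\phi_2}$ one sees that matching $\{q/c,\beta u,\beta\}=\{q/a,q/c,\alpha bd/q\}$ over $\{\alpha\beta ab/q,\beta\gamma u\}=\{\alpha b,\alpha d\}$ (the second entry of the bottom being $\beta\gamma u=(q^2/ab)u=\alpha d$) forces $u=\alpha abd/q^2$. These three specializations are the entire content of the plan.

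With $(\beta,\gamma,u)=(q/a,\,q/b,\,\alpha abd/q^2)$, the right-hand side of Theorem~\ref{liuthmd} becomes exactly the left-hand side of WRR after the cancellations above. For the left-hand side of Theorem~\ref{liuthmd}, the ${_4\phi_3}$ has parameters $q/a$ and $q/b$ appearing simultaneously in the numerator $\{q^{-n},\alpha q^n,q/a,q/b\}$ and denominator $\{q/a,q/b,(q/a)(q/b)u\}$, so both pairs cancel; using $(q/a)(q/b)u=q^2u/(ab)=\alpha d$, it collapses to
\[
{_2\phi_1}\!\biggl({q^{-n},\alpha q^n\atop \alpha d};q,q\biggr),
\]
which by the $q$-Chu--Vandermonde summation \eqref{qchu-van} equals $(q/d;q)_n(-\alpha d)^nq^{n(n-1)/2}/(\alpha d;q)_n$. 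Absorbing this factor into the outer sum of Theorem~\ref{liuthmd}, the prefactor $(\alpha abc/q^2)^n\cdot(-\alpha d)^n q^{n(n-1)/2}$ combines to $(-\alpha^2abcd/q^2)^nq^{n(n-1)/2}$, and an extra $(q/d;q)_n/(\alpha d;q)_n$ slots neatly into the existing $q$-shifted factorial product, yielding exactly the right-hand side of WRR.

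Thus the proof reduces to: (i) make the substitution $(\beta,\gamma,u)=(q/a,q/b,\alpha abd/q^2)$ in Theorem~\ref{liuthmd}; (ii) on the right, perform the two pairwise cancellations in the infinite-product prefactor and read off the matching ${_3\phi_2}$; (iii) on the left, cancel the common $q/a,q/b$ between top and bottom of the ${_4\phi_3}$ and apply \eqref{qchu-van} to the resulting ${_2\phi_1}$; (iv) regroup the resulting factors. The convergence hypothesis $|\alpha ac/q|<1$ matches the condition $|\alpha\gamma abc/q^2|<1$ from Theorem~\ref{liuthmd} under our substitution, while the companion condition $|\alpha bc/q|<1$ (i.e.\ $|\alpha\beta abc/q^2|<1$) is superfluous for WRR since the series on the right of WRR converges for all $b$ thanks to the Gaussian factor $q^{n(n-1)/2}$, allowing analytic continuation. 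There is no genuine obstacle; the only step requiring care is the bookkeeping to verify that both the prefactor cancellations and the ${_3\phi_2}$ parameter match are compatible with the single choice $u=\alpha abd/q^2$, which the plan above has already confirmed.
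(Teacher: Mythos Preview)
Your proof is correct and follows exactly the paper's approach: specialize $(\beta,\gamma,u)=(q/a,\,q/b,\,\alpha abd/q^2)$ in Theorem~\ref{liuthmd}, collapse the ${_4\phi_3}$ to a ${_2\phi_1}$ via the obvious cancellations, and evaluate it by the $q$-Chu--Vandermonde summation \eqref{qchu-van}. Your brief discussion of the convergence condition and analytic continuation is a welcome clarification that the paper omits.
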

\begin{proof}
	Letting $\beta=q/a$, $\gamma=q/b$ and $u=\alpha abd/{q^2}$ in Theorem~\ref{liuthmd}, we immediately deduce that
	\begin{align*}
		&\sum_{n=0}^\infty \frac{\bigl(1-\alpha q^{2n}\bigr)(\alpha, q/a, q/b, q/c; q)_n}{(1-\alpha)(q, \alpha a, \alpha b, \alpha c; q)_n}\biggl(\frac{\alpha abc}{q^2}\biggr)^n {_2\phi_1}\biggl({{q^{-n}, \alpha q^n}
			\atop{\alpha d}}; q, q\biggr)\\
		&\qquad{}=\frac{(q\alpha, \alpha ac/q; q )_\infty}{(\alpha a, \alpha c; q )_\infty} {_3\phi_2} \biggl({{q/a, q/c, \alpha bd/q}\atop{\alpha b, \alpha d}}; q, \frac{\alpha ac}{q}\biggr).
	\end{align*}
	Using the $q$-Chu--Vandermonde summation formula in (\ref{qchu-van}), we deduce that
	\[
	{_2\phi_1}\biggl({{q^{-n}, \alpha q^n}
		\atop{\alpha d}}; q, q\biggr)=\frac{(q/d; q)_n}{(\alpha d; q)_n}
	(-\alpha d)^n q^{n(n-1)/2}.
	\]
	Combining the above two equations completes the proof of Proposition~\ref{WRR}.
\end{proof}

\subsection{The proof of Theorem~\ref{trianHecke}}
\begin{proof}
Setting $\alpha=q$, $u=at$ and $\beta=q/a$ in Theorem~\ref{liuthmd}, we immediately deduce that
\begin{align}
	&\sum_{n=0}^\infty\bigl(1-q^{2n+1}\bigr) \frac{(q/a, q/b, q/c; q)_n}{(qa, qb, qc; q)_n} \biggl(\frac{abc}{q}\biggr)^n {_3\phi_2}\biggl({{q^{-n}, q^{n+1}, \gamma}\atop{q/b, q\gamma t}}; q, q\biggr)\nonumber\\
	&\qquad{}=\frac{(q, ac; q)_\infty}{(qa, qc; q)_\infty}
	{_3\phi_2}\biggl({{q/a, q/c, qt}\atop{qb, q\gamma t}}; q, \frac{abc\gamma}{q}\biggr).\label{trih:eqn2}	
\end{align}	
Replacing $q$ by $q^2$ and then putting $t=0$, $b=q^{-1}$, $\gamma=q^2$ in the resulting equation gives 		
	\begin{align*}
		&	\sum_{n=0}^\infty \bigl(1-q^{4n+2}\bigr) \frac{\bigl(q^2/a, q^2/c; q^2\bigr)_n}{\bigl(q^2a, q^2c; q^2\bigr)_n} \biggl(\frac{ac}{q^3}\biggr)^n
		 \biggl(\frac{1-q^{2n+1}}{1-q}\biggr){_3\phi_2}\biggl({{q^{-2n}, q^{2n+2}, q^2}\atop{q^3, 0}}; q^2, q^2\biggr)\\
		&\qquad{}=\frac{\bigl(q^2, ac; q^2\bigr)_\infty}{\bigl(q^2a, q^2c; q^2\bigr)_\infty}
		{_2\phi_1}\biggl({{q^2/a, q^2/c}\atop{q}}; q^2, \frac{ac}{q}\biggr).
	\end{align*}
	Substituting (\ref{spvalue:eqn7}) into the equation above and simplifying, we complete the proof of Theorem~\ref{trianHecke}.
\end{proof}

By making use of (\ref{trih:eqn2}), we can also prove the following theorem.
\begin{Theorem}\label{mortrianHecke} We have
\begin{align*}
&\sum_{n=0}^\infty \sum_{j=-n}^n (-1)^j \bigl(1-q^{2n+1}\bigr)q^{j^2-n} \frac{\bigl(q^2/a, q^2/c; q^2\bigr)_n (ac)^n}{\bigl(q^2 a, q^2 c; q^2\bigr)_n}\\
&\qquad{}=\frac{\bigl(q^2, ca; q^2\bigr)_\infty}{\bigl(q^2 a, q^2 c; q^2\bigr)_\infty}
\sum_{n=0}^\infty \frac{\bigl(q^2/a, q^2/c; q^2\bigr)_n (ac/q)^n}{(-q; q)_{2n} \bigl(1+q^{2n+1}\bigr)}.\nonumber
\end{align*}	
\end{Theorem}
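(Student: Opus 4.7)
My plan is to mimic the proof of Theorem~\ref{trianHecke}: specialize equation~(\ref{trih:eqn2}) under $q\mapsto q^2$ so that the inner ${_3\phi_2}$ on the left collapses to a Hecke-type theta sum. The natural candidate for such a collapse is the evaluation~(\ref{newvalue:eqn5}), whose left-hand side is a ${_3\phi_2}\bigl({q^{-2n},q^{2n+2},-1\atop -q,-q^2};q^2,q^2\bigr)$, and this suggests trying $b=-q$, $\gamma=-1$, $t=1$ in (\ref{trih:eqn2}) after the substitution.

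With these parameters, the lower entries of the left-hand inner ${_3\phi_2}$ become $q^2/b=-q$ and $q^2\gamma t=-q^2$, so by (\ref{newvalue:eqn5}) it equals $(-1)^n\sum_{j=-n}^n(-1)^j q^{j^2}$. Simultaneously, the outer factor $(abc/q^2)^n$ becomes $(-1)^n(ac/q)^n$, and the ratio
\[
\frac{(q^2/b;q^2)_n}{(q^2 b;q^2)_n}=\frac{(-q;q^2)_n}{(-q^3;q^2)_n}=\frac{1+q}{1+q^{2n+1}}
\]
telescopes as shown. After combining these and factoring $1-q^{4n+2}=(1-q^{2n+1})(1+q^{2n+1})$, the two copies of $(-1)^n$ cancel, the $(1+q^{2n+1})$ cancels, and the left-hand side of the specialized (\ref{trih:eqn2}) reduces to $(1+q)$ times the left-hand side of Theorem~\ref{mortrianHecke}.

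For the right-hand side of (\ref{trih:eqn2}), the same substitutions produce a ${_3\phi_2}$ with upper parameters $q^2/a,\ q^2/c,\ q^2$ and lower parameters $-q^3,\ -q^2$, evaluated at $ac/q$. The third upper parameter $q^2$ cancels the $(q^2;q^2)_n$ in the series denominator. The only nontrivial step is the rewriting
\[
(-q^2,-q^3;q^2)_n=\frac{(-q;q)_{2n}(1+q^{2n+1})}{1+q},
\]
which follows from the splitting $(-q;q)_{2n}=(-q^2;q^2)_n(-q;q^2)_n$ together with $(-q;q^2)_n(1+q^{2n+1})=(1+q)(-q^3;q^2)_n$. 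Inserting this produces $(1+q)$ times the claimed right-hand side, and dividing both sides of the resulting identity by $(1+q)$ concludes the proof.

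The main obstacle is purely bookkeeping: locating the parameter choice $(b,\gamma,t)=(-q,-1,1)$ that simultaneously triggers (\ref{newvalue:eqn5}) on the left and leaves a tractable ${_3\phi_2}$ on the right, and then keeping the various $(-1)^n$ and $(1+q)$ factors aligned so that they cancel consistently. No genuinely new $q$-series manipulation beyond (\ref{trih:eqn2}) and (\ref{newvalue:eqn5}) is needed.
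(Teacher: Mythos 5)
Your proposal is correct and coincides with the paper's own proof: the paper likewise replaces $q$ by $q^2$ in (\ref{trih:eqn2}), sets $b=-q$, $t=1$, $\gamma=-1$, and then substitutes (\ref{newvalue:eqn5}), with the same $(1+q)$ and $(-1)^n$ bookkeeping you describe. The rewriting $(-q^2,-q^3;q^2)_n=(-q;q)_{2n}\bigl(1+q^{2n+1}\bigr)/(1+q)$ is exactly the simplification the paper leaves implicit.
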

\begin{proof} Replacing $q$ by $q^2$ in (\ref{trih:eqn2}) and then putting $b=-q$, $t=1$ and $\gamma=-1$ in the resulting equation, it is found that
\begin{align*}
	&(1+q)\sum_{n=0}^\infty(1-q^{2n+1}) \frac{\bigl(q^2/a, q^2/c; q^2\bigr)_n}{\bigl(q^2a, q^2 c; q^2\bigr)_n} \biggl(-\frac{ac}{q}\biggr)^n {_3\phi_2}\biggl({{q^{-2n}, q^{2n+2}, -1}\atop{-q, -q^2}}; q^2, q^2\biggr)\\
	&\qquad{}=\frac{\bigl(q^2, ac; q^2\bigr)_\infty}{\bigl(q^2a, q^2c; q^2\bigr)_\infty}
	{_3\phi_2}\biggl({{q^2/a, q^2/c, q^2}\atop{-q^3, {-q^2}}}; q^2, \frac{ac}{q}\biggr).\nonumber
\end{align*}		
Substituting (\ref{newvalue:eqn5}) into the equation above, we complete the proof of Theorem~\ref{mortrianHecke}.
\end{proof}

Letting $a=c=0$ in Theorem~\ref{mortrianHecke}, we immediately arrive at the identity
\begin{equation*}
	\sum_{n=0}^\infty \frac{q^{2n^2+n}}{\bigl(1+q^{2n+1}\bigr) (-q; q)_{2n}}
	=\frac{1}{\bigl(q^2; q^2\bigr)_\infty}\sum_{n=0}^\infty \sum_{j=-n}^n (-1)^j
	\bigl(1-q^{2n+1}\bigr)q^{2n^2+j^2+n}.
\end{equation*}

By taking $(a, c)=(1, 0)$ and $(a, c)=(-1, 0)$ respectively in Theorem~\ref{mortrianHecke}, we find that
\begin{equation*}
	\sum_{n=0}^\infty \frac{(-1)^n \bigl(q^2; q^2\bigr)_n q^{n^2}}{\bigl(1+q^{2n+1}\bigr) (-q; q)_{2n}}
	=\sum_{n=0}^\infty \sum_{j=-n}^n (-1)^{n+j}
	\bigl(1-q^{2n+1}\bigr)q^{n^2+j^2}
\end{equation*}
and
\begin{equation*}
	\sum_{n=0}^\infty \frac{ q^{n^2}}{\bigl(1+q^{2n+1}\bigr) \bigl(-q; q^2\bigr)_{n}}
	=\frac{\bigl(-q^2; q^2\bigr)_\infty}{\bigl(q^2; q^2\bigr)_\infty}\sum_{n=0}^\infty \sum_{j=-n}^n (-1)^{j}
	\bigl(1-q^{2n+1}\bigr)q^{n^2+j^2}.
	\end{equation*}

Setting $a=q$ and $c=0$ in Theorem~\ref{mortrianHecke} and simplifying, we deduce that
\begin{equation*}
\sum_{n=0}^\infty (-1)^n \frac{\bigl(q; q^2\bigr)_n q^{n(n+1)}}{\bigl(1+q^{2n+1}\bigr)(-q; q)_{2n}}
=\frac{\bigl(q; q^2\bigr)_\infty}{\bigl(q^2; q^2\bigr)_\infty}
\sum_{n=0}^{\infty}\sum_{j=-n}^{n}(-1)^{n+j} q^{n^2+n+j^2}.
\end{equation*}

\section[Some applications of Corollary~\ref{liuthmc} to Rogers--Hecke type series]{Some applications of Corollary~\ref{liuthmc} \\ to Rogers--Hecke type series}\label{sec6}

In this section, we will discuss the application of Corollary~\ref{liuthmc} to Rogers--Hecke type series.
\subsection{Application of Corollary~\ref{liuthmc}. Part I}
The first main result of this section is the following theorem.
\begin{Theorem}\label{andrrthm1} For $|a|<1$ and $|c|<1$, we have
	\begin{align*} 
		&\sum_{n=0}^\infty \frac{(q/c; q)_n c^n}{\bigl(q^2; q^2\bigr)_n (1+aq^n)} q^{n(n+1)/2}\\
		&\qquad{}=\frac{(qa, qc, -ac; q)_\infty}{(q, ac, -a; q)_{\infty}}
		\sum_{n=0}^\infty \sum_{j=-n}^n (-1)^j \bigl(1-q^{2n+1}\bigr) q^{n^2-j^2}
		\frac{(q/a, q/c; q)_n (ac)^n}{(qa, qc; q)_n}. \nonumber
	\end{align*}	
\end{Theorem}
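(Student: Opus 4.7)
The plan is to specialize Corollary~\ref{liuthmc} by taking $\alpha = q$, $\beta = q$, $\gamma = q/a$, $b = -1$, and then letting $u \to 0$. With these choices the top parameter $\gamma = q/a$ cancels the bottom parameter $q/a$ of the inner ${_4\phi_3}$, and the bottom parameter $\beta\gamma u = q^2 u/a$ tends to $0$, so the ${_4\phi_3}$ reduces to ${_3\phi_2}(q^{-n}, q^{n+1}, q; -q, 0; q, q)$, which by~(\ref{spvalue:eqn3}) equals $(-1)^n q^{n^2+n} \sum_{j=-n}^n (-1)^j q^{-j^2}$. Because $b = -1$ makes $(q/b;q)_n/(qb;q)_n = 1$, while $(abc/q)^n = (-1)^n (ac/q)^n$ supplies a second $(-1)^n$ that cancels the first, and the remaining powers combine as $(ac/q)^n q^{n^2+n} = (ac)^n q^{n^2}$, the entire left-hand side of Corollary~\ref{liuthmc} collapses to $\frac{1}{1-q}$ times the Hecke-type double sum on the right-hand side of the theorem.

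The more delicate step is the $u \to 0$ limit of the ${_3\phi_2}$ on the right-hand side of Corollary~\ref{liuthmc}: its top parameter $\alpha ab/(qu) = -a/u$ diverges while its argument $\beta\gamma cu/q = qcu/a$ shrinks proportionally to $u$. Using the leading asymptotic $(-a/u;q)_k \sim (a/u)^k q^{k(k-1)/2}$ and cancelling $u^{-k}$ against $u^k$ term by term, the limiting $k$th term becomes $(q/c, -a;q)_k c^k q^{k(k+1)/2}/(q, -qa, -q;q)_k$. Then $(q, -q;q)_k = (q^2;q^2)_k$ together with the identity $(-a;q)_k/(-qa;q)_k = (1+a)/(1+aq^k)$ turns this into $(1+a)$ times the left-hand side of the theorem. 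The main obstacle lies in this asymptotic bookkeeping, ensuring that the $k$ powers of $u$ cancel while the correct power of $q$ emerges so that both the denominator $(q^2;q^2)_k$ and the factor $1/(1+aq^k)$ appear.

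Simultaneously, the infinite-product prefactor on the right-hand side of Corollary~\ref{liuthmc} sheds its $u$-dependent factors $(qcu/a;q)_\infty$ and $(q^2u/a;q)_\infty$ as $u \to 0$, while the common $(-q;q)_\infty$ and $(-c;q)_\infty$ factors cancel between numerator and denominator, leaving $(q^2, ac, -qa;q)_\infty/(qa, qc, -ac;q)_\infty$. Equating the two sides of Corollary~\ref{liuthmc} and solving for the left-hand side of the theorem, one divides the Hecke double sum by $(1-q)(1+a) \cdot (q^2, ac, -qa;q)_\infty/(qa, qc, -ac;q)_\infty$; the identities $(1-q)(q^2;q)_\infty = (q;q)_\infty$ and $(1+a)(-qa;q)_\infty = (-a;q)_\infty$ collapse this factor to the asserted prefactor $(qa, qc, -ac;q)_\infty/(q, ac, -a;q)_\infty$, finishing the proof.
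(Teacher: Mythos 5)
Your proof is correct and takes essentially the same route as the paper: the paper likewise specializes Corollary~\ref{liuthmc} with $\gamma=q/a$ and $u=0$ (precisely the $u\to 0$ limit you work out), then $\alpha=\beta=q$, $b=-1$, and evaluates the inner terminating series by \eqref{spvalue:eqn3}. The only cosmetic difference is the order of operations — the paper first records the $u=0$ case as the intermediate identity \eqref{andram:eqn2} and then substitutes $\alpha=\beta=q$, $b=-1$, whereas you make all substitutions at once and carry out the $u\to 0$ asymptotics explicitly.
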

\begin{proof}
Setting $u=0$, $\gamma=q/a$ in Corollary~\ref{liuthmc}, we immediately find that for $\max\{|\alpha \beta abc/q^2|,\allowbreak |\alpha bc/q^2|\}<1$,
\begin{align}
	&\sum_{n=0}^\infty \frac{\bigl(1-\alpha q^{2n}\bigr)(\alpha, q/a, q/b, q/c; q)_n}{(1-\alpha)(q, \alpha a, \alpha b, \alpha c; q)_n}\biggl(\frac{\alpha abc}{q^2}\biggr)^n
	{_3\phi_2}\biggl({{q^{-n}, \alpha q^n, \beta}
		\atop{0, q/b}}; q, q\biggr)\nonumber\\
	&\qquad{}=\frac{(q\alpha, \alpha \beta ab/q, \alpha ac/q; q )_\infty}{\bigl(\alpha a,\alpha c, \alpha \beta abc/q^2; q \bigr)_\infty}
	\sum_{n=0}^\infty \frac{(q/c, \alpha ab/q; q)_n}{(q, \alpha b, \alpha \beta ab/q; q)_n}\biggl(-\frac{\alpha \beta bc}{q}\biggr)^n q^{n(n-1)/2}.\label{andram:eqn2}
\end{align}
Setting $\alpha=\beta=q$ and $b=-1$ in (\ref{andram:eqn2}), we deduce that
	\begin{align*}
		&\sum_{n=0}^\infty \frac{\bigl(1-q^{2n+1}\bigr)(q/a, q/c; q)_n}{(qa, q c; q)_n}\biggl(-\frac{ac}{q}\biggr)^n
		{_3\phi_2}\biggl({{q^{-n}, q^{n+1}, q}
			\atop{0, -q}}; q, q\biggr)\\
		&\qquad{}=\frac{(q, -a, ac; q )_\infty}{(qa, qc, -ac; q )_\infty}
		\sum_{n=0}^\infty \frac{(q/c; q)_n c^n}{\bigl(q^2; q^2\bigr)_n (1+aq^n)} q^{n(n+1)/2}.\nonumber
	\end{align*}	
	Substituting (\ref{spvalue:eqn3}) into the left-hand side of the equation above, we complete the proof of Theorem~\ref{andrrthm1}.
\end{proof}

Putting $a=c=0$ in Theorem~\ref{andrrthm1}, we arrive at the following identity of Rogers--Hecke type:
\begin{equation*}
	\sum_{n=0}^\infty (-1)^n \frac{q^{n^2+n}}{\bigl(q^2; q^2\bigr)_n}
	=\frac{1}{(q; q)_\infty} \sum_{n=0}^\infty \sum_{j=-n}^n (-1)^j \bigl(1-q^{2n+1}\bigr) q^{2n^2+n-j^2}.
\end{equation*}

If we specialize Theorem~\ref{andrrthm1} to the case when $c=1$, we conclude that
\begin{equation}\label{andram:eqn4}
	\sum_{n=0}^\infty \frac{q^{n(n+1)/2}}{(-q; q)_n (1+aq^n)}
	=\sum_{n=0}^\infty \sum_{j=-n}^n (-1)^j \bigl(1-q^{2n+1}\bigr) q^{n^2-j^2} \frac{(q/a; q)_n a^n}{(a; q)_{n+1}}.
\end{equation}

Letting $a=0$ in the above equation, we immediately obtain the Andrews--Dyson--Hickerson identity \cite{ADH1988}
\begin{equation*}
	\sum_{n=0}^\infty \frac{q^{n(n+1)/2}}{(-q; q)_n }
	=\sum_{n=0}^\infty \sum_{j=-n}^n (-1)^{n+j} \bigl(1-q^{2n+1}\bigr) q^{(3n^2+n)/2-j^2}.
\end{equation*}

For any nonnegative integer $m$, taking $a=q^{m+1}$ in (\ref{andram:eqn4}) and noting that $(q^{-m}; q)_n=0$ for~${n>m}$,
we can get (\ref{finitemock}).

Replacing $q$ by $q^2$ in (\ref{andram:eqn4}) and then setting $a=q$ and $a=-q$ respectively in the resulting equation, we deduce that
\begin{equation}\label{andram:eqn6}
	\sum_{n=0}^\infty \frac{q^{n(n+1)}}{\bigl(-q^2; q^2\bigr)_n \bigl(1+q^{2n+1}\bigr)}
	=\sum_{n=0}^\infty \sum_{j=-n}^n (-1)^{j} \bigl(1+q^{2n+1}\bigr) q^{2n^2+n-2j^2}
\end{equation}
and
\begin{equation*}
	\sum_{n=0}^\infty \frac{q^{n(n+1)}}{\bigl(-q^2; q^2\bigr)_n \bigl(1-q^{2n+1}\bigr)}
	=\sum_{n=0}^\infty \sum_{j=-n}^n (-1)^{n+j} \bigl(1-q^{2n+1}\bigr) q^{2n^2+n-2j^2}.
\end{equation*}
It should be pointed out that (\ref{andram:eqn6}) is different from the following identity of \cite[equation~(2.8)]{WangChern2019} due to Wang and Chern:
\begin{equation*}
	\sum_{n=0}^\infty \frac{q^{n(n+1)}}{\bigl(-q^2; q^2\bigr)_n \bigl(1+q^{2n+1}\bigr)}
	=\sum_{n=0}^\infty \sum_{j=-n}^n (-1)^{n+j} \bigl(1-q^{2n+1}\bigr) q^{2n^2+n-j^2}.
\end{equation*}

If we specialize Theorem~\ref{andrrthm1} to the case when $c=-1$, we deduce that
\begin{align}
	&\sum_{n=0}^\infty (-1)^n \frac{q^{n(n+1)/2}}{(q; q)_n (1+aq^n)}\nonumber\\
	&\qquad{}=\frac{(-q, a, a; q)_\infty}{(q, -a, -a; q)_\infty)_\infty}\sum_{n=0}^\infty \sum_{j=-n}^n (-1)^{n+j} \bigl(1-q^{2n+1}\bigr) q^{n^2-j^2} \frac{(q/a; q)_n a^n}{(a; q)_{n+1}}.\label{andram:eqn8}
\end{align}

The special case when $a=0$ of the above equation gives the third identity in \cite[Proposition~1.11]{Liu2013IJTN}
\begin{equation*}
	\sum_{n=0}^\infty (-1)^n\frac{q^{n(n+1)/2}}{(q; q)_n}
	=\frac{(q; q)_\infty}{(-q; q)_\infty}\sum_{n=0}^\infty \sum_{j=-n}^n (-1)^{j} \bigl(1-q^{2n+1}\bigr) q^{(3n^2+n)/2-j^2}.
\end{equation*}

Putting $a=q$ in (\ref{andram:eqn8}) and noting that $(1; q)_n=\delta_{n0}$, we find that
\begin{equation*}
\sum_{n=0}^\infty (-1)^n \frac{q^{n(n+1)/2}}{(q; q)_n \bigl(1+q^{n+1}\bigr)}
=\frac{(q; q)_\infty}{(-q; q)_\infty}.
\end{equation*}

Replacing $q$ by $q^2$ in (\ref{andram:eqn8}) then setting $a=q$ and $a=-q$ respectively in the resulting equation yields
\begin{align*}
	\sum_{n=0}^\infty \frac{(-1)^n q^{n(n+1)}}{\bigl(q^2; q^2\bigr)_n \bigl(1+q^{2n+1}\bigr)}
	=\frac{\bigl(q, q, -q^2; q^2\bigr)_\infty}{\bigl(-q, -q, q^2; q^2\bigr)_\infty}\sum_{n=0}^\infty \sum_{j=-n}^n (-1)^{n+j} \bigl(1+q^{2n+1}\bigr) q^{2n^2+n-2j^2}
\end{align*}
and
\begin{align*}
\sum_{n=0}^\infty \frac{(-1)^n q^{n(n+1)}}{\bigl(q^2; q^2\bigr)_n \bigl(1-q^{2n+1}\bigr)}
=\frac{\bigl(-q, -q, -q^2; q^2\bigr)_\infty}{\bigl(q, q, q^2; q^2\bigr)_\infty}\sum_{n=0}^\infty \sum_{j=-n}^n (-1)^{j} \bigl(1-q^{2n+1}\bigr) q^{2n^2+n-2j^2}.
\end{align*}

Putting $(a, c)=(1, 0)$ in Theorem~\ref{andrrthm1}, we arrive at the following identity of Rogers--Hecke type:
\begin{align*}
	\sum_{n=0}^\infty \frac{(-1)^n q^{n^2+n}}{\bigl(q^2; q^2\bigr)_n (1+q^n)}
	=\frac{1}{2(-q; q)_\infty}
	\sum_{n=0}^\infty \sum_{j=-n}^n(-1)^{n+j} \bigl(1-q^{2n+1}\bigr) q^{(3n^2+n)/2-j^2}.
\end{align*}

Letting $\alpha \to 1$, $\beta=q$ and $a=c=0$ in (\ref{andram:eqn2}) and simplifying, we conclude that
\begin{align}\label{andram:eqn14}
&(q; q)_\infty \sum_{n=0}^{\infty} \frac{q^{n^2} b^n}{(q, b; q)_n}
=1+\sum_{n=1}^{\infty}(1+q^n)\frac{(q/b; q)_n b^n}{(b; q)_n} q^{n(n-1)}~{_3\phi_2}\biggl({{q^{-n}, q^n, q}
		\atop{0, q/b}}; q, q\biggr).
\end{align}
Letting $b\to \infty$ in the equation above and then using (\ref{spvalue:eqn6}) in the resulting equation, we find~that%
\begin{equation*}
\sum_{n=0}^{\infty} (-1)^n \frac{q^{n(n+1)/2}}{(q; q)_n}
=\frac{1}{(q; q)_\infty}\sum_{n=0}^{\infty}\sum_{j=-n}^{n} (-1)^j \bigl(1-q^{2n+1}\bigr) q^{2n^2+n-(3j^2+j)/2}.
\end{equation*}
Replacing $q$ by $q^2$ in (\ref{andram:eqn14}) and then using \cite[equation~(5.33)]{ChenWang2020} in the resulting equation,
we can arrive at (\ref{trinumbers:eqn2}).

\subsection{Application of Corollary~\ref{liuthmc}. Part II}
The second main result of this section is the following theorem.
\begin{Theorem}\label{andrrthm2} We have
\begin{align*}
&\sum_{n=0}^\infty \sum_{j=0}^{2n} \bigl(1-q^{4n+2}\bigr)\frac{\bigl(q^2/a, q^2/c; q^2\bigr)_n (ac)^n}{\bigl(q^2 a, q^2c; q^2\bigr)_n} q^{2n^2-j(j+1)/2}\\
&\qquad{}=\frac{(1-a)\bigl(q^2; q^2\bigr)_\infty}{\bigl(1-ac/q^2\bigr)\bigl(q^2 c; q^2\bigr)_\infty}
\sum_{n=0}^\infty \frac{\bigl(q^2/c, a/q; q^2\bigr)_n (-c)^n}{(q; q)_{2n} \bigl(a; q^2\bigr)_n}q^{n(n-1)}.\nonumber
\end{align*}
\end{Theorem}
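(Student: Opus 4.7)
The plan is to obtain Theorem~\ref{andrrthm2} by specializing Corollary~\ref{liuthmc} after replacing $q$ by $q^2$, in close parallel to the proof of Theorem~\ref{andrrthm1}. Specifically, in the $q^2$-version of Corollary~\ref{liuthmc} I would choose $\alpha=q^2$, $\beta=q^2/a$, $\gamma=q^2$, $b=q^{-1}$, and then pass to the confluent limit $u\to 0$. The role of $\beta=q^2/a$ is to cancel the third upper parameter of the inner ${_4\phi_3}$ against the first lower parameter $q^2/a$; the remaining choices of $\alpha,\gamma,b$ arrange the reduced ${_3\phi_2}$ into exactly ${_3\phi_2}\bigl({q^{-2n},q^{2n+2},q^2\atop q^3,0};q^2,q^2\bigr)$, whose closed form is supplied by identity~(\ref{spvalue:eqn7}).

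Substituting (\ref{spvalue:eqn7}) contributes the inner sum $\sum_{j=0}^{2n}q^{-j(j+1)/2}$ with weight $(1-q)q^{2n^2+3n}/(1-q^{2n+1})$. Using the elementary identities $(q^3;q^2)_n/(q;q^2)_n=(1-q^{2n+1})/(1-q)$ and $1-q^{4n+2}=(1-q^{2n+1})(1+q^{2n+1})$, the $(1-q^{2n+1})$ factors cancel, the residual $q$-weight $(ac/q^3)^n q^{2n^2+3n}$ collapses to $(ac)^n q^{2n^2}$, and the left-hand side becomes $1/(1-q^2)$ times the double sum in Theorem~\ref{andrrthm2}. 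This leftover factor $1/(1-q^2)$ will be absorbed into the product prefactor on the right via $(q^4;q^2)_\infty=(q^2;q^2)_\infty/(1-q^2)$, together with the telescoping identities $(a;q^2)_\infty/(q^2a;q^2)_\infty=1-a$ and the analogous one linking $(ac;q^2)_\infty$ and $(ac/q^2;q^2)_\infty$.

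The main technical hurdle is the confluent limit $u\to 0$ inside the outer ${_3\phi_2}$ on the right of Corollary~\ref{liuthmc}: the upper parameter $\alpha ab/(q^2u)$ diverges while the argument $\beta\gamma cu/q^2$ vanishes, yet their $k$-th-order product has the finite balanced limit $(\alpha ab/(q^2u);q^2)_k\,(\beta\gamma cu/q^2)^k\to (-c)^k q^{k^2}$. After the limit the outer series becomes $\sum_n(q^2/c,a/q;q^2)_n(-c)^n q^{n^2}/[(q;q)_{2n}(qa;q^2)_n]$, which, after rewriting $q^{n^2}=q^n\cdot q^{n(n-1)}$ and regrouping the $q$-Pochhammer bookkeeping into the simpler form $(1-a)/(a;q^2)_n$, matches the series stated in Theorem~\ref{andrrthm2}. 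Executing this limit rigorously, together with tracking the simultaneous cancellations between the infinite products and the extracted $q$-powers, is the place where the bookkeeping is most delicate.
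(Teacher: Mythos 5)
Your specialization is not the one the paper uses, and the difference is exactly where your argument breaks down. The paper first sets $u=0$, $\gamma=q/a$ in Corollary~\ref{liuthmc} (this is (\ref{andram:eqn2})), then takes $\alpha=q$, replaces $q$ by $q^2$ and chooses $(b,\beta)=\bigl(q^{-1},q\bigr)$; those choices reproduce the printed right-hand side of Theorem~\ref{andrrthm2} verbatim, and the surviving inner series is ${_3\phi_2}\bigl({{q^{-2n},q^{2n+2},q}\atop{q^3,\,0}};q^2,q^2\bigr)$, whose extra upper parameter is $q$. Your choices (cancellation via $\beta=q^2/a$, surviving upper parameter $\gamma=q^2$) instead give ${_3\phi_2}\bigl({{q^{-2n},q^{2n+2},q^2}\atop{q^3,\,0}};q^2,q^2\bigr)$, which is the series that (\ref{spvalue:eqn7}) actually evaluates; the two terminating series are different (compare them already at $n=1$). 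Consequently your left-hand side does come out as stated, but your right-hand side does not: carrying out your confluent limit gives the weight $(-c)^kq^{k^2}$, the lower parameter $(qa;q^2)_k$ and the prefactor $\bigl(q^4,qa,ac;q^2\bigr)_\infty/\bigl[\bigl(q^2a,q^2c,ac/q;q^2\bigr)_\infty\bigr]$, so that after absorbing $1-q^2$ your route proves
\begin{align*}
&\sum_{n=0}^\infty\sum_{j=0}^{2n}\bigl(1-q^{4n+2}\bigr)\frac{\bigl(q^2/a,q^2/c;q^2\bigr)_n(ac)^n}{\bigl(q^2a,q^2c;q^2\bigr)_n}\,q^{2n^2-j(j+1)/2}\\
&\qquad{}=\frac{\bigl(q^2,qa,ac;q^2\bigr)_\infty}{\bigl(q^2a,q^2c,ac/q;q^2\bigr)_\infty}
\sum_{n=0}^\infty\frac{\bigl(q^2/c,a/q;q^2\bigr)_n(-c)^n q^{n^2}}{(q;q)_{2n}\bigl(qa;q^2\bigr)_n},
\end{align*}
which is not the printed identity.

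The final ``regrouping'' you invoke cannot close this gap: the $n$-th terms of your series and of the printed one differ by the $n$-dependent factor $q^{n}(a;q^2)_n/(qa;q^2)_n$, and the telescoping identities you cite do not apply, because the products that actually arise are $(qa;q^2)_\infty/(q^2a;q^2)_\infty$ and $(ac;q^2)_\infty/(ac/q;q^2)_\infty$, which sit on odd-shifted ladders and do not collapse to $1-a$ and $1/\bigl(1-ac/q^2\bigr)$. The two right-hand sides are genuinely different functions: at $(a,c)=(0,1)$ yours reduces to $\sum_{n\ge0}(-1)^nq^{n^2}/\bigl(q;q^2\bigr)_n$, consistent with (\ref{trinumbers:eqn3}) and with the common left-hand side, whereas the printed one reduces to $\sum_{n\ge0}(-1)^nq^{n^2-n}/\bigl(q;q^2\bigr)_n$. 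So as a proof of the statement as printed your proposal fails at exactly the step you deferred as ``delicate bookkeeping''; what it establishes is the variant displayed above. Note also that you cannot repair it by switching to the paper's choice $\beta=q$: that hits the printed right-hand side, but then (\ref{spvalue:eqn7}) no longer applies to the resulting inner ${_3\phi_2}$, so that route does not close the evaluation step either.
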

\begin{proof} Setting $\alpha=q$ in (\ref{andram:eqn2}) and then replacing $q$ by $q^2$ and finally taking $(b, \beta)=\bigl(q^{-1}, q\bigr)$, we arrive at
\begin{align*}
&\sum_{n=0}^\infty \bigl(1-q^{4n+2}\bigr) \frac{\bigl(q^2/a, q^2/c; q^2\bigr)_n \bigl(ac/q^3\bigr)^n}{\bigl(q^2 a, q^2 c; q^2\bigr)_n}
 \biggl(\frac{1-q^{2n+1}}{1-q}\biggr)
{_3\phi_2}\biggl({{q^{-2n}, q^{2n+2}, q} \atop{0, q^3}}; q^2, q^2\biggr)\nonumber\\
&\qquad{}=\frac{(1-a)\bigl(q^2; q^2\bigr)_\infty}{\bigl(1-ac/q^2\bigr)\bigl(q^2 c; q^2\bigr)_\infty}
\sum_{n=0}^\infty \frac{\bigl(q^2/c, a/q; q^2\bigr)_n (-c)^n}{(q; q)_{2n} \bigl(a; q^2\bigr)_n} q^{n(n-1)}.
\end{align*}	
Substituting (\ref{spvalue:eqn7})	into the equation above and simplifying, we complete the proof of Theorem~\ref{andrrthm2}.
\end{proof}

Putting $a=c=0$ in Theorem~\ref{andrrthm2}, we immediately arrive at the following identities of Rogers--Hecke type:
\begin{equation*}
\sum_{n=0}^\infty \frac{q^{2n^2}}{(q; q)_{2n}}
=\frac{1}{\bigl(q^2; q^2\bigr)_\infty}	
\sum_{n=0}^\infty \sum_{j=0}^{2n} \bigl(1-q^{4n+2}\bigr) q^{4n^2+2n-j(j+1)/2}.
\end{equation*}
If we specialize Theorem~\ref{andrrthm2} to the case when $a=0$ and $c=1$, we deduce that
\begin{equation*}
\sum_{n=0}^\infty(-1)^n\frac{q^{n^2-n}}{\bigl(q; q^2\bigr)_n}
=\sum_{n=0}^\infty \sum_{j=0}^{2n} (-1)^n \bigl(1-q^{4n+2}\bigr) q^{3n^2+n-j(j+1)/2}.
\end{equation*}
If we specialize Theorem~\ref{andrrthm2} to the case when $a=0$ and $c=-1$, we deduce that
\begin{equation*}
	\sum_{n=0}^\infty \frac{\bigl(-q^2; q^2\bigr)_n q^{n^2-n}}{(q; q)_{2n}}
	=\frac{\bigl(-q^2; q^2\bigr)_\infty}{\bigl(q^2; q^2\bigr)_\infty}\sum_{n=0}^\infty \sum_{j=0}^{2n} \bigl(1-q^{4n+2}\bigr) q^{3n^2+n-j(j+1)/2}.
\end{equation*}
By letting $(a, c)=(0, q)$ in Theorem~\ref{andrrthm2} and simplifying, we conclude that
\begin{equation*}
	\sum_{n=0}^\infty (-1)^n \frac{q^{n^2}}{\bigl(q^2; q^2\bigr)_n}
	=\frac{\bigl(q; q^2\bigr)_\infty}{\bigl(q^2; q^2\bigr)_\infty}\sum_{n=0}^\infty \sum_{j=0}^{2n} (-1)^n \bigl(1+q^{2n+1}\bigr) q^{3n^2+2n-j(j+1)/2}.
\end{equation*}
Upon taking $(a, c)=(0, -q)$ in Theorem~\ref{andrrthm2} and simplifying, we obtain
\begin{equation*}
	\sum_{n=0}^\infty \frac{\bigl(-q; q^2\bigr)_n q^{n^2}}{\bigl(q^2; q^2\bigr)_n}
	=\frac{\bigl(-q; q^2\bigr)_\infty}{\bigl(q^2; q^2\bigr)_\infty}\sum_{n=0}^\infty \sum_{j=0}^{2n} \bigl(1-q^{2n+1}\bigr) q^{3n^2+2n-j(j+1)/2}.
\end{equation*}
Putting $(a, c)=(-q, 0)$ in Theorem~\ref{andrrthm2} and making a simple calculation yields
\begin{equation*}
\sum_{n=0}^\infty \frac{\bigl(-1; q^2\bigr)_n q^{2n^2}}{(q; q)_{2n} \bigl(-q; q^2\bigr)_n}
=\frac{1}{\bigl(q^2; q^2\bigr)_\infty}
\sum_{n=0}^\infty \sum_{j=0}^{2n} \bigl(1+q^{2n+1}\bigr) q^{3n^2+2n-j(j+1)/2}.
\end{equation*}
\subsection{Application of Corollary~\ref{liuthmc}. Part III}
The Sears $_4\phi_3$ transformation in Proposition~\ref{Searstrs} can be restated as follows:
\begin{align*}
	&{_4\phi_3} \biggl({{q^{-n}, \alpha q^n, \beta, \gamma} \atop {c, d, q\alpha \beta \gamma/cd}} ; q, q \biggr)
	=\frac{(q\alpha/c, cd/\beta \gamma; q)_n}{(c, q\alpha \beta \gamma; q)_n} \biggl(\frac{\beta \gamma}{d}\biggr)^n
	{_4\phi_3} \biggl({{q^{-n}, \alpha q^n, d/\beta, d/\gamma} \atop {d, dc/\beta \gamma, q\alpha/c}} ; q, q \biggr).
\end{align*}
Setting $\gamma=0$ in the above equation, we immediately deduce that
\begin{align}\label{searogers:eqn1}
	{_3\phi_2} \biggl({{q^{-n}, \alpha q^n, \beta} \atop {c, d}} ; q, q \biggr)
	=(-c)^n q^{n(n-1)/2} \frac{(q\alpha/c; q)_n}{(c; q)_n}
	{_3\phi_2} \biggl({{q^{-n}, \alpha q^n, d/\beta} \atop {d, q\alpha/c}} ; q, \frac{q\beta}{c} \biggr).
\end{align}
Setting $\gamma=q/{a}$ in Corollary~\ref{liuthmc} and simplifying, it is found that
\begin{align*}
	&\sum_{n=0}^\infty \frac{\bigl(1-\alpha q^{2n}\bigr)(\alpha, q/a, q/b, q/c; q)_n}{(1-\alpha)(q, \alpha a, \alpha b, \alpha c; q)_n}\biggl(\frac{\alpha abc}{q^2}\biggr)^n
 {_3\phi_2}\biggl({{q^{-n}, \alpha q^n, \beta}
		\atop{q/b, q\beta u/a}}; q, q\biggr)\\
	&\qquad{}=\frac{(q\alpha, \alpha ac/q, \alpha \beta ab/q, \beta cu/a; q )_\infty}{\bigl(\alpha a, \alpha c, \alpha \beta abc/q^2, q\beta u/a; q \bigr)_\infty}{_3\phi_2} \biggl({{q/c, \alpha ab/qu, \alpha ab/q}\atop{\alpha \beta ab/q, \alpha b}}; q, \frac{\beta cu}{a} \biggr).\nonumber
\end{align*}
With the help of (\ref{searogers:eqn1}), we deduce that
\begin{align*}
{_3\phi_2}\biggl({{q^{-n}, \alpha q^n, \beta}
	\atop{q/b, q\beta u/a}}; q, q\biggr)
=\frac{(\alpha b; q)_n}{(q/b; q)_n} \Bigl(-\frac{q}{b}\Bigr)^n q^{n(n-1)/2} {_3\phi_2}\biggl({{q^{-n}, \alpha q^n, qu/a}
	\atop{\alpha b, q\beta u/a}}; q, \beta b\biggr).
\end{align*}
Combining the above two equations, we are led to the following $q$-transformation formula:
\begin{align*}
	&\sum_{n=0}^\infty \frac{\bigl(1-\alpha q^{2n}\bigr)(\alpha, q/a, q/c; q)_n}{(1-\alpha)(q, \alpha a, \alpha c; q)_n}\biggl(-\frac{\alpha ac}{q}\biggr)^n q^{n(n-1)/2} {_3\phi_2}\biggl({{q^{-n}, \alpha q^n, qu/a}\atop{\alpha b, q\beta u/a}}; q, \beta b\biggr)\\
	&\qquad=\frac{(q\alpha, \alpha ac/q, \alpha \beta ab/q, \beta cu/a; q )_\infty}{\bigl(\alpha a, \alpha c, \alpha \beta abc/q^2, q\beta u/a; q \bigr)_\infty}{_3\phi_2} \biggl({{q/c, \alpha ab/qu, \alpha ab/q}\atop{\alpha \beta ab/q, \alpha b}}; q, \frac{\beta cu}{a} \biggr).\nonumber
\end{align*}
Setting $\alpha=q$, $u=0$, $\beta=b=-1$ in the above equation and simplifying, we arrive at
\begin{align*}
	&\sum_{n=0}^\infty \bigl(1-q^{2n+1}\bigr)\frac{(q/a, q/c; q)_n}{(q a, q c; q)_n}(-ac)^n q^{n(n-1)/2}~ {_2\phi_1}\biggl({{q^{-n}, q^{n+1}}\atop{-q}}; q, 1\biggr)\\
	&=\frac{(1-a)(q; q )_\infty}{(1-ac/q)(qc; q )_\infty}
	\sum_{n=0}^\infty \frac{(q/c, -a; q)_n (-c)^n}{\bigl(q^2; q^2\bigr)_n (a; q)_n} q^{n(n-1)/2}.
\end{align*}
Substituting (\ref{newvalue:eqn4}), we are led to the following theorem.
\begin{Theorem}\label{andrrthm3} We have
\begin{align*} 
&\sum_{n=0}^\infty \sum_{j=-n}^n (-1)^{j}\bigl(1-q^{2n+1}\bigr) q^{j^2-n}
\frac{(q/a, q/c; q)_n}{(q a, q c; q)_n}(ac)^n \\
&\qquad{}=\frac{(1-a)(q; q )_\infty}{(1-ac/q)(qc; q )_\infty}
\sum_{n=0}^\infty \frac{(q/c, -a; q)_n (-c)^n}{\bigl(q^2; q^2\bigr)_n (a; q)_n} q^{n(n-1)/2}.\nonumber
\end{align*}
\end{Theorem}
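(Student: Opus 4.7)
The plan is to follow exactly the chain of specializations laid out in the paragraphs preceding the statement and conclude with one terminating evaluation. First I would start from Corollary~\ref{liuthmc}, in which the parameter $\gamma$ is still free, and set $\gamma=q/a$. This reduces the $_4\phi_3$ on the left-hand side to a $_3\phi_2$ and gives an intermediate transformation identity with free parameters $\alpha,\beta,a,b,c,u$.

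Next I would invoke the Sears-type reduction (\ref{searogers:eqn1}) (derived from the Sears $_4\phi_3$ transformation by putting $\gamma=0$) to rewrite the inner $_3\phi_2$
\[
{_3\phi_2}\biggl({{q^{-n},\alpha q^n,\beta}\atop{q/b,q\beta u/a}};q,q\biggr)
\]
as a multiple of a $_3\phi_2$ with argument $\beta b$. Combining this with the previous identity produces the single $q$-transformation formula already displayed in the excerpt.

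Then I would specialize $\alpha=q$, $u=0$, $\beta=b=-1$. With these choices, the right-hand $_3\phi_2$ reduces to the claimed sum $\sum_{n=0}^\infty \frac{(q/c,-a;q)_n(-c)^n}{(q^2;q^2)_n(a;q)_n}q^{n(n-1)/2}$, while the infinite products out front collapse to $(1-a)(q;q)_\infty/[(1-ac/q)(qc;q)_\infty]$. The inner series on the left becomes the terminating ${_2\phi_1}\bigl({q^{-n},q^{n+1}\atop -q};q,1\bigr)$.

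The last step, and the one where the Hecke-type double sum appears, is to replace this $_2\phi_1$ by its closed form (\ref{newvalue:eqn4}), namely $(-1)^nq^{-n(n+1)/2}\sum_{j=-n}^n(-1)^j q^{j^2}$. After distributing this evaluation inside the outer $n$-sum and combining powers of $q$, the exponents $n(n-1)/2-n(n+1)/2$ combine to $-n$, matching $q^{j^2-n}$ in the statement. I do not anticipate a genuine obstacle here: all the deep work is carried by Corollary~\ref{liuthmc}, the Sears transformation, and the evaluation (\ref{newvalue:eqn4}); the only care required is bookkeeping of $q$-shifted factorials and signs when specializing $\alpha=q$, $u=0$, $\beta=b=-1$, so that the infinite products telescope cleanly to the stated form.
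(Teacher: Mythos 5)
Your proposal reproduces the paper's own argument: setting $\gamma=q/a$ in Corollary~\ref{liuthmc}, transforming the inner ${_3\phi_2}$ via the Sears reduction (\ref{searogers:eqn1}), specializing $\alpha=q$, $u=0$, $\beta=b=-1$, and finally substituting the evaluation (\ref{newvalue:eqn4}) is exactly the route taken in Section~\ref{sec6}. The sign and exponent bookkeeping you describe (the $(-1)^n$ cancelling against $(-ac)^n$ and $q^{n(n-1)/2-n(n+1)/2}=q^{-n}$) is correct, so the proof is complete and matches the paper.
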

If we specialize Theorem~\ref{andrrthm3} to the case when $a=0$, then we find that
\begin{align}
&\sum_{n=0}^\infty \sum_{j=-n}^n (-1)^{n+j}\bigl(1-q^{2n+1}\bigr) q^{j^2+n(n-1)/2}
\frac{(q/c; q)_n}{(q c; q)_n}c^n \nonumber\\
&\qquad{}=\frac{(q; q )_\infty}{(qc; q )_\infty}
\sum_{n=0}^\infty \frac{(q/c; q)_n (-c)^n}{\bigl(q^2; q^2\bigr)_n } q^{n(n-1)/2}.
\label{searogers:eqn6}
\end{align}
Putting $c=0$ in the equation above, we arrive at the identity \cite[equation~(4.12)]{Liu2013IJTN}
\begin{equation*}
\sum_{n=0}^\infty \frac{q^{n^2}}{\bigl(q^2; q^2\bigr)_n}
=\frac{1}{(q; q)_\infty}
\sum_{n=0}^\infty \sum_{j=-n}^n (-1)^{j} \bigl(1-q^{2n+1}\bigr) q^{n^2+j^2}.
\end{equation*}
Letting $c=1$ and $c=-1$ in (\ref{searogers:eqn6}) respectively, we conclude that \cite[equation~(4.13)]{Liu2013IJTN}
\begin{equation*}
\sum_{n=0}^\infty (-1)^{n} \frac{q^{n(n-1)/2}}{(-q; q)_n}
=\sum_{n=0}^\infty \sum_{j=-n}^n (-1)^{j} \bigl(1-q^{2n+1}\bigr) q^{j^2+n(n-1)/2}
\end{equation*}
and \cite[equation~(4.14)]{Liu2013IJTN}
\begin{equation*}
	\sum_{n=0}^\infty \frac{q^{n(n-1)/2}}{(q; q)_n}
	=\frac{(-q; q)_\infty}{(q; q)_\infty}\sum_{n=0}^\infty \sum_{j=-n}^n (-1)^{j} \bigl(1-q^{2n+1}\bigr) q^{j^2+n(n-1)/2}.
\end{equation*}
Replacing $q$ by $q^2$ in (\ref{searogers:eqn6}) and then putting $c=q$ and $c=-q$ respectively in the resulting equation, we deduce that
\begin{equation*}
\sum_{n=0}^\infty (-1)^n \frac{\bigl(q; q^2\bigr)_n }{\bigl(q^4; q^4\bigr)_n} q^{n^2}
=\frac{\bigl(q; q^2\bigr)_\infty}{\bigl(q^2; q^2\bigr)_\infty}
\sum_{n=0}^\infty \sum_{j=-n}^n (-1)^{n+j} \bigl(1+q^{2j+1}\bigr) q^{2j^2+n^2}
\end{equation*}
and
\begin{equation*}
	\sum_{n=0}^\infty \frac{\bigl(-q; q^2\bigr)_n }{\bigl(q^4; q^4\bigr)_n} q^{n^2}
	=\frac{\bigl(-q; q^2\bigr)_\infty}{\bigl(q^2; q^2\bigr)_\infty}
	\sum_{n=0}^\infty \sum_{j=-n}^n (-1)^{j} \bigl(1-q^{2j+1}\bigr) q^{2j^2+n^2},
\end{equation*}

If we specialize Theorem~\ref{andrrthm3} to the case when $c=0$, we find that
\begin{align}\label{searogers:eqn12}
\sum_{n=0}^\infty \frac{(-a; q)_n q^{n^2}}{\bigl(q^2; q^2\bigr)_n (a; q)_n}
=\frac{1}{(q; q)_\infty} \sum_{n=0}^\infty \sum_{j=-n}^n (-1)^{n+j}\bigl(1-q^{2n+1}\bigr) q^{j^2+n(n-1)/2}
\frac{(q/a; q)_n }{(a; q)_{n+1}}a^n. 	
\end{align}
Letting $a=-1$ in the equation above and noting that $(1; q)_n=\delta_{n0}$, we deduce that
\begin{equation*}
2(q; q)_\infty=\sum_{n=0}^\infty \sum_{j=-n}^n (-1)^{j}\bigl(1-q^{2n+1}\bigr) q^{j^2+n(n-1)/2}.	
\end{equation*}
Replacing $q$ by $q^2$ in (\ref{searogers:eqn12}) and then putting $a=q$ and $a=-q$ respectively, we obtain that
\begin{align*}
\sum_{n=0}^\infty \frac{\bigl(-q; q^2\bigr)_n}{\bigl(q^4; q^4\bigr)_n \bigl(q; q^2\bigr)_n} q^{2n^2}
=\frac{1}{\bigl(q^2; q^2\bigr)_\infty}
\sum_{n=0}^\infty \sum_{j=-n}^n (-1)^{n+j}\bigl(1+q^{2n+1}\bigr) q^{2j^2+n^2}
\end{align*}
and
\begin{align*}
	\sum_{n=0}^\infty \frac{\bigl(q; q^2\bigr)_n q^{2n^2}}{\bigl(q^4; q^4\bigr)_n \bigl(-q; q^2\bigr)_n}
	=\frac{1}{\bigl(q^2; q^2\bigr)_\infty}
	\sum_{n=0}^\infty \sum_{j=-n}^n (-1)^{j}\bigl(1-q^{2n+1}\bigr) q^{2j^2+n^2}.
\end{align*}
Multiplying both sides of (\ref{searogers:eqn12}) by $(1-a)$ and then letting $a=1$, we deduce that
\begin{equation*}
\sum_{n=1}^\infty \frac{(-1; q)_n q^{n^2}}{(q; q)_{n-1} \bigl(q^2; q^2\bigr)_n}=\frac{1}{(q; q)_\infty} \sum_{n=0}^\infty \sum_{j=-n}^n (-1)^{n+j}\bigl(1-q^{2n+1}\bigr) q^{j^2+n(n-1)/2}.
\end{equation*}

\section{The proof of Theorem~\ref{liuheckethm}}\label{sec7}

For simplicity, we denote the finite theta function $S_n(q)$ by
\begin{equation}\label{newmock:eqn1}
S_n(q):=\sum_{j=-n}^n (-1)^j q^{-j^2}.
\end{equation}
In this section, we will prove the following theorem, which is more general than Theorem~\ref{liuheckethm}.
\begin{Theorem} \label{andmockthm}
Let $S_n(q)$ be defined by \eqref{newmock:eqn1}. Then for $\max \{|a|, |c/q|, |ac/q|\}<1 $, we have
\begin{align*}
&\sum_{n=0}^\infty \frac{(q/c; q)_n}{(-a; q)_n} \biggl(-\frac{c}{q}\biggr)^n
=\frac{(-q, a, c, -ac/q; q)_\infty}{(q, -a, -c/q, ac/q; q)_\infty}\\
	&\qquad{} \times \Biggl(2+\sum_{n=1}^\infty (1+q^n)q^{n^2-2n}\frac{(q/a, q/c; q)_n (ac)^n}{(a, c; q)_n}(q^n S_n(q)-S_{n-1}(q)) \Biggr).\nonumber
\end{align*}
\end{Theorem}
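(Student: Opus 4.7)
The plan is to derive Theorem~\ref{andmockthm} as a single specialization of Theorem~\ref{liuthmd}, using the explicit evaluation (\ref{spvalue:eqn5}) of the terminating series ${_3\phi_2}(q^{-n}, q^n, q; -q, 0; q, q)$ to generate the Hecke-type double sum on the right. Specifically, in Theorem~\ref{liuthmd} I would set $\alpha = 1,\ \beta = q,\ \gamma = q/a,\ b = -1,\ u = 0$. Under these choices the $_4\phi_3$ on the left-hand side collapses to exactly ${_3\phi_2}(q^{-n}, q^n, q; -q, 0; q, q)$: the factor $(q/a;q)_k$ produced by $\gamma = q/a$ cancels against the same factor in the denominator, while $u = 0$ renders $(\beta \gamma u; q)_k = (0; q)_k = 1$. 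Simultaneously, the $_3\phi_2$ on the right-hand side of Theorem~\ref{liuthmd} becomes ${_3\phi_2}(q/c, 0, q; -a, 0; q, -c/q)$, which after cancellation of the $(0;q)_k$ factors is the bare sum $\sum_{n \ge 0}(q/c;q)_n(-c/q)^n/(-a;q)_n$ --- precisely the left-hand side of Theorem~\ref{andmockthm}.

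Next I would substitute (\ref{spvalue:eqn5}) into the left side of Theorem~\ref{liuthmd}. The $\alpha \to 1$ indeterminacy in the prefactor resolves to $(1-\alpha q^{2n})(\alpha;q)_n/[(1-\alpha)(q;q)_n] = 1+q^n$ for $n \ge 1$ (with value $1$ at $n = 0$), and the ratio $(-q;q)_n/(-1;q)_n$ equals $(1+q^n)/2$ for $n \ge 1$. Combining these factors with the power $(\alpha abc/q^2)^n = (-ac/q^2)^n$ and the evaluation $(-1)^n q^{n^2}/(1+q^n) \cdot (q^n S_n(q) - S_{n-1}(q))$ supplied by (\ref{spvalue:eqn5}), the signs $(-1)^n$ cancel, one factor $(1+q^n)$ cancels, and the $q$-powers combine to $q^{n^2 - 2n}$. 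The left side of Theorem~\ref{liuthmd} then becomes
\begin{equation*}
\frac{1}{2}\Bigl[\, 2 + \sum_{n \ge 1}(1+q^n)\,q^{n^2-2n}\,\frac{(q/a, q/c; q)_n (ac)^n}{(a, c; q)_n}\,\bigl(q^n S_n(q) - S_{n-1}(q)\bigr)\Bigr],
\end{equation*}
i.e.\ exactly one half of the bracketed factor on the right of Theorem~\ref{andmockthm}.

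Finally, on the right side of Theorem~\ref{liuthmd} the infinite-product prefactor at these parameters equals $(q, ac/q, -c/q, -a; q)_\infty/(a, -1, c, -ac/q; q)_\infty$; using $(-1;q)_\infty = 2(-q;q)_\infty$ this becomes $(q, ac/q, -c/q, -a; q)_\infty/[2(-q;q)_\infty (a, c, -ac/q; q)_\infty]$. Equating the two sides of Theorem~\ref{liuthmd} and solving for $\sum_{n\ge 0}(q/c;q)_n(-c/q)^n/(-a;q)_n$, the factor $2$ combines with $(-q;q)_\infty$ to yield precisely the prefactor $(-q, a, c, -ac/q; q)_\infty/(q, -a, -c/q, ac/q; q)_\infty$ stated in the theorem. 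The main obstacle is the careful bookkeeping through the degenerate specializations $\alpha = 1$ and $u = 0$: in particular, tracking how the two separate sources of $(1+q^n)$ (from the $\alpha$-limit and from $(-q;q)_n/(-1;q)_n$) combine with the $1/(1+q^n)$ produced by (\ref{spvalue:eqn5}), and verifying that the resulting $q$-power is $q^{n^2-2n}$ rather than $q^{n^2-n}$.
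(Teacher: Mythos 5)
Your proposal is correct and follows essentially the same route as the paper: the paper first specializes Theorem~\ref{liuthmd} with $\alpha\to 1$, $\gamma=q/a$, $u=at$ to get the intermediate identity and then sets $b=-1$, $t=0$, $\beta=q$, which is exactly your one-step choice $\alpha\to1$, $\beta=q$, $\gamma=q/a$, $b=-1$, $u=0$, followed by the same substitution of (\ref{spvalue:eqn5}). Your bookkeeping of the two $(1+q^n)$ factors, the factor $2$ from $(-1;q)_\infty=2(-q;q)_\infty$, and the exponent $q^{n^2-2n}$ agrees with the paper's computation.
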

\begin{proof}
	Setting $\alpha \to 1$, $\gamma=q/a$ and $u=at$ in Theorem~\ref{liuthmd}, we immediately find that for $\max\bigl\{\bigl|\beta abc/q^2\bigr|, \bigl|bc/q\bigr|\bigr\}<1,$
	\begin{align}
		&1+\sum_{n=1}^\infty (1+q^n) \frac{(q/a, q/b, q/c; q)_n}{(a, b, c; q)_n} \biggl(\frac{abc}{q^2}\biggr)^n~{_3\phi_2}\biggl({{q^{-n}, q^n, \beta}\atop{q/b, q\beta t}}; q, q\biggr)\nonumber\\
		&\qquad{}=\frac{(q, ac/q, bc/q, \beta ab/q; q)_\infty}{\bigl(a, b, c, \beta abc/q^2; q\bigr)_\infty}~
		{_3\phi_2}\biggl({{q/c, \beta at, \beta}\atop{\beta ab/q, q\beta t}}; q, \frac{bc}{q}\biggr).\label{newmock:eqn3}
	\end{align}	
 When $a=q$ the left-hand side becomes $1$, and at the same time, the above equation reduces to the $q$-Gauss summation formula.

	Setting $b=-1$, $t=0$ and $\beta=q$ in the equation above, we deduce that
	\begin{align*}
\sum_{n=0}^\infty \frac{(q/c; q)_n}{(-a; q)_n} \biggl(-\frac{c}{q}\biggr)^n
		&{}=\frac{(-q, a, c, -ac/q; q)_\infty}{(q, -a, -c/q, ac/q; q)_\infty}\\
		&\quad{} \times \Biggl(2+\sum_{n=1}^\infty (1+q^n)^2 \frac{(q/a, q/c; q)_n}{(a, c; q)_n}
		\biggl(-\frac{ac}{q^2}\biggr)^n {_3\phi_2}\biggl({{q^{-n}, q^n, q}\atop{-q, 0}}; q, q\biggr)\Biggr).\nonumber
	\end{align*}	
	 Substituting (\ref{spvalue:eqn5}) into the right-hand side of the equation above, we complete the proof of Theorem~\ref{andmockthm}.
\end{proof}

Let $c=0$ in Theorem~\ref{andmockthm}, we can immediately get Theorem~\ref{liuheckethm}.

Letting $a=0$ in (\ref{liuhr:eqn1}), we immediately get the following identity:
\begin{align}\label{newmock:eqn5}
	\frac{(q; q)_\infty}{(-q; q)_\infty}
	\Biggl(\sum_{n=0}^\infty q^{n(n-1)/2}\Biggr)
	=\sum_{n=0}^\infty \sum_{j=-n}^n (-1)^{j}\bigl(1+q^n-q^{3n+1}-q^{4n+2}\bigr) q^{2n^2-j^2}.
\end{align}

Suppose that $m$ is a nonnegative integer. Taking $a=q^{m+1}$ in (\ref{liuhr:eqn2}) and simplifying, we get the following identity:
\begin{align*}
	\frac{(q; q)_m}{(-q; q)_m}\sum_{n=0}^\infty \frac{q^{n(n-1)/2}}{\bigl(-q^{m+1}; q\bigr)_n}
	&{}=2+2\sum_{n=1}^\infty (-1)^n (1+q^n) q^{n^2}\frac{(q; q)^2_m}{(q; q)_{m+n} (q; q)_{m-n}}\\
	&\quad{} +\sum_{n=1}^\infty \frac{\bigl(1-q^{2n}\bigr) (q; q)^2_m}{(q; q)_{m+n} (q; q)_{m-n}} \sum_{|j|<n} (-1)^j q^{2n^2-n-j^2}. \nonumber
\end{align*}

If we specialize (\ref{newmock:eqn3}) by taking $\beta=q$, we immediately conclude that
\begin{align}
		&1+\sum_{n=1}^\infty (1+q^n) \frac{(q/a, q/b, q/c; q)_n}{(a, b, c; q)_n} \biggl(\frac{abc}{q^2}\biggr)^n {_3\phi_2}\biggl({{q^{-n}, q^n, q}\atop{q/b, q^2 t}}; q, q\biggr)\nonumber\\
		&\qquad{}=\frac{(q, ac/q, bc/q, ab; q)_\infty}{(a, b, c, abc/q; q)_\infty}~
		\sum_{n=0}^{\infty}\frac{(q/c, qat; q)_n}{\bigl(ab, q^2t; q\bigr)_n}\biggl(\frac{bc}{q}\biggr)^n.\label{newmock:eqn7}
	\end{align}	
Setting $a=c=0$ in the equation above and simplifying, we deduce that
\begin{align}
&\sum_{n=0}^{\infty}\frac{(-b)^n}{\bigl(q^2t; q\bigr)_n} q^{n(n-1)/2}\nonumber\\
&\qquad{}=\frac{(b; q)_\infty} {(q; q)_\infty}
\Biggl(1+\sum_{n=1}^{\infty} (1+q^n) \frac{(q/b; q)_n b^n}{(b; q)_n} q^{n(n-1)}~{_3\phi_2}\biggl({{q^{-n}, q^n, q}\atop{q/b, q^2 t}}; q, q\biggr)\Biggr).
\label{newmock:eqn8}
\end{align}
Setting $b=-1$ and $t=0$ in the equation above and then using (\ref{spvalue:eqn5}) in the resulting equation, we can arrive at (\ref{newmock:eqn5}) again.

Replacing $q$ by $q^2$ in (\ref{newmock:eqn8}) and then putting $b=-q$ and $t=-q^{-2}$ and finally using (\ref{spvalue:eqn11}) in the resulting equation, we deduce that
\begin{equation*}
\sum_{n=0}^{\infty}\frac{q^{n^2}}{\bigl(-q^2; q^2\bigr)_n}=\frac{\bigl(-q; q^2\bigr)_\infty}{\bigl(q^2; q^2\bigr)_\infty}
\sum_{n=0}^\infty \sum_{j=-n}^{n} (-1)^j \bigl(1-q^{4n+2}\bigr) q^{3n^2+n-j^2}.
\end{equation*}
Replacing $q$ by $q^2$ in (\ref{newmock:eqn8}) and then putting $b=-1$ and $t=-q^{-3}$ and finally using (\ref{spvalue:eqn11}) in the resulting equation, we find that
\begin{equation*}
\sum_{n=0}^{\infty}\frac{q^{n(n-1)}}{\bigl(-q; q^2\bigr)_n}=\frac{\bigl(-q^2; q^2\bigr)_\infty}{\bigl(q^2; q^2\bigr)_\infty}
\sum_{n=0}^\infty \sum_{j=-n}^{n} (-1)^j \bigl(1+q^{2n}-q^{4n+1}-q^{6n+3}\bigr) q^{3n^2-j^2}.
\end{equation*}
Multiplying both sides of (\ref{newmock:eqn7}) by $(1-a)$ and then letting $a \to 1$ and putting $c=0$, we conclude~that
\begin{align}
		&\sum_{n=1}^\infty \bigl(1-q^{2n}\bigr) \frac{(q/b; q)_n}{(b; q)_n} (-b)^n q^{n(n-3)/2}~{_3\phi_2}\biggl({{q^{-n}, q^n, q}\atop{q/b, q^2 t}}; q, q\biggr)\nonumber\\
		&\qquad{}=
		\sum_{n=0}^{\infty}\frac{(qt; q)_n}{\bigl(b, q^2t; q\bigr)_n}(-b)^n q^{n(n-1)/2}.\label{newmock:eqn11}
	\end{align}	
Setting $t=0$ and $b=-1$ in the above equation and then using (\ref{spvalue:eqn5}) in the resulting equation, we find that
\begin{equation*}
2+\sum_{n=0}^{\infty}\frac{q^{n(n+1)/2}}{(-q; q)_n}
=\sum_{n=0}^{\infty}\sum_{j=-n}^{n}(-1)^{n+j} \bigl(1-q^{4n+2}\bigr) q^{n(3n-1)/2-j^2}.
\end{equation*}
Replacing $q$ by $q^2$ in (\ref{newmock:eqn11}) and then putting $b=-q$ and $t=-q^{-2}$ in the resulting equation and finally using
(\ref{spvalue:eqn11}), we conclude that
\begin{equation*}
1+2\sum_{n=1}^{\infty} \frac{q^{n^2}}{\bigl(1+q^{2n}\bigr)\bigl(-q; q^2\bigr)_n}
=\sum_{n=0}^{\infty}\sum_{j=-n}^{n}(-1)^{n+j} \bigl(1-q^{4n+2}\bigr) q^{2n^2-j^2}.
\end{equation*}
Replacing $q$ by $q^2$ in (\ref{newmock:eqn11}) and then putting $b=-1$ and $t=-q^{-3}$ in the resulting equation and finally using
(\ref{spvalue:eqn11}), we deduce that
\begin{align*}
&2q+(1+q)\sum_{n=0}^{\infty} \frac{q^{n(n+1)}}{\bigl(-q^2; q^2\bigr)_n \bigl(1+q^{2n+1}\bigr)}\\
&\qquad{}=\sum_{n=0}^{\infty}\sum_{j=-n}^{n} (-1)^{n+j} \bigl(q-q^{4n+1}+q^{2n}-q^{6n+4}\bigr) q^{2n^2-n-j^2}. \nonumber
\end{align*}

\section{More Rogers--Hecke type series}\label{sec8}
In this section, we will continue to discuss the application of Theorem~\ref{liuthmd} to Rogers--Hecke type series.
\subsection{More on Rogers--Hecke type series. Part I}
\begin{Theorem}\label{rogersheck:n1} For $|ac|<1$, the following double $q$-formulas holds:
\begin{align}
&\sum_{n=0}^\infty \bigl(1-q^{2n+1}\bigr) \frac{\bigl(q^2/a, q^2/c; q^2\bigr)_n}{\bigl(q^2 a, q^2c; q^2\bigr)_n} (ac)^n \sum_{j=-n}^n (-1)^j q^{n^2-j^2}\nonumber\\
&\qquad{}=\frac{\bigl(q^2, ac; q^2\bigr)_\infty}{\bigl(q^2 a, q^2 c; q^2\bigr)_\infty}	
\sum_{n=0}^\infty \frac{\bigl(q^2/a, q^2/c; q^2\bigr)_n}{\bigl(1+q^{2n+1}\bigr)\bigl(q^4; q^4\bigr)_n} (-ac)^n.\label{rht:eqn1}	
\end{align}
\end{Theorem}
\begin{proof}
Setting $\alpha=q$, $u=at$ and $\beta=q/a$ in Theorem~\ref{liuthmd}, we immediately deduce that
\begin{align}
	&\sum_{n=0}^\infty\bigl(1-q^{2n+1}\bigr) \frac{(q/a, q/b, q/c; q)_n}{(qa, qb, qc; q)_n} \biggl(\frac{abc}{q}\biggr)^n {_3\phi_2}\biggl({{q^{-n}, q^{n+1}, \gamma}\atop{q/b, q\gamma t}}; q, q\biggr)\nonumber\\
	&\qquad{}=\frac{(q, ac; q)_\infty}{(qa, qc; q)_\infty}
	{_3\phi_2}\biggl({{q/a, q/c, qt}\atop{qb, q\gamma t}}; q, \frac{abc\gamma}{q}\biggr).\label{rht:eqn2}	
\end{align}		
	
Replacing $q$ by $q^2$ in (\ref{rht:eqn2}) and then setting $\gamma=q$, $b=-q$, $t=-1/q$ in the resulting equation and finally dividing both sides by $(1+q)$, we get that
\begin{align}	
&\sum_{n=0}^\infty \bigl(1-q^{2n+1}\bigr)	\frac{\bigl(q^2/a, q^2/c; q^2\bigr)_n}{\bigl(q^2a, q^2 c; q^2\bigr)_n}\biggl(-\frac{ac}{q}\biggr)^n {_3\phi_2}\biggl({{q^{-2n}, q^{2n+2}, q}\atop{-q, -q^2}}; q^2, q^2\biggr)\nonumber\\
&\qquad{}=\frac{\bigl(q^2, ac; q^2\bigr)_\infty}{\bigl(q^2a, q^2c; q^2\bigr)_\infty}
\sum_{n=0}^\infty \frac{\bigl(q^2/a, q^2/c; q^2\bigr)_n}{\bigl(1+q^{2n+1}\bigr)\bigl(q^4; q^4\bigr)_n} (-ac)^n.\label{rht:eqn3}
\end{align}
Letting $\alpha \to 1$ in Theorem~\ref{spwwthm} and then replacing $q$ by $q^2$ and finally setting $c=-q$, $d=-q^2$, we deduce that \cite[equation~(6.15)]{Andrews2012}, \cite[equation~(4.5)]{Liu2013IJTN}
\begin{align}\label{rht:eqn4}	
	{_3\phi_2}\left({{q^{-2n}, q^{2n+2}, q }\atop{-q, -q^2}}; q^2, q^2\right)=(-1)^n q^{n^2+n}\sum_{j=-n}^n (-1)^j q^{-j^2}.
\end{align}
Substituting (\ref{rht:eqn4}) into the left-hand side of (\ref{rht:eqn3}), we arrive at (\ref{rht:eqn1}). We complete the proof of Theorem~\ref{rogersheck:n1}.
\end{proof}

Many identities of Rogers--Hecke type can be derived from this theorem; we present only a~few of them here.
		
Setting $c=0$ in (\ref{rht:eqn1}) and then putting $a=1$ and $a=-1$, respectively, in the resulting equation, we immediately find that
\begin{align*}
&\sum_{n=0}^\infty \frac{q^{n^2+n}}{\bigl(1+q^{2n+1}\bigr)\bigl(-q^2; q^2\bigr)_n}
=\sum_{n=0}^\infty \sum_{j=-n}^n (-1)^{n+j} \bigl(1-q^{2n+1}\bigr) q^{2n^2+n-j^2}
\end{align*}
and
\begin{align*}
&\sum_{n=0}^\infty \frac{(-1)^n q^{n^2+n}}{\bigl(1+q^{2n+1}\bigr)\bigl(q^2; q^2\bigr)_n}
=\frac{\bigl(-q^2; q^2\bigr)_\infty}{\bigl(q^2; q^2\bigr)_\infty}\sum_{n=0}^\infty \sum_{j=-n}^n (-1)^{j} \bigl(1-q^{2n+1}\bigr) q^{2n^2+n-j^2}. 	
\end{align*}

If we specialize Theorem~\ref{rogersheck:n1} to the case when $a=c=0$, then we conclude that
\begin{align*}
	\sum_{n=0}^\infty \frac{(-1)^n q^{2n^2+2n}}{\bigl(1+q^{2n+1}\bigr) \bigl(q^4; q^4\bigr)_n}
	=\frac{1}{\bigl(q^2; q^2\bigr)_\infty}\sum_{n=0}^\infty \sum_{j=-n}^n (-1)^j
	\bigl(1-q^{2n+1}\bigr) q^{3n^2+2n-j^2}.
\end{align*}

Taking $c=0$ in (\ref{rht:eqn1}) and then putting $a=1/q$ and $a=-1/q$ respectively in the resulting equation, we conclude that
\begin{align*}
\sum_{n=0}^\infty \frac{\bigl(q; q^2\bigr)_{n+1} q^{n^2}}{\bigl(1+q^{2n+1}\bigr)\bigl(q^4; q^4\bigr)_n}
=\frac{\bigl(q; q^2\bigr)_\infty}{\bigl(q^2; q^2\bigr)_\infty}
\sum_{n=0}^\infty \sum_{j=-n}^n (-1)^{n+j} \bigl(1-q^{2n+1}\bigr)^2 q^{2n^2-j^2}
\end{align*}
and
\begin{align*}
\sum_{n=0}^\infty \frac{(-1)^n\bigl(-q; q^2\bigr)_n q^{n^2}}{\bigl(q^4; q^4\bigr)_n}
=\frac{\bigl(-q; q^2\bigr)_\infty}{\bigl(q^2; q^2\bigr)_\infty}
\sum_{n=0}^\infty \sum_{j=-n}^n (-1)^{j} \bigl(1-q^{4n+2}\bigr) q^{2n^2-j^2}.
\end{align*}
Setting $(a, c)=(1, q)$ in (\ref{rht:eqn1}) and then replacing $q$ by $-q$ in the resulting equation, we find that~\cite[equation~(2.10)]{WangChern2019}
\begin{align*}
\sum_{n=0}^\infty \frac{\bigl(-q; q^2\bigr)_n q^n }{\bigl(1-q^{2n+1}\bigr) \bigl(-q^2; q^2\bigr)_n}=\sum_{n=0}^\infty \sum_{j=-n}^n q^{n^2+n-j^2}.
\end{align*}
Using the same method as the derivation of Theorem~\ref{rogersheck:n1}, we can prove the following theorem.
\begin{Theorem}
For $|ac|<q$, we have
\begin{align}
	&\sum_{n=0}^\infty \bigl(1-q^{4n+2}\bigr) \frac{\bigl(q^2/a, q^2/c; q^2\bigr)_n}{\bigl(q^2 a, q^2c; q^2\bigr)_n} (ac)^n \sum_{j=-n}^n (-1)^j q^{n^2-n-j^2}\nonumber\\
	&\qquad{}=\frac{2\bigl(q^2, ac; q^2\bigr)_\infty}{\bigl(q^2 a, q^2 c; q^2\bigr)_\infty}	\sum_{n=0}^\infty \frac{\bigl(q^2/a, q^2/c; q^2\bigr)_n}{\bigl(1+q^{2n}\bigr)\bigl(-q, q^2; q^2\bigr)_n }\biggl(-\frac{ac}{q}\biggr)^n.\label{rht:eqn11}	
\end{align}
\end{Theorem}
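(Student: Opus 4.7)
Following the derivation of Theorem~\ref{rogersheck:n1}, my plan is to specialize the master identity~\eqref{rht:eqn2} after the substitution $q\mapsto q^2$, choosing the free parameters $b,\gamma,t$ so that the inner ${}_3\phi_2(q^{-2n},q^{2n+2},\gamma;q^2/b,q^2\gamma t;q^2,q^2)$ on the left evaluates in closed form, and so that the remaining prefactor supplies precisely the shift of the theta exponent required by~\eqref{rht:eqn11}. The only available evaluation yielding $S_n(q):=\sum_{j=-n}^n(-1)^j q^{-j^2}$ in this setting is~\eqref{rht:eqn4}, whose lower parameter pair is $\{-q,-q^2\}$ and whose top parameter is~$q$. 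Since a ${}_3\phi_2$ is symmetric in its denominator parameters, this forces $\gamma=q$ together with $\{q^2/b,q^2\gamma t\}=\{-q,-q^2\}$; among the options the one that further produces a compensating $q^{-2n}$ in the prefactor, shifting $q^{n^2+n}$ of~\eqref{rht:eqn4} down to $q^{n^2-n}$ of~\eqref{rht:eqn11}, is $b=-1$, $t=-q^{-2}$.

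With these choices, the Pochhammer ratio $\frac{(q^2/b;q^2)_n}{(q^2b;q^2)_n}$ becomes $1$ and the coefficient $(abc/q^2)^n$ becomes $(-1)^n q^{-2n}(ac)^n$. Multiplying by the value $(-1)^n q^{n^2+n}S_n(q)$ of the inner sum from~\eqref{rht:eqn4}, the two sign factors cancel and the product collapses to $(ac)^n q^{n^2-n}S_n(q)=(ac)^n\sum_{j=-n}^n(-1)^j q^{n^2-n-j^2}$, reproducing the left-hand side of~\eqref{rht:eqn11} summand by summand. On the right, the outer ${}_3\phi_2$ in~\eqref{rht:eqn2} turns into
\[
\frac{(q^2,ac;q^2)_\infty}{(q^2a,q^2c;q^2)_\infty}\;{}_3\phi_2\!\left({q^2/a,\,q^2/c,\,-1\atop -q^2,\,-q};\,q^2,\,-\tfrac{ac}{q}\right).
\]
The only remaining simplification is the identity $\frac{(-1;q^2)_n}{(-q^2;q^2)_n}=\frac{2}{1+q^{2n}}$, valid for all $n\ge 0$ because $(-1;q^2)_n=2(-q^2;q^2)_{n-1}$ when $n\ge 1$. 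This converts the summand to $\frac{2(q^2/a,q^2/c;q^2)_n}{(1+q^{2n})(-q,q^2;q^2)_n}(-ac/q)^n$, matching the right-hand side of~\eqref{rht:eqn11}. The hypothesis $|ac|<q$ is precisely the absolute-convergence condition for the outer ${}_3\phi_2$ in the variable $z=-ac/q$.

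The only non-routine step in this proof is the selection of $(b,\gamma,t)$: the choice $b=-1$ simultaneously makes the left-hand Pochhammer ratio trivial, produces the $q^{-2n}$ that corrects the theta exponent, and creates the Pochhammer symbol $(-1;q^2)_n$ on the right, whose degeneration to $2(-q^2;q^2)_{n-1}$ is the source of the factor $2$ in the final identity. Once this choice has been identified, the entire argument reduces to substitution and algebraic simplification.
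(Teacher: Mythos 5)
Your proposal is correct and is exactly the route the paper intends: the paper proves this theorem only by the remark ``using the same method as the derivation of Theorem~\ref{rogersheck:n1}'', i.e., specializing \eqref{rht:eqn2} with $q\mapsto q^2$ and invoking the evaluation \eqref{rht:eqn4}, and your choice $\gamma=q$, $b=-1$, $t=-q^{-2}$ (with the cancellation $(-q^2;q^2)_n/(-q^2;q^2)_n=1$ on the left and $(-1;q^2)_n/(-q^2;q^2)_n=2/(1+q^{2n})$ on the right) fills in precisely the omitted details and reproduces \eqref{rht:eqn11} verbatim. The only point worth a passing remark is convergence: the direct specialization also presupposes $|c|<1$ from the hypotheses behind \eqref{rht:eqn2}, and the stated condition $|ac|<q$ alone is then reached by analytic continuation, a routine step the paper likewise leaves implicit.
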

Putting $a=c=0$ in (\ref{rht:eqn11}), we are led to the following identity of Rogers--Hecke type:
\begin{equation*}
\sum_{n=0}^\infty \frac{(-1)^n q^{2n^2+n}}{\bigl(1+q^{2n}\bigr)\bigl(-q, q^2; q^2\bigr)_n}
=\frac{1}{\bigl(q^2; q^2\bigr)_\infty}
\sum_{n=0}^\infty \sum_{j=-n}^n (-1)^j q^{3n^2+n-j^2}.
\end{equation*}

Taking $a=1$ and $c=0$ in (\ref{rht:eqn11}), we arrive at the following identity, which is equivalent to \cite[equation~(2.6)]{WangChern2019}:
\begin{equation*}
1+2\sum_{n=1}^\infty \frac{q^{n^2}}{\bigl(1+q^{2n}\bigr)\bigl(-q; q^2\bigr)_n}
=\sum_{n=0}^\infty \sum_{j=-n}^n (-1)^{n+j} \bigl(1-q^{4n+2}\bigr) q^{2n^2-j^2}.
\end{equation*}

Upon taking $a=q$ and $c=-q$ in (\ref{rht:eqn11}) and simplifying, we conclude that
\begin{equation*}
2\sum_{n=0}^\infty \frac{\bigl(q; q^2\bigr)_n q^n}{\bigl(1+q^{2n}\bigr)\bigl(q^2; q^2\bigr)_n}
=\frac{\bigl(q^2; q^4\bigr)_\infty}{\bigl(q^4; q^4\bigr)_\infty}
\sum_{n=0}^\infty \sum_{j=-n}^n (-1)^{n+j} q^{n^2+n-j^2}.
\end{equation*}	

\subsection{More Rogers--Hecke type series. Part II}
 The following theorem is the corrected version of \cite[Theorem~4.10]{Liu2013IJTN}.
\begin{Theorem}\label{eulerthm} For $|c|<1$, we have
	\begin{align}
		&\sum_{n=0}^\infty \frac{(q/a, q/c; q)_n (ac/q)^n}{\bigl(q^2; q^2\bigr)_n}\nonumber\\
		&\qquad{}=\frac{(qa, qc; q)_\infty} {(q, ac; q)_\infty}
		\sum_{n=0}^\infty \sum_{j=-n}^n (-1)^j \bigl(1-q^{2n+1}\bigr) q^{j^2-n}
		\frac{(q/a, q/c; q)_n}{(qa, qc; q)_n}(ac)^n.\label{euler:eqn1}
	\end{align}	
\end{Theorem}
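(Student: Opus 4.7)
The plan is to exploit equation~(\ref{rht:eqn2})---which arose from setting $\alpha=q$, $u=at$, $\beta=q/a$ in Theorem~\ref{liuthmd}---and specialize its remaining free parameters to $b=-1$, $\gamma=-1$, $t=0$. This is the counterpart, for the present theorem, of the parameter choices that yielded Theorems~\ref{rogersheck:n1} and~\ref{andrrthm3}, but now tuned so that the terminating evaluation~(\ref{newvalue:eqn3}) can be invoked directly in place of~(\ref{newvalue:eqn4}).

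On the left of~(\ref{rht:eqn2}), setting $b=-1$ makes $(q/b;q)_n$ and $(qb;q)_n$ both equal $(-q;q)_n$, so they cancel, and $(abc/q)^n$ becomes $(-ac/q)^n$. With $\gamma=-1$ and $t=0$ the inner $_3\phi_2$ reduces to ${}_3\phi_2(q^{-n},q^{n+1},-1;-q,0;q,q)$, which by~(\ref{newvalue:eqn3}) equals $(-1)^n\sum_{j=-n}^n(-1)^j q^{j^2}$. The two $(-1)^n$ factors cancel, and pulling $q^{-n}$ out of $(ac/q)^n$ into the Hecke sum turns the whole left-hand side of~(\ref{rht:eqn2}) into the double series $\sum_{n=0}^\infty\sum_{j=-n}^n(-1)^j(1-q^{2n+1})q^{j^2-n}(q/a,q/c;q)_n(ac)^n/(qa,qc;q)_n$---exactly the double sum appearing on the right of Theorem~\ref{eulerthm}, sans prefactor.

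On the right of~(\ref{rht:eqn2}) the same specialization kills $qt$ and $q\gamma t$ and reduces $abc\gamma/q$ to $ac/q$, so the $_3\phi_2$ collapses to $\sum_k(q/a,q/c;q)_k(ac/q)^k/(q,-q;q)_k$, which via $(q,-q;q)_k=(q^2;q^2)_k$ is exactly the series $\sum_{n=0}^\infty(q/a,q/c;q)_n(ac/q)^n/(q^2;q^2)_n$ on the left of Theorem~\ref{eulerthm}. Multiplying the resulting equality by $(qa,qc;q)_\infty/(q,ac;q)_\infty$ puts the product factor on the Hecke side and yields the theorem. The only subtle point is the sign accounting---ensuring that the $(-1)^n$ from $(-ac/q)^n$ and the $(-1)^n$ from~(\ref{newvalue:eqn3}) combine to leave $(-1)^j$ rather than $(-1)^{n+j}$---which is precisely what distinguishes this specialization from the one used in the proof of Theorem~\ref{andrrthm3}, where the evaluation~(\ref{newvalue:eqn4}) was invoked instead.
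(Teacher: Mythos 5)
Your proposal is correct and coincides with the paper's own proof: the paper likewise takes the $\alpha=q$, $u=at$, $\beta=q/a$ specialization of Theorem~\ref{liuthmd} (the identity recorded as~(\ref{rht:eqn2})), sets $\gamma=-1$, $t=0$, $b=-1$, and substitutes~(\ref{newvalue:eqn3}), with the cancellation of $(-q;q)_n$ and the collapse $(q,-q;q)_k=(q^2;q^2)_k$ exactly as you describe. Your sign bookkeeping leading to $(-1)^j$ rather than $(-1)^{n+j}$ is also right.
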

\begin{proof}
Setting $\alpha=q$, $u=at$ and $\beta=q/a$ in Theorem~\ref{liuthmd}, we immediately deduce that
\begin{align*}
	&\sum_{n=0}^\infty\bigl(1-q^{2n+1}\bigr) \frac{(q/a, q/b, q/c; q)_n}{(qa, qb, qc; q)_n} \biggl(\frac{abc}{q}\biggr)^n {_3\phi_2}\biggl({{q^{-n}, q^{n+1}, \gamma}\atop{q/b, q\gamma t}}; q, q\biggr)\\
	&\qquad{}=\frac{(q, ac; q)_\infty}{(qa, qc; q)_\infty}
	{_3\phi_2}\biggl({{q/a, q/c, qt}\atop{qb, q\gamma t}}; q, \frac{abc\gamma}{q}\biggr).\nonumber
\end{align*}	
Choosing $\gamma=-1$, $t=0$ and $b=-1$ in the equation above, we conclude that
\begin{align*}
	&\sum_{n=0}^\infty (1-q^{2n+1})\frac{(q/a, q/c; q)_n}{(qa, qc; q)_n}\biggl(-\frac{ ac}{q}\biggr)^n {_3\phi_2}\biggl({{q^{-n}, q^{n+1}, -1}\atop{0, -q}}; q, q\biggr)\nonumber\\
	&\qquad{}=\frac{(q, ac; q)_\infty}{(qa, qc; q)_\infty} {_2\phi_1} \biggl({{q/a, q/c}\atop{-q}}; q, \frac{ac}{q} \biggr).
\end{align*}
Substituting (\ref{newvalue:eqn3}) into the equation above, we complete the proof of Theorem~\ref{eulerthm}.
\end{proof}

When $a=c=0$, Theorem~\ref{eulerthm} immediately reduces to the following identity:
\begin{equation*}
	\sum_{n=0}^\infty \frac {q^{n^2}}{\bigl(q^2; q^2\bigr)_n}
	=\frac{1}{(q; q)_\infty}
	\sum_{n=0}^\infty \sum_{j=-n}^n (-1)^j \bigl(1-q^{2n+1}\bigr) q^{j^2+n^2}.
\end{equation*}	
Putting $(a, c)=(1, 0)$ and $(a, c)=(-1, 0)$ in Theorem~\ref{eulerthm} respectively, we conclude that
\begin{equation*}
	\sum_{n=0}^\infty (-1)^n \frac{q^{n(n-1)/2}}{(-q; q)_n}
	=\sum_{n=0}^\infty \sum_{j=-n}^n (-1)^{n+j} \bigl(1-q^{2n+1}\bigr) q^{j^2+n(n-1)/2}
\end{equation*}
and
\begin{equation*}
	\sum_{n=0}^\infty \frac{q^{n(n-1)/2}}{(-q; q)_n}
	=\frac{(-q; q)_\infty}{(q; q)_\infty}\sum_{n=0}^\infty \sum_{j=-n}^n (-1)^{j} \bigl(1-q^{2n+1}\bigr) q^{j^2+n(n-1)/2}.
\end{equation*}

\begin{Theorem}\label{liuoldthm} For $|ac/q|<1$, we have
	\begin{align*}
		&\frac{\bigl(q^2, ac; q^2\bigr)_\infty}{\bigl(q^2 a, q^2 c; q^2\bigr)_\infty}
		\sum_{n=0}^\infty \frac{\bigl(q^2/a, q^2/c; q^2\bigr)_n (ac/q)^n}{(q; q)_{2n} (1+q^{2n+1})}\\
		&\qquad{}=\sum_{n=0}^\infty \sum_{j=-n}^n \bigl(1-q^{2n+1}\bigr) q^{n^2-j^2-j}
		\frac{\bigl(q^2/a, q^2/c; q^2\bigr)_n (ac)^n}{\bigl(q^2 a, q^2 c; q^2\bigr)_n}.\nonumber
	\end{align*}
\end{Theorem}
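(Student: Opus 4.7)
The plan is to specialize the master identity~(\ref{rht:eqn2}) (obtained in the proof of Theorem~\ref{rogersheck:n1} by taking $\alpha=q$, $u=at$, $\beta=q/a$ in Theorem~\ref{liuthmd}) and then evaluate the inner terminating ${}_3\phi_2$ on its left-hand side in closed form using the Chen--Wang formula~(\ref{cw:eqn1}). The structure of the target identity strongly suggests this route: the Hecke-type double sum of shape $q^{n^2-j^2-j}$ on the right arises precisely from $\sum_{j=-n}^{n}q^{-j(j+1)}$, which is the $q\mapsto q^2$ form of the sum appearing in~(\ref{cw:eqn1}), while the denominator $(q;q)_{2n}(1+q^{2n+1})=(q,q^2;q^2)_n(1+q^{2n+1})$ on the left comes from a Pochhammer contraction involving~$(-q^3;q^2)_n$.

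Concretely, I would first replace $q$ by $q^2$ in~(\ref{rht:eqn2}) and then specialize with $b=q^{-1}$, $\gamma=q^2$, $t=-q^{-1}$. With these choices the inner ${}_3\phi_2$ on the left becomes
\[
{}_3\phi_2 \biggl({{q^{-2n},\,q^{2n+2},\,q^2}\atop{q^3,\,-q^3}}; q^2,\,q^2 \biggr),
\]
which by the $q\mapsto q^2$ specialization of~(\ref{cw:eqn1}) equals $\frac{1-q^2}{1-q^{4n+2}}\,q^{n^2+3n}\sum_{j=-n}^{n}q^{-j(j+1)}$. On the left of~(\ref{rht:eqn2}) the Pochhammer ratio $(q^3;q^2)_n/(q;q^2)_n$ simplifies to $(1-q^{2n+1})/(1-q)$, which together with the factor $(1-q^{4n+2})/(1-q^2)$ and the power identity $(ac/q^3)^n q^{n^2+3n}=(ac)^n q^{n^2}$ leaves the left-hand side equal to $(1+q)$ times the double sum on the right of Theorem~\ref{liuoldthm}. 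On the right of~(\ref{rht:eqn2}), the same substitutions yield
\[
\frac{(q^2,ac;q^2)_\infty}{(q^2a,q^2c;q^2)_\infty}\;{}_3\phi_2 \biggl({{q^2/a,\,q^2/c,\,-q}\atop{q,\,-q^3}}; q^2,\,\frac{ac}{q} \biggr),
\]
and the ratio $(-q;q^2)_n/(-q^3;q^2)_n=(1+q)/(1+q^{2n+1})$ together with the identity $(q^2;q^2)_n(q;q^2)_n=(q;q)_{2n}$ produces exactly the single sum on the left of Theorem~\ref{liuoldthm}, again multiplied by $(1+q)$. Cancelling the common factor $(1+q)$ completes the proof.

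The main obstacle will be purely bookkeeping: carefully tracking the two Pochhammer contractions $(q^3;q^2)_n/(q;q^2)_n=(1-q^{2n+1})/(1-q)$ and $(-q;q^2)_n/(-q^3;q^2)_n=(1+q)/(1+q^{2n+1})$, the cancellation of $(1-q^{4n+2})$ against its reciprocal on the left, and the emergence of the compensating $(1+q)$ on both sides so that it can be cleanly removed at the end. Once the correct specializations $(b,\gamma,t)=(q^{-1},q^2,-q^{-1})$ are fixed, the identification with Theorem~\ref{liuoldthm} is automatic; the convergence hypothesis $|ac/q|<1$ is exactly what is needed to guarantee absolute convergence of the sum on the right-hand side of~(\ref{rht:eqn2}) under these specializations.
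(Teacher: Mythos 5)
Your proposal is correct and is essentially the paper's own proof: the paper specializes the same master identity (its equation~(5.2), identical to the one you cite) with $(b,\gamma,t)=\bigl(q^{-1/2},q,-q^{-1/2}\bigr)$, applies the Chen--Wang evaluation~(4.13), and then replaces $q$ by $q^2$, which is exactly your argument with the substitution $q\mapsto q^2$ performed before rather than after the specialization. Your bookkeeping of the Pochhammer contractions and the cancellation of the factor $1+q$ is accurate, so no gap remains.
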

\begin{proof} Setting $b=q^{-1/2}$, $\gamma=q$ and $t=-q^{-1/2}$ in (\ref{trih:eqn2}), we can deduce that
	\begin{align*}
		&\sum_{n=0}^\infty \bigl(1-q^{2n+1}\bigr)\frac{(q/a, q/c)_n (ac)^n}{(qa, qc; q)_n}
		\frac{\bigl(1-q^{n+1/2}\bigr)}{\bigl(1-q^{1/2}\bigr)} q^{-3n/2}{_3\phi_2}\biggl({{q^{-n}, q^{n+1}, q}\atop{q^{3/2}, -q^{3/2}}}; q, q\biggr)\\
		&\qquad =\frac{(q, ac; q)_\infty}{(qa, qc; q)_\infty}
		\sum_{n=0}^\infty \frac{\bigl(q/a, q/c, -q^{1/2}; q\bigr)_n}{\bigl(q, q^{1/2}, -q^{3/2}; q\bigr)_n}\biggl(\frac{ac}{q^{1/2}}\biggr)^n.
	\end{align*}
	Substituting (\ref{cw:eqn1}) into the left-hand side of the equation above and then replacing $q$ by $q^2$ and simplifying, we complete the proof of Theorem~\ref{liuoldthm}.
\end{proof}

If we specialize Theorem~\ref{liuoldthm} to the case when $c=0$, then we find that
\begin{align}
	&\frac{\bigl(q^2; q^2\bigr)_\infty}{\bigl(q^2a; q^2\bigr)_\infty}\sum_{n=0}^\infty
	\frac{\bigl(q^2/a; q^2\bigr)_n (-a)^n }{(q; q)_{2n} \bigl(1+q^{2n+1}\bigr)}q^{n^2} \nonumber\\
	&\qquad{}=\sum_{n=0}^\infty \sum_{j=-n}^n (-1)^n\bigl(1-q^{2n+1}\bigr) q^{2n^2+n-j^2-j}
	\frac{\bigl(q^2/a; q^2\bigr)_n a^n}{\bigl(q^2a; q^2\bigr)_n}.\label{liu2013:eqn2}
\end{align}
Setting $a=1$ in the equation above, we immediately arrive at the following beautiful identity of Rogers--Hecke type due to Wang and Chern \cite[equation~(2.9)]{WangChern2019}:
\begin{equation}\label{liu2013:eqn3}
	\sum_{n=0}^\infty \frac{(-1)^n q^{n^2}}{\bigl(q; q^2\bigr)_n \bigl(1+q^{2n+1}\bigr)}
	=\sum_{n=0}^\infty \sum_{j=-n}^n (-1)^n \bigl(1-q^{2n+1}\bigr) q^{2n^2+n-j^2-j}.
\end{equation}
When $a=-1$, (\ref{liu2013:eqn2}) becomes
\begin{equation*}
	\sum_{n=0}^\infty
	\frac{\bigl(-q^2; q^2\bigr)_n q^{n^2}}{(q; q)_{2n} \bigl(1+q^{2n+1}\bigr)}
	=\frac{\bigl(-q^2; q^2\bigr)_\infty}{\bigl(q^2; q^2\bigr)_\infty}
	\sum_{n=0}^\infty \sum_{j=-n}^n \bigl(1-q^{2n+1}\bigr) q^{2n^2+n-j^2-j}.
\end{equation*}
Taking $a=q$ in (\ref{liu2013:eqn2}), we find that
\begin{equation*}
	\sum_{n=0}^\infty (-1)^n \frac{q^{n^2+n}}{\bigl(1+q^{2n+1}\bigr)\bigl(q^2; q^2\bigr)_n}
	=\frac{\bigl(q; q^2\bigr)_\infty}{\bigl(q^2; q^2\bigr)_\infty}
	\sum_{n=0}^\infty \sum_{j=-n}^n q^{2n^2+2n-j^2-j}.
\end{equation*}

Letting $a=c=q$ in Theorem~\ref{liuoldthm} and simplifying, we conclude that
\begin{equation*}
	\sum_{n=0}^\infty\frac{\bigl(q; q^2\bigr)_n q^n}{\bigl(q^2; q^2\bigr)_n \bigl(1+q^{2n+1}\bigr)}
	=\frac{\bigl(q; q^2\bigr)_\infty^2}{\bigl(q^2; q^2\bigr)^2_\infty}
	\sum_{n=0}^\infty \sum_{j=-n}^n \frac{q^{n^2+2n-j^2-j}}{1-q^{2n+1}}.
\end{equation*}

If we specialize Theorem~\ref{liuoldthm} to the case when $a=1$, then we find that
\begin{align*} 
	\sum_{n=0}^\infty \frac{\bigl(q^2/c; q^2\bigr)_n (c/q)^n}{\bigl(q; q^2\bigr)_n \bigl(1+q^{2n+1}\bigr)}
	=\sum_{n=0}^\infty \sum_{j=-n}^n \bigl(1-q^{2n+1}\bigr) q^{n^2-j^2-j} \frac{\bigl(q^2/c; q^2\bigr)_n c^n}{\bigl(c; q^2\bigr)_{n+1}}.
\end{align*}
Equating the coefficients of $c$ on both sides of the above equation and then combining the resulting equation with (\ref{liu2013:eqn3}) and finally replacing $q$ by $-q$, we deduce that
\begin{align*}
\sum_{n=0}^{\infty} \frac{ q^{n^2-2n}}{\bigl(-q; q^2\bigr)_n \bigl(1-q^{2n+1}\bigr)}
=\sum_{n=0}^{\infty} \sum_{j=-n}^{n} q^{2n^2-n-j^2-j}\bigl(1+q^{2n+1}\bigr)\bigl(1-q^{2n}+q^{4n+2}\bigr).
\end{align*}
\subsection{More Rogers--Hecke type series. Part III}
Replacing $q$ by $q^2$ in (\ref{euler:eqn1}), we have
\begin{align*}
	&\sum_{n=0}^\infty\bigl(1-q^{4n+2}\bigr) \frac{\bigl(q^2/a, q^2/b, q^2/c; q^2\bigr)_n}{\bigl(q^2a, q^2b, q^2c; q^2\bigr)_n} \biggl(\frac{abc}{q^2}\biggr)^n {_3\phi_2}\biggl({{q^{-2n}, q^{2n+2}, \gamma}\atop{q^2/b, q^2\gamma t}}; q^2, q^2\biggr)\nonumber\\
	&\qquad{}=\frac{\bigl(q^2, ac; q^2\bigr)_\infty}{\bigl(q^2a, q^2c; q\bigr)_\infty}
	{_3\phi_2}\biggl({{q^2/a, q^2/c, q^2t}\atop{q^2b, q^2\gamma t}}; q^2, \frac{abc\gamma}{q^2}\biggr).
\end{align*}	
Choosing $b=-1$, $\gamma=-1$ and $t=1/q$ in the equation above, we conclude that
\begin{align*}
& \sum_{n=0}^\infty(1-q^{4n+2}) \frac{\bigl(q^2/a, q^2/c; q^2\bigr)_n}{\bigl(q^2a, q^2c; q^2\bigr)_n} \biggl(-\frac{ac}{q^2}\biggr)^n {_3\phi_2}\biggl({{q^{-2n}, q^{2n+2}, -1}\atop{-q, -q^2}}; q^2, q^2\biggr)\\
&\qquad{}	=\frac{\bigl(q^2, ac; q^2\bigr)_\infty}{\bigl(q^2a, q^2c; q\bigr)_\infty}
	{_3\phi_2}\biggl({{q^2/a, q^2/c, q}\atop{-q^2, -q}}; q^2, \frac{ac}{q^2}\biggr).\nonumber
\end{align*}	
Substituting (\ref{newvalue:eqn3}) into the left-hand side of the equation above, we get the following theorem.
\begin{Theorem} \label{gaussthm}We have
\begin{align*}
&\sum_{n=0}^\infty \sum_{j=-n}^n (-1)^j \bigl(1-q^{4n+2}\bigr)	q^{j^2-2n}
\frac{\bigl(q^2/a, q^2/c; q^2\bigr)_n (ac)^n}{\bigl(q^2a, q^2c; q^2\bigr)_n}\\
&\qquad{}=\frac{\bigl(q^2, ac; q^2\bigr)_\infty}{\bigl(q^2a, q^2c; q\bigr)_\infty}
\sum_{n=0}^\infty \frac{\bigl(q^2/a, q^2/c, q; q^2\bigr)_n}{(-q; q)_{2n} \bigl(q^2; q^2\bigr)_n}\biggl(\frac{ac}{q^2}\biggr)^n.
\nonumber
\end{align*}	
\end{Theorem}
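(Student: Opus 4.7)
The proof of Theorem~\ref{gaussthm} should follow the template used for Theorems~\ref{rogersheck:n1}, \ref{eulerthm}, and \ref{liuoldthm}: specialize the master identity~(\ref{rht:eqn2}) (which itself arises from Theorem~\ref{liuthmd} with $\alpha=q$, $u=at$, $\beta=q/a$), pass from base $q$ to base $q^2$, and choose parameters so that the terminating inner $_3\phi_2$ on the left can be collapsed by one of the closed-form evaluations derived in Section~\ref{sec2}.

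Concretely, I would rewrite (\ref{rht:eqn2}) with $q$ replaced by $q^2$ and then set $b=-1$, $\gamma=-1$, and $t=1/q$. Under this specialization the factors $\bigl(q^2/b; q^2\bigr)_n$ and $\bigl(q^2 b; q^2\bigr)_n$ cancel from the outer summand, the weight $(abc/q^2)^n$ becomes $(-ac/q^2)^n$, and the inner series reduces to
\[
{_3\phi_2}\biggl({{q^{-2n}, q^{2n+2}, -1}\atop{-q^2, -q}}; q^2, q^2\biggr),
\]
which is exactly the terminating series evaluated in (\ref{newvalue:eqn5}) as $(-1)^n \sum_{j=-n}^n (-1)^j q^{j^2}$. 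Substituting this evaluation and absorbing the sign $(-1)^n$ into $(-ac/q^2)^n$ yields precisely the double sum $\sum_{n,j}(-1)^j\bigl(1-q^{4n+2}\bigr)q^{j^2-2n}\bigl(q^2/a, q^2/c; q^2\bigr)_n(ac)^n/\bigl(q^2a, q^2c; q^2\bigr)_n$ displayed on the left-hand side of Theorem~\ref{gaussthm}.

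On the right-hand side, the same specialization converts the outer $_3\phi_2$ into
\[
{_3\phi_2}\biggl({{q^2/a, q^2/c, q}\atop{-q^2, -q}}; q^2, \frac{ac}{q^2}\biggr),
\]
multiplied by the prefactor $\bigl(q^2, ac; q^2\bigr)_\infty/\bigl(q^2 a, q^2 c; q^2\bigr)_\infty$. To match the stated form I would then apply the elementary identity $\bigl(-q^2; q^2\bigr)_n \bigl(-q; q^2\bigr)_n = (-q; q)_{2n}$, which fuses the two lower Pochhammer symbols in the summand into $(-q; q)_{2n}$ alongside the surviving $\bigl(q^2; q^2\bigr)_n$ coming from the standard base factor. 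This delivers the right-hand side of Theorem~\ref{gaussthm} verbatim.

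No substantive obstacle is anticipated: the substitutions $b=-1$, $\gamma=-1$, $t=1/q$ keep every parameter finite, the key evaluation~(\ref{newvalue:eqn5}) has already been established, and the remaining work amounts to careful bookkeeping of signs, powers of $q$, and Pochhammer cancellations, an exercise entirely parallel to the proofs given for Theorems~\ref{rogersheck:n1}, \ref{eulerthm}, and \ref{liuoldthm}. The only point that requires a touch of attention is tracking that the combined exponent $j^2-2n$ on the left is assembled correctly from $q^{j^2}$ and the factor $(ac/q^2)^n$, and that the sign $(-1)^n$ from (\ref{newvalue:eqn5}) combines with $(-1)^n$ from $(-ac/q^2)^n$ to leave only $(-1)^j$ in the final summand.
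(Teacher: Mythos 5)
Your proposal is correct and follows essentially the same route as the paper: the paper also takes the $q\to q^2$ version of (\ref{rht:eqn2}) with $b=-1$, $\gamma=-1$, $t=1/q$, evaluates the inner terminating $_3\phi_2$ by the identity $(-1)^n\sum_{j=-n}^n(-1)^jq^{j^2}$ (your citation of (\ref{newvalue:eqn5}) is the direct match; the paper points to the equivalent (\ref{newvalue:eqn3})), and fuses $\bigl(-q;q^2\bigr)_n\bigl(-q^2;q^2\bigr)_n=(-q;q)_{2n}$ on the right. The only discrepancy is the base in the prefactor denominator, where your $\bigl(q^2a,q^2c;q^2\bigr)_\infty$ is what the substitution actually produces, the stated $\bigl(q^2a,q^2c;q\bigr)_\infty$ being a misprint in the theorem.
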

Letting $a=c=0$ in Theorem~\ref{gaussthm} and simplifying, we conclude that
\begin{equation*}
\sum_{n=0}^\infty \frac{\bigl(q; q^2\bigr)_n }{\bigl(q^2; q^2\bigr)_n (-q; q)_{2n}}q^{2n^2}
=\frac{1}{\bigl(q^2; q^2\bigr)_\infty}
\sum_{n=0}^\infty \sum_{j=-n}^n (-1)^j \bigl(1-q^{4n+2}\bigr) q^{j^2+2n^2}.
\end{equation*}
 If we specialize Theorem~\ref{gaussthm} to the case when $a=1$, then we find that
 \begin{align}\label{euler:eqn11}
 	\sum_{n=0}^\infty \sum_{j=-n}^n (-1)^j \bigl(1-q^{4n+2}\bigr)	q^{j^2-2n}
 	\frac{\bigl(q^2/c; q^2\bigr)_n c^n}{\bigl(c; q^2\bigr)_{n+1}}
 	=\sum_{n=0}^\infty \frac{\bigl(q^2/c, q; q^2\bigr)_n}{(-q; q)_{2n}}\biggl(\frac{c}{q^2}\biggr)^n.
 \end{align}
Letting $c=0$ in the equation above, we immediately find that
\begin{equation}\label{euler:eqn12}
\sum_{n=0}^\infty (-1)^n \frac{\bigl(q; q^2\bigr)_n}{(-q; q)_{2n}} q^{n^2-n}
= \sum_{n=0}^\infty \sum_{j=-n}^n (-1)^{j+n} \bigl(1-q^{4n+2}\bigr)	q^{j^2+n^2-n}.
\end{equation}	
Applying the $q$-derivative operator $\partial_{q^2, c}$ to both sides of $(\ref{euler:eqn11})$ and simplifying, we conclude that
\begin{align}\label{euler:eqn13}
 &\sum_{n=0}^\infty \frac{\bigl(1-q^{2n}\bigr)\bigl(q; q^2\bigr)_n\bigl(q^2/c; q^2\bigr)_{n-1}c^{n-1}q^{-2n}}{(-q; q)_{2n}}\\
 &\quad{}=\sum_{n=0}^\infty \sum_{j=-n}^n (-1)^j \bigl(1-q^{4n+2}\bigr) \bigl(1+(c-2)q^{2n}-cq^{2n+2}+q^{4n+2}\bigr)	q^{j^2-2n}
 	\frac{\bigl(q^2/c; q^2\bigr)_{n-1} c^{n-1}}{\bigl(c; q^2\bigr)_{n+2}}. \nonumber
 \end{align}
This equation can also be used to derive identities for Rogers--Hecke type series. Putting $c=0$ in (\ref{euler:eqn13}), we find that
\begin{align}
&\sum_{n=0}^{\infty} (-1)^n \frac{\bigl(1-q^{2n}\bigr)\bigl(q; q^2\bigr)_n }{(-q; q)_{2n}} q^{n^2-3n}\nonumber\\
&\qquad{}=\sum_{n=0}^\infty \sum_{j=-n}^n (-1)^{n+j} \bigl(1-q^{4n+2}\bigr) \bigl(1-2q^{2n}+q^{4n+2}\bigr)	q^{n^2+j^2-3n}.\label{euler:eqn14}
\end{align}
By adding (\ref{euler:eqn12}) and (\ref{euler:eqn14}) together, we conclude that
\begin{equation*}
\sum_{n=0}^{\infty} (-1)^n \frac{\bigl(q; q^2\bigr)_n }{(-q; q)_{2n}} q^{n^2-3n}
=\sum_{n=0}^\infty \sum_{j=-n}^n (-1)^{n+j} \bigl(1-q^{4n+2}\bigr) \bigl(1-q^{2n}+q^{4n+2}\bigr)	q^{n^2+j^2-3n}.
\end{equation*}

\begin{Theorem}\label{Gaussthm} We have
\begin{align*} 
&\frac{\bigl(q^2, ac, c/q, qa; q^2\bigr)_\infty}{\bigl(q^2a, q, q^2c, ac/q; q^2\bigr)_\infty}
\sum_{n=0}^\infty \frac{\bigl(q^2/c; q^2\bigr)_n (c/q)^n}{\bigl(qa; q^2\bigr)_n}\\
&\qquad{}=\sum_{n=0}^\infty \sum_{j=0}^{2n} \bigl(1-q^{4n+2}\bigr) q^{2n^2-j(j+1)/2}
\frac{\bigl(q^2/a, q^2/c; q^2\bigr)_n}{\bigl(q^2a, q^2c; q^2\bigr)_n} (ac)^n.\nonumber
\end{align*}	
\end{Theorem}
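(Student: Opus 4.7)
The plan is to derive Theorem~\ref{Gaussthm} as an immediate consequence of Theorem~\ref{trianHecke} by applying Heine's transformation to the single sum on the right-hand side of~(\ref{trinumbers: eqn1}).

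First, I would note that the right-hand side of Theorem~\ref{Gaussthm} is identical to the left-hand side of~(\ref{trinumbers: eqn1}), so by Theorem~\ref{trianHecke} this double sum equals
\[
\frac{\bigl(q^2, ac; q^2\bigr)_\infty}{\bigl(q^2 a, q^2 c; q^2\bigr)_\infty}
\sum_{n=0}^\infty \frac{\bigl(q^2/a, q^2/c; q^2\bigr)_n}{(q; q)_{2n}} \biggl(\frac{ac}{q}\biggr)^n.
\]
Using the factorization $(q; q)_{2n}=\bigl(q;q^2\bigr)_n\bigl(q^2;q^2\bigr)_n$, the inner sum is recognized as the base-$q^2$ hypergeometric series ${_2\phi_1}\bigl(q^2/a,\,q^2/c;\,q;\,q^2,\,ac/q\bigr)$.

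Next, I would apply Heine's transformation in the form
\[
{_2\phi_1}\biggl({{A, B}\atop{C}}; q, z\biggr)
=\frac{(C/B, Bz; q)_\infty}{(C, z; q)_\infty}\,
{_2\phi_1}\biggl({{ABz/C, B}\atop{Bz}}; q, \frac{C}{B}\biggr),
\]
taken in base $q^2$ with $A=q^2/a$, $B=q^2/c$, $C=q$ and $z=ac/q$. The quantities $C/B=c/q$, $Bz=qa$ and $ABz/C=q^2$ follow by direct computation, and the resulting ${_2\phi_1}$ on the right collapses because the numerator factor $\bigl(q^2;q^2\bigr)_n$ cancels the canonical denominator of the series, leaving $\sum_{n\ge 0}\bigl(q^2/c;q^2\bigr)_n(c/q)^n/\bigl(qa;q^2\bigr)_n$. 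Multiplying the two infinite-product prefactors yields exactly the left-hand side of Theorem~\ref{Gaussthm}.

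The argument is essentially mechanical. The only subtle point is that one must select the particular Heine transformation above (rather than either of the two other standard variants, or equivalently, rather than swapping the roles of $A=q^2/a$ and $B=q^2/c$) in order to generate the infinite product $\bigl(c/q, qa; q^2\bigr)_\infty$ that the target formula requires.
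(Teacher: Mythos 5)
Your proposal is correct, but it takes a genuinely different route from the paper. The paper proves Theorem~\ref{Gaussthm} by going back to the master transformation Theorem~\ref{liuthmd}: it sets $\alpha=q$, $\gamma=q/a$, $u=at$, replaces $q$ by $q^2$, specializes $(b,\beta,t)=\bigl(q^{-1},q^2,0\bigr)$, and then evaluates the terminating ${}_3\phi_2$ on the left-hand side via (\ref{spvalue:eqn7}). You instead recycle Theorem~\ref{trianHecke}, whose left-hand side in (\ref{trinumbers: eqn1}) is exactly the double sum of Theorem~\ref{Gaussthm}, and transform its single-sum side by the second Heine transformation (Gasper--Rahman (1.4.2)) in base $q^2$; your parameter bookkeeping $C/B=c/q$, $Bz=qa$, $ABz/C=q^2$ is correct, the numerator factor $\bigl(q^2;q^2\bigr)_n$ does cancel the canonical denominator, and the product prefactors multiply out to the left-hand side of the theorem. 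There is no circularity, since Theorem~\ref{trianHecke} is established in Section~\ref{sec5} independently of Theorem~\ref{Gaussthm}. In effect, the two theorems share the same double-sum side, and their single-sum sides are Heine transforms of one another; your argument makes that link explicit and is shorter given that Theorem~\ref{trianHecke} is already available, whereas the paper's derivation keeps Theorem~\ref{Gaussthm} self-contained within its systematic scheme of specializing Theorem~\ref{liuthmd} plus a terminating ${}_3\phi_2$ evaluation. The only point worth stating explicitly in your write-up is the convergence hypotheses $|ac/q|<1$ and $|c/q|<1$ required by Heine's transformation (the paper is equally silent about such conditions in the theorem statement).
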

\begin{proof}
Taking $\alpha=q$, $\gamma=q/a$, $u=at$ in Theorem~\ref{liuthmd} and replacing $q$ by $q^2$, we deduce that
 \begin{align*}
 	&\sum_{n=0}^\infty\bigl(1-q^{4n+2}\bigr) \frac{\bigl(q^2/a, q^2/b, q^2/c; q\bigr)_n}{\bigl(q^2a, q^2b, q^2c; q^2\bigr)_n} \biggl(\frac{abc}{q^2}\biggr)^n {_3\phi_2}\biggl({{q^{-2n}, q^{2n+2}, \beta}\atop{q^2/b, q^2\beta t}}; q^2, q^2\biggr)\\
&\qquad{} 	=\frac{\bigl(q^2, ac, bc, \beta ab; q^2\bigr)_\infty}{\bigl(q^2a, q^2b, q^2c, \beta abc/q^2; q^2\bigr)_\infty}
 	{_3\phi_2}\biggl({{q^2/c, \beta, \beta at}\atop{\beta ab, q^2\beta t}}; q^2, bc\biggr).\nonumber
 \end{align*}	
 Making the change $(b, \beta, t)$ to $\bigl(q^{-1}, q^2, 0\bigr)$ in the equation above, we obtain
\begin{align*}
	&\sum_{n=0}^\infty\bigl(1-q^{4n+2}\bigr) \frac{\bigl(q^2/a, q^3, q^2/c; q\bigr)_n}{\bigl(q^2a, q, q^2c; q^2\bigr)_n} \biggl(\frac{ac}{q^3}\biggr)^n {_3\phi_2}\biggl({{q^{-2n}, q^{2n+2}, q^2}\atop{q^3, 0}}; q^2, q^2\biggr)\\
&\qquad{}	=\frac{\bigl(q^2, ac, c/q, qa; q^2\bigr)_\infty}{\bigl(q^2a, q, q^2c, ac/q; q\bigr)_\infty}
	{_2\phi_1}\biggl({{q^2/c, q^2}\atop{qa}}; q^2, \frac{c}{q}\biggr). \nonumber
\end{align*}	
Substituting (\ref{spvalue:eqn7}) into the equation above and simplifying, we complete the proof of Theorem~\ref{Gaussthm}.
\end{proof}

If we let $c=0$ in Theorem~\ref{Gaussthm}, then we are led to the $q$-identity{\samepage
\begin{align}
&\frac{\bigl(q^2, qa; q^2\bigr)_\infty}{\bigl(q, q^2a; q^2\bigr)_\infty}
\sum_{n=0}^\infty (-1)^n\frac{q^{n^2}}{\bigl(qa; q^2\bigr)_n}\nonumber\\
&\qquad{}=\sum_{n=0}^\infty \sum_{j=0}^{2n} (-1)^n \bigl(1-q^{4n+2}\bigr) q^{3n^2+n-j(j+1)/2}
\frac{\bigl(q^2/a; q^2\bigr)_n a^n}{\bigl(q^2a; q^2\bigr)_n}.\label{Gauss:eqn3}
\end{align}
When $a=1,$ the above equation reduces to (\ref{trinumbers:eqn3}).}

Letting $a=q$ and $a=-q$ respectively in (\ref{Gauss:eqn3}), we conclude that
\begin{equation*}
\sum_{n=0}^\infty (-1)^n\frac{q^{n^2}}{\bigl(q^2; q^2\bigr)_n}
=\frac{\bigl(q; q^2\bigr)_\infty^2}{\bigl(q^2; q^2\bigr)^2_\infty}
\sum_{n=0}^\infty \sum_{j=0}^{2n} (-1)^n \bigl(1+q^{2n+1}\bigr) q^{3n^2+2n-j(j+1)/2}
\end{equation*}
and
\begin{equation*}
	\sum_{n=0}^\infty (-1)^n\frac{q^{n^2}}{\bigl(-q^2; q^2\bigr)_n}
	=\frac{\bigl(q^2; q^4\bigr)_\infty }{\bigl(q^4; q^4\bigr)_\infty}
	\sum_{n=0}^\infty \sum_{j=0}^{2n} \bigl(1-q^{2n+1}\bigr) q^{3n^2+2n-j(j+1)/2}.
\end{equation*}

Putting $a=-1$ and $a=0$ respectively in (\ref{Gauss:eqn3}), we are led to the following two identities:
\begin{align*}
\sum_{n=0}^\infty (-1)^n\frac{q^{n^2}}{\bigl(-q; q^2\bigr)_n}
=\frac{\bigl(q, -q^2; q^2\bigr)_\infty }{\bigl(q^2, -q; q^2\bigr)_\infty}
\sum_{n=0}^\infty \sum_{j=0}^{2n} \bigl(1-q^{4n+2}\bigr) q^{3n^2+n-j(j+1)/2}
\end{align*}
and
\begin{equation}\label{Gauss:eqn8}
\sum_{n=0}^\infty (-1)^n q^{n^2}
=\frac{\bigl(q; q^2\bigr)_\infty}{\bigl(q^2; q^2\bigr)_\infty}
\sum_{n=0}^\infty \sum_{j=0}^{2n} \bigl(1-q^{4n+2}\bigr) q^{4n^2+2n-j(j+1)/2}.
\end{equation}

Replacing $q$ by $-q$ in (\ref{Gauss:eqn8}), we find that
\begin{equation*}
	\sum_{n=0}^\infty q^{n^2}
	=\frac{\bigl(-q; q^2\bigr)_\infty}{\bigl(q^2; q^2\bigr)_\infty}
	\sum_{n=0}^\infty \sum_{j=0}^{2n} (-1)^{j(j+1)/2}\bigl(1-q^{4n+2}\bigr) q^{4n^2+2n-j(j+1)/2},
\end{equation*}
which is different from the following identity \cite[Proposition~6.11]{Liu2013RamanJ}:
\begin{equation*}
\sum_{n=0}^\infty q^{n^2}
=\frac{1}{(q; q)_\infty}
\sum_{n=0}^\infty \sum_{j=-n}^{n} (-1)^{j}\bigl(1-q^{4n+2}\bigr) q^{3n^2+n-j(3j+1)/2}.
\end{equation*}

Clearly, we have not exhausted the application of Theorems~\ref{liuthmc} and \ref{liuthmd} in obtaining Rogers--Hecke type series. For further applications,
we need more terminating ${}_3\phi_2$ series identities.

Theorem~\ref{liuthm} clearly contains infinitely many $q$-formulas since we can choose $f$ in many different ways. For example, if we take
\[
f(x)=\prod_{j=1}^m \frac{(b_j x/q; q)_\infty} {(c_j x/q; q)_\infty}
\]
in Theorem~\ref{liuthm}, then we are led to the following theorem.
\begin{Theorem}
For $\{|\alpha a c_1/q|, |\alpha b_1|, \ldots, |\alpha a c_m/q|, |\alpha b_m|\}<1$, we have
\begin{gather*}
\prod_{j=1}^m \frac{(\alpha a b_j/q, \alpha c_j; q)_\infty}
{(\alpha a c_j/q, \alpha b_j; q)_\infty}
\\
\qquad{}=\sum_{n=0}^\infty \frac{(1-\alpha q^{2n})(\alpha, q/a; q)_n (a/q)^n}{(1-\alpha)(q, \alpha,\alpha a; q)_n}
{_{m+2}\phi_{m+1}}\biggl({{q^{-n}, \alpha q^n, \alpha c_1, \ldots, \alpha c_m}\atop{q\alpha, \alpha b_1, \ldots, \alpha b_m}}; q, q\biggr).
\end{gather*}
\end{Theorem}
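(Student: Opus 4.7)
The plan is to apply Theorem~\ref{liuthm} directly to the analytic function
\[
f(x)=\prod_{j=1}^m \frac{(b_j x/q;q)_\infty}{(c_j x/q;q)_\infty},
\]
exactly as the paper suggests immediately before the statement. The hypotheses $|\alpha ac_j/q|<1$ and $|\alpha b_j|<1$ (and the analogous bounds appearing with other choices of evaluation point) guarantee that every $q$-shifted factorial written below is defined and that $f$ is analytic in a neighborhood of $x=0$, so Theorem~\ref{liuthm} applies.

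First I would evaluate $f$ at the two points required by Theorem~\ref{liuthm}. Plugging in $x=\alpha a$ gives
\[
f(\alpha a)=\prod_{j=1}^m \frac{(\alpha ab_j/q;q)_\infty}{(\alpha ac_j/q;q)_\infty},
\]
which is precisely the $b_j$-numerator, $c_j$-denominator pair appearing in the target left-hand side. Plugging in $x=\alpha q^{k+1}$ and using the basic splitting $(z;q)_\infty=(z;q)_k\,(zq^k;q)_\infty$ separates the $k$-dependence:
\[
f(\alpha q^{k+1})=\prod_{j=1}^m \frac{(\alpha b_j;q)_\infty}{(\alpha c_j;q)_\infty}\cdot\prod_{j=1}^m \frac{(\alpha c_j;q)_k}{(\alpha b_j;q)_k}.
\]
The first product is independent of $k$ and can be pulled outside the inner sum.

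Next I would substitute these two evaluations into the expansion formula \eqref{newliuexpthm}. The inner $k$-sum then takes the form
\[
\prod_{j=1}^m \frac{(\alpha b_j;q)_\infty}{(\alpha c_j;q)_\infty}\sum_{k=0}^n \frac{(q^{-n},\alpha q^n,\alpha c_1,\ldots,\alpha c_m;q)_k}{(q,q\alpha,\alpha b_1,\ldots,\alpha b_m;q)_k}\,q^k,
\]
and the finite sum is, by the very definition of ${_r\phi_s}$, the terminating ${_{m+2}\phi_{m+1}}$ series with upper parameters $q^{-n},\alpha q^n,\alpha c_1,\ldots,\alpha c_m$, lower parameters $q\alpha,\alpha b_1,\ldots,\alpha b_m$, and argument $q$. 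Finally, dividing both sides of the resulting identity by the $k$-independent prefactor $\prod_{j=1}^m (\alpha b_j;q)_\infty/(\alpha c_j;q)_\infty$ and absorbing it into $f(\alpha a)$ on the left yields exactly the quotient $\prod_{j=1}^m (\alpha ab_j/q,\alpha c_j;q)_\infty/(\alpha ac_j/q,\alpha b_j;q)_\infty$ claimed in the theorem.

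There is no real analytic obstacle here; the whole argument is a one-step specialization of Theorem~\ref{liuthm}. The only point needing a moment's care is the bookkeeping of the $q$-shifted factorial splittings: verifying that after the decomposition $(z;q)_\infty=(z;q)_k\,(zq^k;q)_\infty$ and the subsequent division, the $\alpha c_j$ factors end up in the \emph{upper} parameters and the $\alpha b_j$ factors in the \emph{lower} parameters of the resulting ${_{m+2}\phi_{m+1}}$, matching the orientation demanded by the theorem's statement.
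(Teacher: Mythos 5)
Your proof is correct and is exactly the paper's own (implicit) argument: the paper gives no detail beyond substituting this $f(x)$ into Theorem~\ref{liuthm}, and your evaluations $f(\alpha a)=\prod_{j=1}^m(\alpha ab_j/q;q)_\infty/(\alpha ac_j/q;q)_\infty$ and $f\bigl(\alpha q^{k+1}\bigr)=\prod_{j=1}^m\frac{(\alpha b_j;q)_\infty(\alpha c_j;q)_k}{(\alpha c_j;q)_\infty(\alpha b_j;q)_k}$, followed by dividing out the $k$-independent product, are precisely the required steps. One bookkeeping remark: your derivation yields the coefficient $\frac{(1-\alpha q^{2n})(\alpha,q/a;q)_n(a/q)^n}{(1-\alpha)(q,\alpha a;q)_n}$, so the extra ``$\alpha$'' in the printed denominator $(q,\alpha,\alpha a;q)_n$ is a misprint (with it, the $(\alpha;q)_n$ factors would cancel and, for instance, the $m=2$ specialization $b_1=b$, $c_1=q$, $b_2=c$, $c_2=bc/q$ would no longer reproduce the Rogers ${}_6\phi_5$ summation of Proposition~\ref{rogersthm}); the formula you obtain is the correct version.
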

We believe that the application of Theorem~\ref{liuthm} in $q$-series and number theory is still worth exploring.

\subsection*{Acknowledgments}
I am grateful to the referees and the editors for proposing many very helpful comments and suggestions. I also thank Dandan Chen, Chun Wang and Chang Xu for pointing out several misprints of an earlier version of this paper. This research is supported in part by the National Natural Science Foundation of China (Grant No.~12371328) and Science and Technology Commission of Shanghai Municipality (No.~22DZ2229014).

\pdfbookmark[1]{References}{ref}
\LastPageEnding


\begin{thebibliography}{99}
\footnotesize\itemsep=0pt

\bibitem{Andrews1967}
Andrews G.E., Enumerative proofs of certain {$q$}-identities, \href{https://doi.org/10.1017/S0017089500000057}{\textit{Glasgow
 Math.~J.}} \textbf{8} (1967), 33--40.

\bibitem{Andrews74}
Andrews G.E., Applications of basic hypergeometric functions, \href{https://doi.org/10.1137/1016081}{\textit{SIAM
 Rev.}} \textbf{16} (1974), 441--484.

\bibitem{Andrews86}
Andrews G.E., The fifth and seventh order mock theta functions, \href{https://doi.org/10.2307/2000275}{\textit{Trans.
 Amer. Math. Soc.}} \textbf{293} (1986), 113--134.

\bibitem{Andrews2012}
Andrews G.E., {$q$}-orthogonal polynomials, {R}ogers--{R}amanujan identities,
 and mock theta functions, \href{https://doi.org/10.1134/S0081543812010038}{\textit{Proc. Steklov Inst. Math.}} \textbf{276}
 (2012), 21--32.

\bibitem{AAR1999}
Andrews G.E., Askey R., Roy R., Special functions, \textit{Encyclopedia Math.
 Appl.}, Vol.~71, \href{https://doi.org/10.1017/CBO9781107325937}{Cambridge University Press}, Cambridge, 1999.

\bibitem{AndrewsBerndt2005}
Andrews G.E., Berndt B.C., Ramanujan's lost notebook. {P}art~{I}, \href{https://doi.org/10.1007/0-387-28124-X}{Springer}, New
 York, 2005.

\bibitem{ADH1988}
Andrews G.E., Dyson F.J., Hickerson D., Partitions and indefinite quadratic
 forms, \href{https://doi.org/10.1007/BF01388778}{\textit{Invent. Math.}} \textbf{91} (1988), 391--407.

\bibitem{AslanIsmail}
Aslan H., Ismail M.E.H., A~{$q$}-translation approach to {L}iu's calculus,
 \href{https://doi.org/10.1007/s00026-019-00450-x}{\textit{Ann. Comb.}} \textbf{23} (2019), 465--488.

\bibitem{ChenWang2020}
Chen D., Wang L., Representations of mock theta functions, \href{https://doi.org/10.1016/j.aim.2020.107037}{\textit{Adv. Math.}}
 \textbf{365} (2020), 107037, 72~pages, \href{https://arxiv.org/abs/1811.07686}{arXiv:1811.07686}.

\bibitem{Gas+Rah}
Gasper G., Rahman M., Basic hypergeometric series, 2nd ed., \textit{Encyclopedia Math.
 Appl.}, Vol.~96, \href{https://doi.org/10.1017/CBO9780511526251}{Cambridge University Press}, Cambridge, 2004.

\bibitem{Jackson1908}
Jackson F., On $q$-functions and a~certain difference operator, \href{https://doi.org/10.1017/S0080456800002751}{\textit{Trans.
 Roy. Soc. Edin.}} \textbf{46} (1909), 253--281.

\bibitem{Krammer1993}
Krammer D., Sums of three squares and {$q$}-series, \href{https://doi.org/10.1006/jnth.1993.1050}{\textit{J.~Number Theory}}
 \textbf{44} (1993), 244--254.

\bibitem{Liu2002}
Liu Z.-G., An expansion formula for {$q$}-series and applications,
 \href{https://doi.org/10.1023/A:1021306016666}{\textit{Ramanujan~J.}} \textbf{6} (2002), 429--447.

\bibitem{Liu2003}
Liu Z.-G., Some operator identities and {$q$}-series transformation formulas,
 \href{https://doi.org/10.1016/S0012-365X(02)00626-X}{\textit{Discrete Math.}} \textbf{265} (2003), 119--139.

\bibitem{Liu2010pacific}
Liu Z.-G., An extension of the quintuple product identity and its applications,
 \href{https://doi.org/10.2140/pjm.2010.246.345}{\textit{Pacific~J.~Math.}} \textbf{246} (2010), 345--390.

\bibitem{Liu2010}
Liu Z.-G., Two {$q$}-difference equations and {$q$}-operator identities,
 \href{https://doi.org/10.1080/10236190902810385}{\textit{J.~Difference Equ. Appl.}} \textbf{16} (2010), 1293--1307.

\bibitem{Liu2011}
Liu Z.-G., An extension of the non-terminating {${}_6\phi_5$} summation and the
 {A}skey--{W}ilson polynomials, \href{https://doi.org/10.1080/10236190903530735}{\textit{J.~Difference Equ. Appl.}} \textbf{17}
 (2011), 1401--1411.

\bibitem{Liu2013RamanJ}
Liu Z.-G., A~{$q$}-series expansion formula and the {A}skey--{W}ilson
 polynomials, \href{https://doi.org/10.1007/s11139-012-9450-2}{\textit{Ramanujan~J.}} \textbf{30} (2013), 193--210.

\bibitem{Liu2013IJTN}
Liu Z.-G., On the {$q$}-derivative and {$q$}-series expansions,
 \href{https://doi.org/10.1142/S1793042113500759}{\textit{Int.~J.~Number Theory}} \textbf{9} (2013), 2069--2089,
 \href{https://arxiv.org/abs/1805.04618}{arXiv:1805.04618}.

\bibitem{LiuRam2013}
Liu Z.-G., On the {$q$}-partial differential equations and {$q$}-series, in The
 Legacy of {S}rinivasa {R}amanujan, \textit{Ramanujan Math. Soc. Lect. Notes
 Ser.}, Vol.~20, Ramanujan Mathematical Society, Mysore, 2013, 213--250,
 \href{https://arxiv.org/abs/1805.02132}{arXiv:1805.02132}.

\bibitem{LiuHahn2015}
Liu Z.-G., A~{$q$}-extension of a partial differential equation and the {H}ahn
 polynomials, \href{https://doi.org/10.1007/s11139-014-9632-1}{\textit{Ramanujan~J.}} \textbf{38} (2015), 481--501,
 \href{https://arxiv.org/abs/1805.07292}{arXiv:1805.07292}.

\bibitem{Liu2017}
Liu Z.-G., On a~system of {$q$}-partial differential equations with applications
 to {$q$}-series, in Analytic Number Theory, Modular Forms and
 {$q$}-hypergeometric Series, \textit{Springer Proc. Math. Stat.}, Vol. 221,
 \href{https://doi.org/10.1007/978-3-319-68376-8_2}{Springer}, Cham, 2017, 445--461, \href{https://arxiv.org/abs/1709.06784}{arXiv:1709.06784}.

\bibitem{Liu2019}
Liu Z.-G., {A}skey--{W}ilson polynomials and a~double {$q$}-series
 transformation formula with twelve parameters, \href{https://doi.org/10.1090/proc/14411}{\textit{Proc. Amer. Math.
 Soc.}} \textbf{147} (2019), 2349--2363, \href{https://arxiv.org/abs/1810.02918}{arXiv:1810.02918}.

\bibitem{Liu2020HRjournal}
Liu Z.-G., A~universal identity for theta functions of degree eight and
 applications, \href{https://doi.org/10.46298/hrj.2021.7432}{\textit{Hardy-Ramanujan~J.}} \textbf{43} (2020), 129--172,
 \href{https://arxiv.org/abs/2104.14705}{arXiv:2104.14705}.

\bibitem{Liu2021}
Liu Z.-G., On the {A}skey--{W}ilson polynomials and a~{$q$}-beta integral,
 \href{https://doi.org/10.1090/proc/15584}{\textit{Proc. Amer. Math. Soc.}} \textbf{149} (2021), 4639--4648.

\bibitem{Liu2023ActaSci}
Liu Z.-G., A~multiple {$q$}-exponential differential operational identity,
 \href{https://doi.org/10.1007/s10473-023-0608-3}{\textit{Acta Math. Sci. Ser.~B (Engl. Ed.)}} \textbf{43} (2023), 2449--2470.

\bibitem{Liu2023Acta}
Liu Z.-G., A~multiple {$q$}-translation formula and its implications,
 \href{https://doi.org/10.1007/s10114-023-2237-0}{\textit{Acta Math. Sin. (Engl. Ser.)}} \textbf{39} (2023), 2338--2363.

\bibitem{Schendel}
Schendel L., Zur {T}heorie der {F}unctionen, \href{https://doi.org/10.1515/crelle-1878-18788406}{\textit{J.~Reine Angew. Math.}}
 \textbf{84} (1878), 80--84.

\bibitem{Shanks1951}
Shanks D., A~short proof of an identity of {E}uler, \href{https://doi.org/10.2307/2032076}{\textit{Proc. Amer. Math.
 Soc.}} \textbf{2} (1951), 747--749.

\bibitem{Shanks1958}
Shanks D., Two theorems of {G}auss, \href{https://doi.org/10.2140/pjm.1958.8.609}{\textit{Pacific~J.~Math.}} \textbf{8}
 (1958), 609--612.

\bibitem{WangChern2019}
Wang C., Chern S., Some {$q$}-transformation formulas and {H}ecke type
 identities, \href{https://doi.org/10.1142/S1793042119500751}{\textit{Int.~J.~Number Theory}} \textbf{15} (2019), 1349--1367.

\bibitem{WangYee2019}
Wang L., Yee A.J., Some {H}ecke--{R}ogers type identities, \href{https://doi.org/10.1016/j.aim.2019.04.015}{\textit{Adv. Math.}}
 \textbf{349} (2019), 733--748.

\end{thebibliography}
\end{document}